\title{The orbifold topological vertex}
\author[J. Bryan]{Jim Bryan}
\email{jbryan@math.ubc.ca}
\author[C. Cadman]{Charles Cadman}
\email{mythirty@gmail.com}
\author[B. Young]{Ben Young}
\email{byoung@math.mcgill.ca,benyoung@msri.org}
\address{
Department of Mathematics\\
University of British Columbia \\
Room 121, 1984 Mathematics Road  \\
Vancouver, B.C., Canada V6T 1Z2
}
\newtheorem{thm}{Theorem}
\newtheorem{theorem}{Theorem}
\newtheorem{prop}[thm]{Proposition}
\newtheorem{conj}[thm]{Conjecture}
\newtheorem{lemma}[theorem]{Lemma}
\newtheorem{corollary}[theorem]{Corollary}
\theoremstyle{definition}
\newtheorem{def-thm}[thm]{Definition-Theorem}
\newtheorem{remark}[theorem]{Remark}
\newtheorem{defn}[theorem]{Definition}
\newtheorem{exmpl}[theorem]{Example}
\newcommand{\cnums} {{\mathbb C}}          
\newcommand{\nnums} {{\mathbb N}}		
\newcommand{\znums} {{\mathbb Z}}		
\newcommand{\qnums} {{\mathbb Q}}		
\newcommand{\Hom}{\operatorname{Hom}}
\newcommand{\Hilb}{\operatorname{Hilb}}
\newcommand{\Edges}{\mathsf{Edges}}
\newcommand{\Vertices}{\mathsf{Vertices}}
\newcommand{\Esf}{\mathsf{E}}
\newcommand{\Vsf}{\mathsf{V}}
\newcommand{\Nsf}{\mathsf{N}}
\newcommand{\Ssf}{\mathsf{S}}
\newcommand{\SEsf}{\mathsf{SE}}
\newcommand{\SVsf}{\mathsf{SV}}
\newcommand{\CY}{Ca\-la\-bi-Yau }
\newcommand{\DT}{DT }
\newcommand{\GW}{GW }
\newcommand{\CYthree}{CY3 }
\newcommand{\CYthrees}{CY3s }
\newcommand{\Ext}{\operatorname{Ext}}
\newcommand{\cExt}{\mathcal{E}xt}
\newcommand{\cHom}{\mathcal{H}om}
\newcommand{\X}{\mathcal{X}}
\newcommand{\pt}{\mathrm{pt}}
\renewcommand{\P}{\mathbb{P}}
\renewcommand{\O}{\mathcal{O}}
\newcommand{\football}{\P^{1} _{k_{0},k_{\infty }}}
\newcommand{\cC}{\mathcal{C}}
\newcommand{\cF}{\mathcal{F}}
\newcommand{\cG}{\mathcal{G}}
\renewcommand{\hat}{\widehat}
\renewcommand{\tilde}{\widetilde}
\newcommand{\QED}{\qed \bigskip}
\newcommand{\coeff}{\operatorname{Coeff}}
\newcommand{\Tot}{\operatorname{Tot}}
\newcommand{\frakq}{\mathfrak{q}}
\newcommand{\spaceship}[1]{\underset{#1}{\prec \negthinspace \succ}}
\newcommand{\Mon}{\mathsf{Mon}}
\newcommand{\Hook}{\mathsf{Hook}}
\newcommand{\Vempty}{\Vsf ^{n}_{\emptyset \emptyset \emptyset }}
\newcommand{\restricts}[1]{\bigg|_{#1}}
\begin{document}

\begin{abstract}
We define Donaldson-Thomas invariants of Calabi-Yau orbifolds and we
develop a topological vertex formalism for computing them. The basic
combinatorial object is the orbifold vertex $\Vsf ^{G}_{\lambda \mu
\nu }$, a generating function for the number of 3D partitions
asymptotic to 2D partitions $\lambda ,\mu ,\nu $ and colored by
representations of a finite Abelian group $G$ acting on $\cnums
^{3}$. In the case where $G\cong \znums _{n}$ acting on $\cnums ^{3}$
with transverse $A_{n-1}$ quotient singularities, we give an explicit
formula for $\Vsf ^{G}_{\lambda \mu \nu }$ in terms of Schur
functions. We discuss applications of our formalism to the
Donaldson-Thomas Crepant Resolution Conjecture and to the orbifold
Donaldson-Thomas/Gromov-Witten correspondence. We also explicitly
compute the Donaldson-Thomas partition function for some simple
orbifold geometries: the local football $\P ^{1}_{a,b}$ and the local
$B\znums _{2}$ gerbe.
\end{abstract}

\maketitle

\markright{The Orbifold Topological Vertex}

\tableofcontents

\section{Introduction}

The topological vertex is a powerful tool for computing the
Gromov-Witten (GW) or Donaldson-Thomas (DT) partition function of any
toric Calabi-Yau threefold (toric CY3).  The vertex was originally
discovered in physics using the duality between Chern-Simons theory
and topological string theory \cite{AKMV}. A mathematical treatment of
the topological vertex in \GW theory was given in
\cite{Liu-Liu-Zhou,Li-Liu-Liu-Zhou,MOOP}, and the topological vertex
for \DT theory was developed in \cite{MNOP1}, where it was used to
prove the DT/GW correspondence in the toric \CYthree case.

In this paper we develop a topological vertex formalism which computes
the \DT partition function of an \emph{orbifold} toric
\CYthree. 

The central object in our theory is the orbifold vertex $\Vsf
^{G}_{\lambda \mu \nu }$. It is a generating function for the number
of 3D partitions, colored by representations of $G$, and asymptotic to
a triple of 2D partitions $(\lambda ,\mu, \nu )$. Here $G$ is an
Abelian group acting on $\cnums ^{3}$ with trivial determinant and the
action dictates a fixed coloring scheme for the boxes in the 3D
partition (see \S~\ref{subsec: 3D parts, 2D parts, and the
vertex}). The usual topological vertex is the case where $G$ is the
trivial group.

Associated to an orbifold toric \CYthree $\X $ is a trivalent graph
whose vertices are the torus fixed points and whose edges are the
torus invariant curves. There is additional data at the vertices
describing the stabilizer group of the fixed point and there is
additional data at the edges giving the degrees of the line bundles
normal to the fixed curve. The general orbifold vertex formalism
determines the \DT partition function $DT (\X )$ by a
formula of the form
\begin{equation}\label{eqn: general vertex formula}
DT (\X )=
\sum _{\begin{smallmatrix} \text{edge}\\
\text{assignments} \end{smallmatrix}} \prod _{e\in \Edges }
\Esf (e) \prod _{v\in \Vertices }\hat{\Vsf} ^{G
}_{\lambda \mu \nu } (v)
\end{equation}
where the sum is over all ways of assigning 2D partitions to the
edges. The edge terms $\Esf (e)$ are relatively simple and depend on
the normal bundle of the corresponding curve as well as the partition
assigned to the edge. The vertex terms $\hat{\Vsf }^{G}_{\lambda \mu
\nu } (v)$ are given by the universal series $\Vsf ^{G}_{\lambda \mu
\nu }$ modified by certain signs with $G,\lambda, \mu, \nu $
obtained as the local group of the vertex $v$ and the partitions along
the incident edges.

To make the above formula computationally effective, one needs a
closed formula for the universal series $\Vsf ^{G}_{\lambda \mu \nu
}$.  One of our main results is Theorem~\ref{thm: formula for Z_n
vertex} which gives an explicit formula, in terms of Schur functions,
for $\Vsf ^{G}_{\lambda \mu \nu }$ in the case where $G$ is $\znums
_{n}$ acting on $\cnums ^{3}$ with weights $(1,-1,0)$. This
corresponds to the case where the orbifold structure of $\X $ occurs
along smooth, disjoint curves which then necessarily have transverse
$A_{n-1}$ singularities ($n$ can vary from curve to curve). We call
this the \emph{transverse $A_{n-1}$ case} and we make the above
formula fully explicit in that instance (Theorem~\ref{thm: main
formula for transverse An}).

Besides providing a tool to compute \DT partition
functions of orbifolds, our orbifold vertex formalism gives insight
into two central questions in the \DT theory of
orbifolds.
\begin{itemize}
\item How is the \DT theory of an orbifold $\X $ related
to the \GW theory of $\X $?
\item How is the \DT theory of $\X $ related to the
\DT theory of $Y$, a \CY resolution of $X$, the
singular space underlying $\X $?
\end{itemize}

The four relevant theories can be arranged schematically in the
diagram below:

\begin{center}
\setlength{\unitlength}{.5cm}
\begin{picture}(10,8) (1,-1) 
\thicklines
\put(0,0){$DT (\X )$}
\put(10,0){$GW (\X )$}
\put(0,5){$DT (Y )$}
\put(10,5){$GW (Y )$}
\put(3,.3){\line(1,0){6.5}}
\put(3,5.3){\line(1,0){6.5}}
\put(1,1.3){\line(0,1){3}}
\put(11,1.3){\line(0,1){3}}
\put(5,6.4){\scriptsize DT/GW }
\put(3.8,5.7){\scriptsize correspondence }
\put(4,1.4){\scriptsize orbi-DT/GW }
\put(3.8,0.7){\scriptsize correspondence }
\put(-4.0,3.1){\scriptsize DT crepant}
\put(-5.1,2.4){\scriptsize resolution conjecture }
\put(12.6,3.1){\scriptsize GW crepant}
\put(11.5,2.4){\scriptsize resolution conjecture }
\end{picture}
\end{center}

In the transverse $A_{n-1}$ case, or more generally when $\X $
satisfies the Hard Lefschetz condition \cite[Defn~1.1]{Bryan-Graber}
c.f. \cite[Lem~24]{Bryan-Gholampour3}, the (conjectural) equivalences
of the four theories take on a particularly nice form. Namely, the
(suitably renormalized) partition functions of the four theories are
equal after a change of variables and analytic continuation. For the
top equivalence, this is the famous DT/GW correspondence of Maulik,
Nekrasov, Okounkov, and Pandharipande \cite{MNOP1}, for the right
equivalence, this is the Bryan-Graber version of the crepant
resolution conjecture in \GW theory \cite{Bryan-Graber}.

In \S\ref{sec: applications of vertex}, we formulate the \DT crepant
resolution conjecture for $\X $ satisfying the hard Lefschetz
condition.  In a forthcoming paper \cite{Bryan-Cadman-Young-DTCRC}, we
will use our orbifold vertex to prove the conjecture for the case
where $\X $ is toric with transverse $A_{n-1}$ orbifold structure. We
will also formulate an orbifold version of the DT/GW
correspondence. This correspondence can be proved for a large class of
toric orbifolds with transverse $A_{n-1} $ structure by using the
other three equivalences in the diagram: our proof of the \DT
correspondence, the (non-orbifold) DT/GW correspondence of
\cite{MNOP1}, and a proof of the \GW crepant resolution conjecture for
a large class of toric orbifolds with transverse $A_{n-1}$ structure
which has been obtained by Coates and
Iritani\cite{Coates-Ititani-communication}.

Our paper is organized as follows.  In \S~\ref{sec: orbi-CY3s and DT
theory}, we define \DT theory for orbifolds. In \S~\ref{sec: the
vertex} we introduce the vertex formalism and give our main two
results: Theorem~\ref{thm: main formula for transverse An}, an
explicit formula for the partition function of an orbifold toric
\CYthree with transverse $A_{n-1}$ orbifold structure and
Theorem~\ref{thm: formula for Z_n vertex}, a formula for the $\znums
_{n}$ vertex in terms of Schur functions. In \S~\ref{sec: applications
of vertex}, we formulate the \DT crepant resolution conjecture. We
then use our vertex formalism to compute the partition function of the
local football (Proposition~\ref{prop: DT of local football}) and the
local $B\znums _{2}$-gerbe (\S~\ref{subsec: the local BZ2
gerbe}). Each of these examples is used to illustrate the \DT crepant
resolution conjecture and the orbifold DT/GW correspondence. The
derivation of the vertex formalism and the proof of Theorem~\ref{thm:
main formula for transverse An} begins in \S~\ref{sec: proof of main
thm}. A key component of the proof is a K-theory decomposition of the
structure sheaf of a torus invariant substack into edge and vertex
terms (Propositions~\ref{prop: K-theory decomposition of O_Y} and
\ref{prop: K-theory decomp of an edge} and Lemma~\ref{lem: vertex
K-theory decomp}). The proof of Theorem~\ref{thm: main formula for
transverse An} is finished in \S~\ref{sec: signs} where the signs in
the vertex formula are derived. Finally, a proof of Theorem~\ref{thm:
formula for Z_n vertex} is given in \S~\ref{sec: proof of Zn vertex
formula} using vertex operators. Necessary background on orbifold
toric \CYthrees and orbifold Riemann-Roch is collected in two brief
appendices.


\section{Orbifold \CYthrees and \DT theory}\label{sec: orbi-CY3s and DT theory}

\subsection{Orbifold \CYthrees} An \emph{orbifold \CYthree}
is defined to be a smooth, quasi-projective, Deligne-Mumford stack $\X
$ over $\cnums $ of dimension three having generically trivial
stabilizers and trivial canonical bundle,
\[
K_{\X }\cong \O_{\X }.
\]
The definition implies that the local model for $\X $ at a point $p$
is $[\cnums ^{3}/G_{p}]$ where $G_{p}\subset SL (3,\cnums )$ is the
(finite) group of automorphisms of $p$.

\subsection{The $K$-theory of $\X $.}  Our \DT invariants
will be indexed by compactly supported elements of $K$-theory, up to
numerical equivalence. Let $K_{c} (\X )$ be the Grothendieck group of
compactly supported coherent sheaves on $\X $. We say that
$F_{1},F_{2}\in K_{c} (\X )$ are \emph{numerically equivalent},
\[
F_{1}\sim_{num} F_{2},
\]
if
\[
\chi (E\otimes F_{1}) = \chi (E\otimes F_{2})
\]
for all sheaves $E$ on $\X $.

In this paper, $K$-theory will always mean compactly supported
$K$-theory modulo numerical equivalence:
\[
K (\X ) = K_{c} (\X )/\sim _{num}.
\]
There is a natural filtration
\[
F_{0}K(\X)\subset F_{1}K(\X)\subset F_{2}K (\X )\subset K (\X )
\]
given by the dimension of the support. An element of $F_{d}K (\X )$ can
be represented by a formal sum of sheaves having support of dimension
$d$ or less.

\subsection{The Hilbert scheme of substacks} Given $\alpha \in K (\X
)$, we define
\[
\Hilb ^{\alpha } (\X )
\]
to be the category of families
of substacks $Z\subset \X $ having $[\O _{Z}]=\alpha $. By a theorem
of Olsson-Starr \cite{Olsson-Starr}, $\Hilb ^{\alpha } (\X )$ is
represented by a scheme which we also denote by $\Hilb ^{\alpha } (\X
)$. Note that our indexing is slightly different than Olsson-Starr
who index instead by the corresponding Hilbert function
\[
E\mapsto \chi (E\otimes \alpha ).
\]
Note that the Hilbert scheme $\Hilb ^{\alpha } (\X )$ is a scheme rather
than just a stack, as its objects (substacks $Z\subset \X $) do not
have automorphisms.

\subsection{Definition of \DT invariants} In
\cite{Behrend-micro}, Kai Behrend defined an integer-valued
constructible function
\[
\nu_{S}: S\to \znums
\]
associated to any scheme $S$ over $\cnums $.

\begin{defn}\label{defn: DT invariant}
The \DT invariant of $\X $ in the class $\alpha \in K (\X
)$ is given by the topological Euler characteristic of $\Hilb ^{\alpha
} (\X )$, weighted by Behrend's function $\nu :\Hilb ^{\alpha } (\X
)\to \znums $. That is
\begin{align*}
DT_{\alpha } (\X )&=e (\Hilb ^{\alpha } (\X ),\nu )\\
&=\sum _{k\in \znums }k \, e\left(\nu ^{-1} (k) \right)
\end{align*}
where $e (-)$ is the topological Euler characteristic.
\end{defn}

\begin{remark}
In the case where $\X $ is compact and a scheme, and $\alpha \in F_{1}K (\X
)$, our definition coincides (via Behrend
\cite[Theorem~4.18]{Behrend-micro}) with the definition given in
\cite{MNOP1} which uses a perfect obstruction theory. It should
be possible to construct a perfect obstruction theory on $\Hilb
^{\alpha } (\X )$ along the lines of \cite{MNOP1,Thomas}, but we don't
pursue that in this paper. One advantage of defining the invariants
directly in terms of the weighted Euler characteristic is that
$DT_{\alpha } (\X )$ is well defined for non-compact geometries.
\end{remark}

\begin{remark}
If $\alpha =[\O _{Z}]\in F_{1}K(\X)$ and $\X =X$ is a scheme, we can
recover the more familiar discrete invariants $n=\chi (\O _{Z})$ and
$\beta =[Z]\in H_{2} (X)$ via the Chern character:
\[
ch (\O _{Z}) = [Z]^{\vee } + \chi (\O _{Z}) [pt]^{\vee }.
\]
\end{remark}

\subsection{\DT partition functions.}

We define the \emph{\DT partition function} by
\[
DT (\X ) = \sum _{\alpha \in F_{1}K(\X)} DT_{\alpha } (\X ) q^{\alpha }.
\]

With an appropriate choice of a basis $e_{1},\dotsc ,e_{r}$ for $F_{1}K (\X
)$, we can regard $DT (\X )$ as a formal Laurent series in
a set of variables $q_{1},\dotsc ,q_{r}$ where
\[
q^{\alpha } = q_{1}^{d_{1}}\dotsb q_{r}^{d_{r}}
\]
for $\alpha =\sum _{i=1}^{r}d
_{i}e_{i}$.

We define the \emph{degree zero} \DT partition function by
\[
DT_{0} (\X ) = \sum _{\alpha \in F_{0}K(\X)}DT_{\alpha } (\X )q^{\alpha },
\]
and we define the \emph{reduced} \DT partition function by
\[
DT' (\X ) = \frac{DT (\X )}{DT_{0} (\X )}.
\]

In the case where $\X =X$ is a scheme, Maulik, Nekrasov, Okounkov, and
Pandharipande conjectured that the reduced \DT partition
function is equal to the reduced \GW partition function
after a change of variables \cite[Conjecture~2]{MNOP1}.

\section{The Orbifold Vertex Formalism}\label{sec: the vertex}

In the case where $\X =X$ is a scheme and toric, the topological
vertex formalism computes the \DT partition function $DT (X )$ in
terms of the topological vertex $\Vsf _{\lambda \mu \nu }$, a
universal object which is a generating function for 3D partitions
asymptotic to $(\lambda ,\mu ,\nu )$. We extend the vertex formalism
to toric orbifolds, particularly in the case where $\X $ has
transverse $A_{n-1}$ orbifold structure.

\subsection{3D partitions, 2D partitions, and the vertex.} \label{subsec: 3D parts, 2D parts, and the vertex}

\begin{defn}\label{defn: 3D partition asympt to (a,b,c)} Let $(\lambda
,\mu ,\nu )$ be a triple of ordinary partitions. A \emph{3D
partition $\pi $ asymptotic to $(\lambda ,\mu ,\nu )$} is a subset
\[
\pi \subset \left(\znums _{\geq 0} \right)^{3}
\]
satisfying
\begin{enumerate}
\item if any of $(i+1,j,k)$, $(i,j+1,k)$, and $(i,j,k+1)$ is in $\pi
$, then $(i,j,k)$ is also in $\pi $, and
\item
\begin{enumerate}
\item $(j,k)\in \lambda $ if and only if $(i,j,k)\in \pi $ for all $i\gg 0$,
\item $(k,i)\in \mu  $ if and only if $(i,j,k)\in \pi $ for all $j\gg 0$,
\item $(i,j)\in \nu  $ if and only if $(i,j,k)\in \pi $ for all $k\gg 0$.
\end{enumerate}
\end{enumerate}
where we regard ordinary partitions as finite subsets of $\left(\znums
_{\geq 0} \right)^{2}$ via their diagram.
\end{defn}

Intuitively, $\pi $ is a pile of boxes in the positive octant of
3-space.  Condition (1) means that the boxes are stacked stably with
gravity pulling them in the $(-1,-1,-1)$ direction; condition (2)
means that the pile of boxes is infinite along the coordinate axes
with cross-sections asymptotically given by $\lambda $, $\mu $, and
$\nu $.

The subset $\{(i,j,k ): (j,k)\in \lambda \}\subset \pi $ will be
called the \emph{leg} of $\pi $ in the $i$ direction, and the legs in
the $j$ and $k$ directions are defined analogously. Let
\begin{equation}\label{eqn: xi=1-number of legs}
\xi _{\pi } (i,j,k) = 1 - \# \text{ of legs of $\pi $ containing }
(i,j,k) .
\end{equation}

We define the renormalized volume of $\pi $ by
\[
|\pi | = \sum _{(i,j,k)\in \pi } \xi _{\pi } (i,j,k).
\]
Note that $|\pi |$ can be negative.
\begin{defn}\label{defn: box counting vertex}
The topological vertex $\Vsf_{\lambda \mu \nu }$ is defined to be
\[
\Vsf _{\lambda \mu \nu } = \sum _{\pi } q^{|\pi |}
\]
where the sum is taken over all 3D partitions $\pi $ asymptotic to
$(\lambda ,\mu ,\nu )$. We regard $\Vsf _{\lambda \mu \nu }$ as a
formal Laurent series in $q$. Note that $\Vsf _{\lambda \mu \nu }$ is
clearly cyclically symmetric in the indices, and reflection about the
$i=j$ plane yields
\[
\Vsf _{\lambda \mu \nu } = \Vsf _{\mu '\lambda '\nu '}
\]
where $'$ denotes conjugate partition:
\[
\lambda ' = \{(i,j): (j,i)\in \lambda  \}.
\]

\end{defn}
This definition of topological vertex differs from the vertex $C
(\lambda ,\mu ,\nu )$ of the physics literature by a normalization
factor. Our $\Vsf _{\lambda \mu \nu }$ is equal to $P (\lambda ,\mu
,\nu )$ defined by Okounkov, Reshetikhin, and Vafa
\cite[eqn~3.16]{Ok-Re-Va}. They derive an explicit formula for $\Vsf
_{\lambda \mu \nu }=P (\lambda, \mu, \nu )$ in terms of Schur
functions \cite[eqns~3.20 and 3.21]{Ok-Re-Va}.

The $\znums _{n}$ orbifold vertex counts 3D partitions colored with $n
$ colors.  We color the boxes of a 3D partition $\pi $ according to
the rule that a box $(i,j,k)\in \pi $ has color $i-j \mod n$
(c.f. \cite{Bryan-Young}).

\begin{defn}\label{defn: Zn vertex}
The $\znums _{n}$ vertex $\Vsf ^{n}_{\lambda \mu \nu }$ is defined by
\[
\Vsf ^{n}_{\lambda \mu \nu } = \sum _{\pi } q_{0}^{|\pi |_{0}}\dotsb q_{n-1}^{|\pi |_{n-1}}
\]
where the sum is taken over all 3D partitions $\pi $ asymptotic to
$(\lambda, \mu, \nu )$ and $|\pi |_{a}$ is the (normalized) number of
boxes of color $a$ in $\pi $. Namely
\[
|\pi |_{a} = \sum _{\begin{smallmatrix} i,j,k\in \pi \\
i-j=a\mod n \end{smallmatrix}} \xi _{\pi } (i,j,k)
\]
where $\xi _{\pi}$ is defined in equation~\eqref{eqn: xi=1-number of
legs}.
\end{defn}
 
Note that the $\znums _{n}$-orbifold vertex $\Vsf ^{n}_{\lambda \mu
\nu }$ has fewer symmetries than the usual vertex since the $k $ axis
is distinguished. However, reflection through the $i=j $ plane yields
\[
\Vsf ^{n}_{\lambda \mu \nu } (q_{0},q_{1},\dotsc ,q_{n-1}) = \Vsf
^{n}_{\mu '\lambda '\nu '} (q_{0},q_{n-1},\dotsc ,q_{1}).
\]
In general, if $F$ is a series in the variables $q_{k}$ with $k\in
\znums _{n}$, we let $\overline{F}$ denote the same series with the
variable $q_{k} $ replaced by $q_{-k}$. So for example, the above
symmetry can be written
\[
\Vsf ^{n}_{\lambda \mu \nu } = \overline{\Vsf }^{n}_{\mu '\lambda '\nu '}.
\]

The $G$ vertex is defined in general as follows. Given a finite
Abelian group $G$ acting on $\cnums ^{3}$ via characters
$r_{1},r_{2},r_{3}$ we define
\begin{equation}\label{eqn: definition of VG}
\Vsf ^{G}_{\lambda \mu \nu } = \sum _{\pi } \prod _{r\in \hat{G}}
q_{r}^{|\pi |_{r}}
\end{equation}
where the sum is over 3D partitions asymptotic to $(\lambda ,\mu ,\nu
)$ and where $|\pi |_{r}$ is the (normalized) number of boxes in $\pi
$ of color $r\in \hat{G}$:
\[
|\pi |_{r} = \sum _{\begin{smallmatrix} i,j,k\in \pi \\
r_{1}^{i}r_{2}^{j}r_{3}^{k} =r \end{smallmatrix}}\xi _{\pi } (i,j,k).
\]

One of our main results is an explicit formula for the $\znums
_{n}$-orbifold vertex (see Theorem~\ref{thm: formula for Z_n vertex}).

\subsection{Orbifolds with transverse $A_{n-1}$
singularities.}\label{subsec: conventions for An-1 toric orbifolds}

Let $\X $ be a orbifold toric \CYthree whose orbifold structure is
supported on a disjoint union of smooth curves. Then the local group
along each curve is $\znums _{n}$ (where $n$ can vary from curve to
curve) and the coarse space $X$ has transverse $A_{n-1}$ singularities
along the curves. By
Lemma~\ref{lem:toric_orbiCY_determined_by_cspace}, $\X $ is determined
by its coarse space $X$.

The combinatorial data determining a toric variety $X$ is well
understood and is most commonly expressed as the data of a fan (by the
Lemma~\ref{lem:toric_orbiCY_determined_by_cspace}, we do not require
the stacky fans of Borisov, Chen and Smith
\cite{Borisov-Chen-Smith}). In the case of a orbifold toric \CYthree,
it is convenient to use equivalent (essentially dual) combinatorial
data, namely that of a $(p,q)$-web diagram. Web diagrams are discussed
in more detail in \S~\ref{app: web diagrams}.

Associated to $\X $ is a planar trivalent graph $\Gamma=\{\Edges
,\Vertices \} $ where the vertices correspond to torus fixed points,
the edges correspond to torus fixed curves, and the regions in the
plane delineated by the graph correspond to torus fixed
divisors. $\Gamma $ will necessarily have some non-compact edges;
these correspond to non-compact torus fixed curves. We denote the set
of compact edges by $\Edges ^{cpt}.$

It will be notationally convenient to choose an \emph{orientation} on
$\Gamma $:
\begin{defn}\label{defn: orientation on a graph}
Let $\Gamma $ be a trivalent planar graph. An \emph{orientation} is a
choice of direction for each edge and an ordering $(e_{1} (v),e_{2}
(v),e_{3} (v))$ of the edges incident to each vertex $v$ which is
compatible with the counterclockwise cyclic ordering.
\end{defn}

Given an orientation on the graph $\Gamma $ associated to $\X $, let
the two regions in the plane incident to an edge $e$ be denoted by $D
(e)$ and $D' (e)$ with the convention that $D (e)$ lies to the right
of $e$ (see Figure~\ref{fig: edge e with f,f',g,g'}). We also use $D
(e)$ and $D' (e)$ to denote the corresponding torus invariant divisors
and we let $C (e)\subset \X $ denote the torus invariant curve
corresponding to $e$. Let $p_{0} (e)$ and $p_{\infty } (e)$ denote the
the torus fixed points corresponding to the initial and final vertices
incident to $e$. Let $D_{0} (e)$ and $D_{\infty } (e)$ denote the
torus invariant divisors meeting $C (e)$ transversely at $p_{0} (e)$
and $p_{\infty } (e)$ respectively.  Let $D_{1} (v),D_{2} (v),D_{3}
(v)$ be the regions (and the corresponding torus invariant divisors)
opposite the edges $e_{1} (v),e_{2} (v),e_{3} (v)$.

Let
\begin{align*}
m=m (e)&= \deg \O _{C (e)} (D (e))\\
m'=m' (e)&= \deg \O _{C (e)} (D' (e)).
\end{align*}

\begin{figure}[h!]
\setlength{\unitlength}{.7cm}
\begin{picture}(25,6) (10,3.5)
\thicklines
\put(22,6){\line(1,1){2}}
\put(22,6){\line(1,-1){2}}
\put(16,6){\line(1,0){6}}
\put(16,6){\line(-1,1){2}}
\put(16,6){\line(-1,-1){2}}
\put(22,6){\vector(1,1){1.3}} 
\put(23.5,4.5){\vector(-1,1){.5}}  
\put(16,6){\vector(-1,-1){1.3}}  
\put(14.5,7.5){\vector(1,-1){.5}} 
\put(16,6){\vector(1,0){2.5}}  

\put(19,5.5){$e$}
\put(21.9,7.0){$g'$}
\put(15.3,7.0){$f'$}
\put(21.9,4.5){$g$}
\put(15.2,4.5){$f$}
\put(18.5,7){$D' (e)$}
\put(18.5,4.5){$D (e)$}
\put(13.3,6){$D_{0} (e)$}
\put(23,6){$D_{\infty } (e)$}
\end{picture}
\caption{The edge $e$ with orientations chosen for adjacent edges.}\label{fig: edge e with f,f',g,g'}
\end{figure}
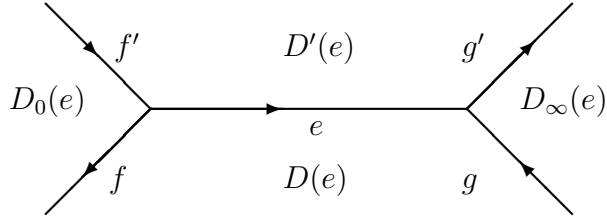

Define $n (e)$ such that $\znums _{n (e)}$ is the local group of $C
(e)\subset \X $. If $n (e)\neq 1$, then $C (e)$ is a $B\znums _{n
(e)}$ gerbe over $\P ^{1}$ and
\[
m ,m'\in \frac{1}{n (e)}\znums 
\]
with 
\[
m +m' =-2.
\]
If $n
(e)=1$, then one of
\[
a=n (f),\quad 
a'=n (f'),
\]
and/or one of
\[
b=n (g),\quad 
b'=n (g'),
\]
is possibly greater than one and $C (e)$ is a football: a $\P ^{1}$
with root constructions of order $\max (a,a')$ and $\max (b,b')$ at $0
$ and $\infty $.

We define
\[
\delta _{0}=\begin{cases}
1&\text{ if $a>1$}\\
0&\text{ if $a=1$}.
\end{cases}
\]
We define $\delta _{0}'$, $\delta _{\infty }$, and $\delta '_{\infty
}$ similarly according to the values of $a'$, $b$, and $b'$
respectively. Note that at least one of $(\delta _{0},\delta _{0}')$
is zero and likewise for $(\delta _{\infty },\delta _{\infty
}')$. Using the condition that $\O _{C} (D+D')=K_{C}=\O _{C}
(-p_{0}-p_{\infty })$, we can write
\begin{align*}
\O _{C} (D)&= \O _{C} (\tilde{m}p-\delta _{0}p_{0}-\delta _{\infty }p_{\infty }),\\
\O _{C} (D')&= \O _{C} (\tilde{m}'p-\delta' _{0}p_{0}-\delta' _{\infty }p_{\infty }),
\end{align*}
where
\begin{align*}
m&=\tilde{m}-\frac{\delta _{0}}{a}-\frac{\delta _{\infty }}{b},\\
m'&=\tilde{m}'-\frac{\delta' _{0}}{a'}-\frac{\delta' _{\infty }}{b'}
\end{align*}
since $p_{0},p_{\infty }\in C$ are orbifold points of order $\max
(a,a')$ and $\max (b,b')$ respectively. Note that $\tilde {m},\tilde {m}'\in \znums $ and the \CY condition implies
\[
\tilde {m}+\tilde {m}' = \delta _{0}+\delta _{0}' +\delta _{\infty } +\delta _{\infty }'-2.
\]
By convention, we define $\tilde {m}=m$ and $\tilde {m}'=m'$ if $n
(e)>1$ (but note that in this case, $\tilde {m}$ and $\tilde {m}'$ may not be integers).

\subsection{Generators for $F_{1}K (\X )$}\label{subsec: generators for K}

To write an explicit formula for $DT (\X )$, we must choose generators
for $F_{1}K (\X )$. Let $p\in \X $ be a generic point and let $p (e)$
be a generic point on the curve $C (e)$ (so $p (e)\cong B\znums _{n
(e)}$). Let $\rho _{k}$, $k\in \{0,\dotsc ,n (e)-1 \}$ be the
irreducible representations of $\znums _{n (e)}$ with the indexing
chosen so that
\[
\O _{p (e)} (-kD (e)) \cong \O _{p (e)}\otimes \rho _{k}.
\]
We define the following classes in $F_{1}K (\X )$
and their associated variables.
\smallskip

\begin{center}
\begin{tabular}{|c|c|c|}
\hline 
Class in $F_{1}K (\X )$&Associated variable & Indexing set\\
\hline 
&      &	\\
$[\O _{p}]$&$q$&\\
&    &	\\
\hline 
&      &\\	
$[\O _{p (e)}\otimes \rho _{k}]$&$q_{e,k}$&$ e\in \Edges 
,\quad k\in  \{{\scriptstyle 0,\dotsc ,n (e)-1} \}$\\
&    &	 \\
\hline
&	&	\\
$[\O _{C (e)} (-1)\otimes \rho _{k}]$&$v_{e,k}$&$e\in \Edges ^{cpt}, k\in \{{\scriptstyle 0,\dotsc ,n (e)-1} \}$\\
&    &	 \\
\hline 
\end{tabular}
\end{center}
\smallskip

Pushforwards by the inclusions of $p$, $p (e)$, and $C (e) $ into $\X
$ are implicit in the above. The class $[\O _{C (e)} (-1)\otimes \rho
_{k}]$ is defined as follows. The curve $C (e)$ is a $B\znums _{n
(e)}$ gerbe over $\P ^{1}$. If $C (e)\cong \P^1\times B\znums _{n
(e)}$ is the trivial gerbe, then $\O _{C (e)} (-1)$ is pulled back
from $\P ^{1}$ and $\rho _{k}$ is pulled back from $B\znums _{n
(e)}$. More generally, let $\pi :\tilde{C} (e)\to C (e)$ be the degree
$n$ cover obtained from the base change $\P ^{1}\to \P ^{1}$,
$z\mapsto z^{n}$. Then $\tilde{C} (e)$ is the trivial $B\znums _{n
(e)}$ gerbe and we define $[\O _{C (e)} (-1)\otimes \rho _{k}]$ to be
the class $\frac{1}{n}\pi _{*}[\O _{\tilde{C} (e)} (-1)\otimes \rho
_{k}]$. In general, this class is not defined with $\znums $
coefficients.

The above classes generate $F_{1}K (\X )$ (over $\qnums $) but there
are relations. In particular, for each $e\in \Edges $, there is the
relation
\begin{equation}\label{eqn: O_p=O_p(e)otimes Reg}
[\O _{p}]=[\O _{p (e)}\otimes R_{reg}]
\end{equation}
where $R_{reg}=\sum _{k}\rho _{k}$ denotes the regular representation
of $\znums _{n (e)}$. This relation gives rise to the relation
\[
q=\prod _{k=0}^{n (e)-1}q_{e,k}.
\]
There may be additional relations among the classes supported on
curves coming from the global geometry of $\X $. We leave relations
among the corresponding variables implicit in all our formulas.

\begin{remark}
If $n (e)=1$ for all edges $e$, then $\X =X$ is not an orbifold. In
this case, the only variables are $q$ corresponding to $[\O _{p}]$ and
$v_{e}$ corresponding to $\O _{C (e)} (-1)$. If $Z\subset X$
is a subscheme with $\chi (\O _{Z})=n$ and 
\[
\beta =[Z]=\sum _{i}d_{i}[C(e_{i})],
\]
then
\[
[\O _{Z}] = n[\O _{p}] +\sum _{i}d_{i}[\O _{C (e_{i})} (-1)]
\]
in K-Theory. Thus the associated \DT invariant appears as
the coefficient of $q^{n}v^{\beta }=q^{n}\prod _{i}v_{i}^{d_{i}}$
which is consistent with the notation of \cite{MNOP1}.
\end{remark}
 
\subsection{The vertex formula}\label{subsec: the vertex formula}
Let 
\[
\lambda [k,n]  = \{(i,j)\in \lambda :i-j=k\mod n \}
\]
be the set of boxes in $\lambda $ of color $k\mod n$. Let
\[
|\lambda |_{k}=|\lambda [k,n]|
\]
denote the number of boxes of color $k$ in $\lambda $. Usually, $n$ is
understood from the context, but if we need to make it explicit, we
write $|\lambda |_{k,n}$.

\begin{defn}\label{defn: edge assignment}
An \emph{edge assignment} on $\Gamma $ is a choice of a partition
$\lambda (e)$ for each edge $e$ such that $\lambda (e)=\emptyset $ for
every non-compact edge. An edge assignment is called
\emph{multi-regular} if each $\lambda =\lambda (e)$ satisfies
$|\lambda |_{k} = \frac{1}{n}|\lambda |$ for all $k$.
\end{defn}

Assume that $\Gamma $ has an orientation (Definition~\ref{defn:
orientation on a graph}).  Given an edge assignment and a vertex $v$,
we get a triple of partitions $(\lambda _{1} (v),\lambda _{2}
(v),\lambda _{3} (v))$ by setting $\lambda _{i} (v)=\lambda (e_{i}
(v))$ if $e (v_{i}) $ has the orientation pointing outward from $v$
and $\lambda _{i} (v)=\lambda (e_{i} (v))'$ if $e_{i} (v)$ has the
inward orientation. We also impose the convention that if any of the
edges $e_{i} (v)$ have $n (e_{i} (v))\neq 1$, then we fix the ordering
so that this (necessarily unique) edge is given by $e_{3} (v)$. We
will call such an edge the \emph{special edge} and denote it also as
simply $e (v)$.

The following quantities are used in the vertex formula. Let
\[
C^{\lambda }_{\tilde{m},\tilde{m}'} = \sum _{(i,j)\in \lambda }
-\tilde{m}i-\tilde{m}'j+1.
\]
and let
\[
C^{\lambda }_{\tilde{m},\tilde{m}'}[k,n] = \sum _{(i,j)\in
\lambda[k,n] } -\tilde{m}i-\tilde{m}'j+1.
\]

We define
\[
A_{\lambda } (k,n) =\sum _{(i,j)\in \lambda } \left\lfloor
\frac{i+k}{n} \right\rfloor.
\]

Let $e=e (v)$ be the special edge associated to a vertex. We write
\[
q_{v} = \begin{cases}
(q_{e,0},q_{e,1}\dotsb ,q_{e,n (e)-1} )& \text{ if $e$ is oriented outward from $v$ and}\\
(q_{e,0},q_{e,n (e)-1},\dotsc ,q_{e,1} )& \text{ if $e$ is oriented inward toward $v$.}
\end{cases}
\]
We define
\begin{equation}\label{eqn: signs of vertex vars}
(-1)^{s (\lambda )} q_{v}
\end{equation}
to be the same as $ q_{v}$ but with the variable $q_{e,k}$
multiplied by the additional sign $(-1)^{s_{k} (\lambda )}$ where
\[
s_{k} (\lambda ) = |\lambda |_{k-1} +|\lambda |_{k+1}.
\]
Note that this sign is trivial in the multi-regular case.

We also adopt a product convention for our variables. Namely, we set
\begin{align*}
v_{e}^{|\lambda|} &:= \prod _{k=0}^{n (e)-1} v_{e,k} ^{|\lambda |_{k,n (e)}},\\
q_{e}^{C^{\lambda }_{\tilde {m},\tilde {m}'}}&:= \prod _{k=0}^{n (e)-1}  q_{e,k} ^{C^{\lambda }_{\tilde {m},\tilde {m}'}[k,n (e)]},\\
q_{e}^{A_{\lambda } }&:=\prod _{k=0}^{n (e)-1} q_{e,k}^{A_{\lambda }
(k,n (e))}.
\end{align*}

We will need an additional sign $(-1)^{\Ssf _{\lambda (e)}( e)}$
associated to each edge $e$. Let $\lambda =\lambda (e)$, $n=n (e)$,
and let
\begin{multline*}
\Ssf_{\lambda } (e) = \sum _{k=0}^{n-1}C^{\lambda }_{m,m'}[k,n]\, \Big(|\lambda |_{k-1} - |\lambda |_{k+1} \Big)
 + | \lambda |_{k} \,\Big( 1+ (1+\tilde {m}+\delta _{0}+\delta _{\infty })|\lambda |_{k-1}\Big).
\end{multline*}
Note that in the multi-regular case this sign simplifies
significantly:
\[
(-1)^{\Ssf _{\lambda }(e)} = (-1)^{(\tilde {m}+\delta _{0}+\delta
_{\infty }) |\lambda |}
\]

Finally, we need on more sign $(-1)^{\Sigma _{\pi (v)}}$ attached to
each vertex partition. Here
\[
\Sigma _{\pi (v)} = \sum _{k=0}^{n-1} |\lambda _{3}|_{k}\left(|\lambda _{1}|_{k}+|\lambda _{2}|_{k} +|\lambda _{1}|_{k-1}+|\lambda _{2}|_{k+1} \right)
\]
where $\lambda _{1},\lambda _{2},\lambda _{3}$ are the legs of $\pi
(v)$ and the color of $(j,k)\in \lambda _{1}$, $(k,i)\in \lambda
_{2}$, and $(i,j)\in \lambda _{3}$ is given by $i-j\mod n$. Note that
in the multi-regular case, this sign is trivial. Indeed, then
$|\lambda _{3}|_{k}$ is independent of $k$ and so the sum can be
rearranged so that the other terms cancel mod 2 in pairs.

The following theorems provide an explicit formula for the
\DT partition function of a toric orbifold with
transverse $A_{n-1}$ singularities.

\begin{theorem}\label{thm: main formula for transverse An}
Let $\X $ be a orbifold toric \CYthree with transverse $A_{n-1}$
singularities and let $\Gamma $ be the diagram of $\X $. Define
$\underline {DT} (\X )$ to be
\[
\sum _{\begin{smallmatrix} \text{edge}\\
\text{assignments} \end{smallmatrix}} \prod _{e\in \Edges } \Esf
_{\lambda (e)}(e) \prod _{v\in
\Vertices } (-1)^{\Sigma _{\pi  (v)}}\Vsf ^{n (e (v))}_{\lambda _{1} (v)\lambda _{2} (v)\lambda
_{3} (v)} \left((-1)^{s (\lambda _{3} (v))} q_{v} \right)
\]
where
\[
\Esf _{\lambda}(e) = (-1)^{\Ssf_{\lambda} (e)}\,\,
v_{e}^{|\lambda |}\,
q_{e}^{C^{\lambda }_{\tilde {m} (e),\tilde {m}' (e)}}\,
\left(q_{f}^{A_{\lambda } } \right)^{\delta _{0}}
\left(q_{f'}^{A_{\lambda' }} \right)^{\delta _{0}'}
\Big(q_{g}^{A_{\lambda }} \Big)^{\delta _{\infty }}
\left(q_{g'}^{A_{\lambda' }} \right)^{\delta _{\infty }'}
\]
and where $(f,f',g,g')$ are the edges meeting $e$ arranged and
oriented as in Figure~\ref{fig: edge e with f,f',g,g'}.  Then the
\DT partition function $ DT (\X )$ is obtained from
$\underline{DT} (\X )$ by adding a minus sign to the variables
$q_{e,0}$ (and hence also to $q$). 
\end{theorem}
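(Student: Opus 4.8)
The plan is $T$-equivariant localization on $\Hilb^{\alpha}(\X)$, followed by reorganizing the resulting sum along the graph $\Gamma$. First I would classify the $T$-fixed locus $\Hilb^{\alpha}(\X)^{T}$: it is a finite set of isolated points, one for each datum consisting of (i) an edge assignment $\{\lambda(e)\}_{e\in\Edges}$ and (ii) a choice, at every vertex $v$, of a $3$D partition $\pi(v)$ asymptotic to $(\lambda_{1}(v),\lambda_{2}(v),\lambda_{3}(v))$. Indeed, away from the vertices a $T$-fixed substack is forced to be the $\znums_{n(e)}$-equivariant thickening of $C(e)$ cut out by $\lambda(e)$ in the normal plane (a non-compact edge cannot be thickened, which is why edge assignments vanish there), and in an affine chart around each $v$ it is cut out by a monomial ideal with the prescribed asymptotic $2$D partitions along the three incident edges. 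Since Behrend's function $\nu$ is canonical, hence $T$-invariant, the standard fact $e(S,\nu)=e(S^{T},\nu)$ gives $e(\Hilb^{\alpha}(\X),\nu)=\sum_{Z\in\Hilb^{\alpha}(\X)^{T}}\nu(Z)$, so $DT(\X)$ is a signed generating function over these combinatorial objects; it then remains to identify the class $[\O_{Z}]\in K(\X)$ (to know which monomial $Z$ contributes to) and the sign $\nu(Z)$.

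For the first, I would invoke the $K$-theory decomposition of $[\O_{Z}]$ into edge and vertex terms (Propositions~\ref{prop: K-theory decomposition of O_Y} and \ref{prop: K-theory decomp of an edge} and Lemma~\ref{lem: vertex K-theory decomp}). Expressed in the generators of \S~\ref{subsec: generators for K}, the edge term attached to $e$ depends only on $\lambda(e)$, on the normal degrees $\tilde{m}(e),\tilde{m}'(e)$, and on the $\delta$'s, and it produces exactly the monomial $v_{e}^{|\lambda|}\,q_{e}^{C^{\lambda}_{\tilde{m},\tilde{m}'}}\,(q_{f}^{A_{\lambda}})^{\delta_{0}}\cdots$; the vertex term attached to $v$ depends on $\pi(v)$ and produces $\prod_{r}q_{r}^{|\pi(v)|_{r}}$ in the special-edge variables, i.e. precisely the monomial appearing in the definition~\eqref{eqn: definition of VG} of $\Vsf^{n(e(v))}$ (the legs of $\pi(v)$, matching the $\lambda_{i}(v)$, are absorbed into the adjacent edge terms, which is the role of the $\xi_{\pi}$-renormalization). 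Holding the edge assignment fixed and summing $q^{[\O_{Z}]}$ over all choices of the vertex $3$D partitions collapses the vertex factors to $\Vsf^{n(e(v))}_{\lambda_{1}(v)\lambda_{2}(v)\lambda_{3}(v)}$ evaluated at the relevant variables. This yields the ``unsigned'' version of the formula: $\underline{DT}(\X)$ with all of $(-1)^{\Ssf}$, $(-1)^{\Sigma}$, $(-1)^{s}$ and the $q_{e,0}$-substitution set to $1$ equals $\sum_{Z}q^{[\O_{Z}]}$.

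The remaining and main task is to show that $\nu(Z)$ is the asserted product of signs. For the Hilbert scheme of any \CYthree the contribution of an isolated $T$-fixed point is $\nu(Z)=(-1)^{\dim_{\cnums}\Hom_{\X}(I_{Z},\O_{Z})}$: this follows from the local description of $\Hilb$ near $Z$ as a critical locus together with \cite{Behrend-micro}, and it matches the virtual localization sign of \cite{MNOP1} via the \CY duality between deformations and obstructions. The work, carried out in \S~\ref{sec: signs}, is a combinatorial evaluation of $\dim_{\cnums}\Hom_{\X}(I_{Z},\O_{Z})\bmod 2$: one computes the $T$-character of the Zariski tangent space chart by chart, organizes it by the edge/vertex decomposition, and shows the total parity is congruent mod $2$ to
\[
\sum_{e\in\Edges}\Ssf_{\lambda(e)}(e)\;+\;\sum_{v\in\Vertices}\Big(\Sigma_{\pi(v)}+\sum_{k}s_{k}(\lambda_{3}(v))\,|\pi(v)|_{k}\Big)\;+\;(\text{parity of the point-like part of }Z).
\]
Because $\Sigma_{\pi(v)}$ and the $s_{k}(\lambda_{3}(v))$ depend only on the legs of $\pi(v)$, not on its interior, the leg-dependent signs can be pushed inside $\Vsf^{n}$ as the modified evaluation at $(-1)^{s(\lambda_{3}(v))}q_{v}$ together with the prefactor $(-1)^{\Sigma_{\pi(v)}}$, while the purely edge-dependent contributions assemble into $(-1)^{\Ssf_{\lambda(e)}(e)}$ inside $\Esf_{\lambda(e)}(e)$; the leftover ``point-like'' parity is uniform enough to be absorbed into the substitution $q_{e,0}\mapsto-q_{e,0}$ (which, via the relation $q=\prod_{k}q_{e,k}$, also sends $q\mapsto-q$), exactly as in the $M(-q)$ phenomenon for the scheme vertex. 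Assembling the three steps proves the theorem.

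I expect the sign computation of \S~\ref{sec: signs} to be the principal obstacle. Even in the scheme case of \cite{MNOP1} the parity bookkeeping is delicate; here the $\znums_{n}$-coloring multiplies the number of local cases, introduces the floor-function quantities $A_{\lambda}(k,n)$ in the edge terms and the color-shift corrections $s_{k}$, and the gerbe and root-construction structure at footballs and special edges must be tracked with care. The genuinely subtle points will be verifying that all dependence of $\dim_\cnums\Hom_\X(I_Z,\O_Z)\bmod 2$ on the ``bodies'' of the $3$D partitions cancels except through the colored box-counts $|\pi(v)|_{k}$, and checking that what remains is precisely the substitution $q_{e,0}\mapsto-q_{e,0}$ and not some more complicated correction.
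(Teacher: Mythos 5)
Your proposal follows the paper's own strategy essentially verbatim: torus localization of the Behrend-weighted Euler characteristic to isolated fixed points indexed by edge assignments and vertex 3D partitions (Lemma~\ref{lem: fixed points in bijection with partitions} plus Behrend--Fantechi), the $K$-theory decomposition of $[\O_Z]$ into edge and vertex terms to produce the monomials, and the parity computation of the Zariski tangent space carried out in \S\ref{sec: signs} to produce the signs, assembled exactly as you describe. The only cosmetic difference is that the paper organizes the sign computation globally via equivariant Serre duality and the Euler pairing applied to the $K$-theory decomposition rather than a chart-by-chart character computation, but since you explicitly defer to \S\ref{sec: signs} for that step this is not a substantive divergence.
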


Note that for multi-regular edge assignments, the signs $(-1)^{\Sigma
_{\pi (v)}}$ and $(-1)^{s (\lambda _{3} (v))}$ are both 1.

\begin{remark}\label{rem: changing the orientation of an edge}
Switching the orientation of an edge $e$ has the effect of switching
the variables $q_{e,k}\leftrightarrow q_{e,n (e)-k}$, for $k=1,\dotsc
,n (e)-1$. The edge term in the formula is written for the
orientations in Figure~\ref{fig: edge e with f,f',g,g'} but is easily
modified to an arbitrary orientation using this rule.
\end{remark}

To make the above formula fully explicit, we give a closed formula for
the $\znums _{n}$ vertex $\Vsf ^{n}_{\lambda \mu \nu } (q_{0},\dotsc
,q_{n-1})$.  We first introduce a little more notation.

Consider the indices on the variables $q_{0},\dotsc ,q_{n-1}$ to be in
$\znums _{n}$ and define $\frakq _{t}$ recursively by $\frakq _{0}=1$
and
\[
\frakq _{t} = q_{t}\cdot \frakq _{t-1}
\]
for positive and negative $t$, in other words
\[
\{\dotsc ,\frakq _{-2},\frakq _{-1},\frakq _{0},\frakq
_{1},\frakq _{2},\dotsc \} = \{\dotsc
,q_{0}^{-1}q_{-1}^{-1},q_{0}^{-1},1,q_{1},q_{1}q_{2},\dotsc \}.
\]

Let
\[
q=q_{0}\dotsb q_{n-1}
\]
and let
\[
\frakq _{\bullet }  = \{\frakq _{0},\frakq _{1},\frakq _{2},\frakq _{3},\dotsc  \} = \{1,q_{1},q_{1}q_{2},q_{1}q_{2}q_{3},\dotsc  \}.
\]
Given a partition $\nu = (\nu _{0}\geq \nu _{1}\geq \dotsb )$, let
\[
\frakq _{\bullet -\nu } =\{\frakq _{-\nu _{0}},\frakq _{1-\nu _{1}},\frakq _{2-\nu _{2}},\frakq _{3-\nu _{3}},\dotsc  \}.
\]

\begin{theorem}\label{thm: formula for Z_n vertex}
The $\znums _{n}$ vertex $\Vsf ^{n}_{\lambda \mu \nu }(q_{0},\dotsc ,q_{n-1})$ is given by the following formula:
\[
\Vsf ^{n}_{\lambda \mu \nu } = \Vsf ^{n}_{\emptyset \emptyset
\emptyset } \cdot q^{-A_{\lambda }} \cdot \overline{q^{-A_{\mu '}}}
\cdot H_{\nu }\cdot O_{\nu } \cdot \sum _{\eta } q_{0}^{-|\eta |}\cdot
\overline{s_{\lambda '/\eta } (\frakq _{\bullet -\nu })}\cdot s_{\mu
/\eta } (\frakq _{\bullet -\nu' }).
\]
where $s_{\alpha /\beta }$ is the skew Schur function associated to
partitions $\beta \subset \alpha $ ($s_{\alpha /\beta }=0$ if $\beta
\not \subset \alpha $), the overline denotes the exchange of variables
$q_{k}\leftrightarrow q_{-k}$, and
\begin{align*}
H_{\nu' } &=  \prod_{(j,i) \in \nu'}\frac{1}{1-\prod_{s=1}^{n}q_s^{h^{s}_{\nu' }(j,i)}} \\
h^{s}_{\nu' } (j,i) &=\text{the number of boxes of color $s$ in the $(j,i)$-hook of $\nu '$,}\\
O_{\nu } &= \prod _{k=0}^{n-1}\Vsf ^{n}_{\emptyset \emptyset \emptyset } (q_{k},q_{k+1},\dotsc ,q_{n+k-1})^{-2|\nu |_{k} + |\nu |_{k+1}+ |\nu |_{k-1}},\\
\Vsf ^{n}_{\emptyset \emptyset \emptyset } &= M (1,q)^{n}\prod
_{0<a\leq b<n} M (q_{a}\dotsb q_{b},q) M (q^{-1}_{a}\dotsb q^{-1}_{b},q), \\
M(v,q) &= \prod _{m=1}^{\infty } \frac{1}{(1-vq^{m})^{m}}.
\end{align*}
Recall that by our product convention
\[
q^{-A_{\lambda }} = \prod _{k=0}^{n-1}q_{k}^{-A_{\lambda } (k,n)}. 
\]
\end{theorem}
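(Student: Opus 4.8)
The plan is to reduce the colored 3D-partition count defining $\Vsf^{n}_{\lambda\mu\nu}$ to a transfer-matrix/vertex-operator computation along the lines of Okounkov--Reshetikhin--Vafa \cite{Ok-Re-Va} and Bryan--Young \cite{Bryan-Young}, keeping track of the $\znums_{n}$ coloring throughout. First I would slice the 3D partition $\pi$ asymptotic to $(\lambda,\mu,\nu)$ by the family of diagonal planes $i-j=t$ (for $t\in\znums$); each slice is a 2D partition $\pi(t)$, and the asymptotics force $\pi(t)$ to stabilize to (the relevant rotation of) $\mu$ as $t\to+\infty$ and to $\lambda'$ as $t\to-\infty$, while the profile in the third direction is controlled by $\nu$. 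Along this slicing the color of a box $(i,j,k)$ depends only on $i-j\bmod n$ and on nothing else, so the color bookkeeping organizes itself by the residue of $t$: the variable $q_{t\bmod n}$ is the natural weight for the transition between slice $t-1$ and slice $t$, which is exactly why the renormalized variables $\frakq_{t}$ appear. The interlacing conditions between consecutive slices $\pi(t)$ and $\pi(t+1)$ are the usual $\prec$ / $\succ$ relations (depending on the sign of $t$ relative to the ``waist''), so the generating function becomes a vacuum-to-vacuum matrix element of an ordered product of half-vertex operators $\Gamma_{\pm}(x)$ acting on fermionic Fock space, with the insertion points $x$ given by the monomials $\frakq_{t}$ and, where the $\nu$-leg is present, shifted to $\frakq_{t-\nu_{t}}$.

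Next I would evaluate that matrix element. Commuting all the $\Gamma_{-}$'s to the left past all the $\Gamma_{+}$'s produces, by the standard commutation relation $\Gamma_{+}(x)\Gamma_{-}(y)=(1-xy)^{-1}\Gamma_{-}(y)\Gamma_{+}(x)$, an infinite product of factors $(1-\frakq_{a}\frakq_{b}^{-1})^{-1}$ (equivalently $(1-\text{monomial in the }q_{k})^{-1}$); regrouped by hook content of the partitions involved, these assemble into the $M(v,q)$-type products that make up $\Vsf^{n}_{\emptyset\emptyset\emptyset}$ and into the hook-length product $H_{\nu'}$, while the residual action of the remaining operators on the vacuum, read off slice by slice, produces the skew Schur functions $s_{\lambda'/\eta}$ and $s_{\mu/\eta}$ evaluated at the shifted variable sequences $\frakq_{\bullet-\nu}$ and $\frakq_{\bullet-\nu'}$, with $\eta$ the partition recording the ``overlap'' at the central slice and $q_{0}^{-|\eta|}$ its weight. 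The $\overline{(\;\cdot\;)}$ (exchange $q_{k}\leftrightarrow q_{-k}$) on the $\lambda'$ side is just the statement that the two halves of the slicing are read in opposite orientations. The prefactors $q^{-A_{\lambda}}$, $\overline{q^{-A_{\mu'}}}$ and the operator-degree factor $O_{\nu}$ are the ``renormalization'' corrections: $A_{\lambda}(k,n)=\sum_{(i,j)\in\lambda}\lfloor (i+k)/n\rfloor$ is exactly the discrepancy between the naive box count in the $\lambda$-leg and the renormalized count $\xi_{\pi}$ dictates, and similarly for $\mu'$ and for $\nu$ (the latter contributing the exponents $-2|\nu|_{k}+|\nu|_{k+1}+|\nu|_{k-1}$), so these must be computed by a direct (but routine) comparison of the two normalizations on the three legs.

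I would organize the write-up in three steps: (1) set up the diagonal slicing and the fermionic formalism, identifying the colored weight of each transition, and prove the matrix-element formula; (2) perform the normal-ordering/commutation to extract $\Vsf^{n}_{\emptyset\emptyset\emptyset}$, $H_{\nu'}$, $O_{\nu}$ and the skew-Schur sum over $\eta$; (3) reconcile the box-counting normalizations, producing the $q^{-A_{\lambda}}$, $\overline{q^{-A_{\mu'}}}$ factors and checking the $\nu$-leg contribution. I expect the main obstacle to be Step (1) in the presence of nonempty $\nu$: the third leg breaks the clean translation-invariance of the slicing, one must shift the insertion points by $\nu$ and carefully match the interlacing pattern near the ``waist'' of the partition, and the color labels of the shifted slices have to be tracked mod $n$ so that the $\frakq_{\bullet-\nu}$ and $\frakq_{\bullet-\nu'}$ sequences come out with exactly the right residues — a bookkeeping task where sign and index errors are easy to make. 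A useful consistency check at the end is to specialize $n=1$ and confirm that the formula collapses to the Okounkov--Reshetikhin--Vafa expression for $P(\lambda,\mu,\nu)$ \cite[eqns~3.20 and 3.21]{Ok-Re-Va}, and to specialize $\nu=\emptyset$ to check against the colored topological vertex of \cite{Bryan-Young}.
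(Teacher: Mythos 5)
Your overall strategy coincides with the paper's: diagonal slicing along $i-j=t$, the colored transfer-matrix/vertex-operator expression with insertion points $\frakq_t$ shifted by the edge sequence of $\nu'$, normal ordering to produce the scalar prefactors and the skew-Schur sum over $\eta$, and the framing factors $q^{-A_\lambda}$, $\overline{q^{-A_{\mu'}}}$ from comparing the two box-counting normalizations on the $\lambda$- and $\mu$-legs. Steps (1) and the $\lambda,\mu$ part of step (3) are exactly Proposition~\ref{prop: vertex as operator expression} and Lemma~\ref{lem:framingfactor}.

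There is, however, a genuine gap in your account of where $O_\nu$ comes from and of how the infinite commutation product is evaluated. You assert that $O_\nu$ is a renormalization correction for the $\nu$-leg, ``computed by a direct (but routine) comparison of the two normalizations on the three legs,'' analogous to $A_\lambda$. This cannot work: any discrepancy between the weights $\xi_\pi$ and the operator-assigned weights on a leg is a \emph{monomial} in the $q_k$ (as $q^{A_\lambda}$ is), whereas $O_\nu$ is an infinite product of copies of $\Vsf^n_{\emptyset\emptyset\emptyset}$ in cyclically permuted variables. Moreover $O_\nu$ depends only on the $n$-core of $\nu$, which no per-box renormalization would produce. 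In the paper, $O_\nu$ (together with $\Vsf^n_{\emptyset\emptyset\emptyset}$) arises from the \emph{infinite} part of the normal-ordering scalar $\prod_{t<t'}(1-\frakq_t^{-1}\frakq_{t'})^{\frac12(\nu'(t')-\nu'(t))}$; only the finite part, indexed by the hooks of $\nu'$, regroups into $H_{\nu'}$ and the monomial $\Mon_{\nu'}$. Establishing that the infinite part equals $\Vsf^n_{\emptyset\emptyset\emptyset}\cdot O_\nu$ is the crux of the argument (Proposition~\ref{prop: retrograde prop}): the paper proves it by showing the product is unchanged under adding $n$-ribbons to $\nu'$, hence depends only on the $n$-core, and then computing its variation under the core-generating operators $R_kR_{k+1}^{-1}$ via the $n$-quotient/$n$-core formalism. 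Your write-up identifies step (1) as the main obstacle and treats this evaluation as routine ``regrouping by hook content''; in fact step (1) is comparatively standard, and it is this infinite-product identity that requires the new combinatorial input. Your proposed consistency checks ($n=1$ against \cite{Ok-Re-Va}, $\nu=\emptyset$ against \cite{Bryan-Young}) are good but would not detect the missing argument, since $O_\nu$ is trivial in both specializations.
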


Note that in the multi-regular case, $O_{\nu }=1$.

\section{Applications of the orbifold vertex}\label{sec: applications of vertex}

\subsection{The orbifold \DT  crepant resolution conjecture and the orbifold DT/GW correspondence.}

We give a brief description of the \DT Crepant Resolution Conjecture
which will be spelled out in detail in
\cite{Bryan-Cadman-Young-DTCRC}. 

Let $\X $ be an orbifold \CYthree and let $X$ be its coarse space. Let 
\[
Y=\Hilb ^{[\O _{p}]} (\X ) 
\]
be the Hilbert scheme parameterizing substacks in the class $[\O
_{p}]\in F_{0}K (\X )$. $Y$ is birational to $X$ and admits a proper
morphism $\pi :Y\to X$. By a theorem of Bridgeland, King, and Reid
\cite{BKR}, $Y$ is a smooth \CYthree and moreover, there is a
Fourier-Mukai isomorphism \cite{BKR,Chen-Tseng-orbiBKR}
\[
\Phi :K (\X )\to K (Y)
\]
defined by 
\[
E\mapsto Rq_{*}p^{*}E
\]
where 
\[
p:Z\to \X ,\quad q:Z\to Y
\]
are the projections from the universal substack $Z\subset \X \times Y$
onto each factor.

The Fourier-Mukai isomorphism does not respect the filtrations
$F_{\bullet }K (\X )$ and $F_{\bullet }K (Y)$. However, if $\X $ has
transverse $A_{n-1}$ orbifold structure, or more generally satisfies
the Hard Lefschetz condition \cite[Defn~1.1]{Bryan-Graber}
c.f. \cite[Lem~24]{Bryan-Gholampour3}, then the image of $F_{0}K (\X
)$ under $\Phi $ is contained in $F_{1}K (Y)$. We call this image
$F_{exc}K (Y)$; its elements can be represented by formal differences
of sheaves supported on the exceptional fibers of $\pi :Y\to X$. We
define the multi-regular part of $K$-theory, $F_{mr}K (\X )$, to be
the pre-image of $F_{1}K (Y)$ under $\Phi $. Its elements can be
represented by formal differences of sheaves supported in dimension
one where at the generic point of each curve in the support, the
associated representation of the stabilizer of that point is a
multiple of the regular representation. In summary, the following
filtration is respected by the Fourier-Mukai isomorphism
\[
F_{0}K (\X )\subset F_{mr} K (\X ), \quad F_{exc}K (Y)\subset F_{1}K (Y).
\]
We define the exception partition function of $Y$ and the
multi-regular partition function of $\X $ as follows
\begin{align*}
DT_{exc} (Y)& = \sum _{\alpha \in F_{exc}K (Y)} DT_{\alpha } (Y)q^{\alpha }, \\
DT_{mr} (\X )& = \sum _{\alpha \in F_{mr}K (\X )} DT_{\alpha } (Y)q^{\alpha }.
\end{align*}
We then have our \DT crepant resolution conjecture:
\begin{conj}\label{conj: DT CRC}
Let $\X $ be an orbifold \CYthree satisfying the Hard Lefschetz condition. Let $Y$ the the \CY resolution of $X$ described above. Then using $\Phi $ to identify the variables we have an equality
\[
\frac{DT_{mr} (\X )}{DT_{0} (\X )}  = \frac{DT (Y)}{DT_{exc} (Y)}.
\]
\end{conj}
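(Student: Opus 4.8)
The plan is to establish the conjecture in the toric transverse $A_{n-1}$ case — the case treated in \cite{Bryan-Cadman-Young-DTCRC} — by computing both sides of the asserted equality with a vertex formalism and matching them locally under the change of variables induced by $\Phi$. On the orbifold side the tools are Theorem~\ref{thm: main formula for transverse An} and the closed form of the $\znums_n$ vertex in Theorem~\ref{thm: formula for Z_n vertex}; on the resolution side, the ordinary \DT topological vertex of \cite{MNOP1}. Both partition functions are organized as sums over edge assignments on a trivalent web, so the whole comparison reduces to a local statement at each orbifold curve.

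First I would make the resolution geometry explicit. In the transverse $A_{n-1}$ case $\X$ is, transverse to each special curve $C(e)$, an $A_{n(e)-1}$ surface singularity, and $\pi:Y\to X$ resolves it by a chain of $n(e)-1$ exceptional rational curves over $C(e)$. Passing to web diagrams, each vertex $v$ of $\Gamma$ carrying a special edge is replaced by $n(e(v))$ web vertices, and the special edge $e$ is replaced by the web of the resolved $A_{n(e)-1}$ surface times $\cnums$ fibred over $C(e)$, with $n(e)-1$ exceptional $\P^1$'s over each endpoint of $C(e)$. Consequently $F_{exc}K(Y)$ is spanned over $\qnums$ by $[\O_{\pt}]$ and the classes of these exceptional curves, and the derived McKay correspondence \cite{BKR,Chen-Tseng-orbiBKR} identifies each generator $[\O_{p(e)}\otimes\rho_k]$ of $F_0K(\X)$ with an explicit $\qnums$-combination of them; recording this identification yields the substitution expressing the orbifold variables $q_{e,k}$ through the K\"ahler and point variables of $Y$, which is exactly the change of variables appearing in the statement.

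Next I would apply the two vertex formalisms. For $DT_{mr}(\X)$ one restricts the sum in Theorem~\ref{thm: main formula for transverse An} to multi-regular edge assignments; as remarked after that theorem and after Theorem~\ref{thm: formula for Z_n vertex}, multi-regularity kills the signs $(-1)^{\Sigma_{\pi(v)}}$ and $(-1)^{s(\lambda_3(v))}$, collapses $(-1)^{\Ssf_\lambda(e)}$ to $(-1)^{(\tilde m+\delta_0+\delta_\infty)|\lambda|}$, and forces $O_\nu=1$, so $\Vsf^n_{\lambda\mu\nu}$ is given by the relatively clean specialization of Theorem~\ref{thm: formula for Z_n vertex}. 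For $Y$ one uses the MNOP vertex \cite{MNOP1}, with $DT_{exc}(Y)$ the subsum over edge assignments trivial on every edge except the exceptional $\P^1$'s over the special curves (together with the degree-zero part). Fixing partitions on all ``bulk'' edges — those away from the resolved strips — and summing over the remaining edges then writes both $DT_{mr}(\X)$ and $DT(Y)$ as the same sum over bulk edge data, weighted at each former orbifold vertex by an effective \emph{multi-regular $\znums_n$ vertex factor} (combining $\Vsf^n_{\lambda\mu\nu}$ with the special edge term $\Esf_{\lambda(e)}(e)$) in the first case, and by an effective \emph{resolved-strip factor} (the sum of ordinary vertex and edge contributions over the strip) in the second.

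The crux — and what I expect to be the main obstacle — is the local identity asserting that, after the $\Phi$-substitution, the multi-regular $\znums_n$ vertex factor at a former orbifold vertex equals the resolved-strip factor of the corresponding minimal resolution of $A_{n-1}\times\cnums$; its specialization to $\lambda=\mu=\nu=\emptyset$ is precisely the local form of $DT_0(\X)=DT_{exc}(Y)$, so this one identity also supplies the matching of denominators. I see two routes. The combinatorial one is to construct a renormalized-volume-preserving bijection between $n$-colored 3D partitions with multi-regular asymptotics and tuples of ordinary 3D partitions on the resolved strip, i.e.\ the local Bridgeland--King--Reid picture \cite{BKR,Chen-Tseng-orbiBKR} made explicit at the level of box configurations. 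The computational one is to expand both sides in Schur functions — on the orbifold side via Theorem~\ref{thm: formula for Z_n vertex}, on the resolution side by gluing the Okounkov--Reshetikhin--Vafa formula along the strip with the standard rules of \cite{MNOP1} — and reduce the equality to a finite identity among skew Schur functions provable via the Cauchy identity and principal specializations. Once the local identity, and with it the denominator matching, is in hand, multiplying back over all bulk edge assignments yields $DT_{mr}(\X)/DT_0(\X)=DT(Y)/DT_{exc}(Y)$ in the toric transverse $A_{n-1}$ case.
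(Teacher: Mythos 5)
The statement you are asked about is a \emph{conjecture} in this paper: the authors explicitly do not prove it here, they only verify it in two examples (the local football and the local $B\znums_2$ gerbe) and announce that the toric transverse $A_{n-1}$ case will be proved in the forthcoming paper \cite{Bryan-Cadman-Young-DTCRC}. So there is no proof in the paper to compare against, and your text should be judged as a proof attempt on its own. As such it is a strategy outline rather than a proof. The reduction to a local identity at each orbifold curve/vertex is the right organizing idea and matches what the authors say their forthcoming proof will do with the orbifold vertex, but the entire mathematical content lives in that local identity (multi-regular $\znums_n$ vertex factor $=$ resolved-strip factor after the $\Phi$ change of variables), and you do not establish it: you name two possible routes (a box-counting bijection realizing BKR locally, or a skew Schur function computation) and carry out neither. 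Even granting the framework, you would also need to justify that $\Phi$ sends multi-regular curve classes to the announced combinations of exceptional and proper-transform classes, and that the ``bulk'' edge and vertex data really do match term by term including all the signs of Theorem~\ref{thm: main formula for transverse An}; none of this is checked. Finally, the conjecture is stated for arbitrary Hard Lefschetz orbifold \CYthrees, and your argument is confined to the toric transverse $A_{n-1}$ case, so at best it would prove a special case.

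There is also a concrete error. You assert that the specialization of your local identity to $\lambda=\mu=\nu=\emptyset$ ``is precisely the local form of $DT_0(\X)=DT_{exc}(Y)$'' and that this supplies the matching of denominators. That equality is false: by Conjecture~\ref{conj: DT0 = DTexc*DTexcbar/DT0} (the globalization of \cite[Conjecture~A.6]{Bryan-Young}, proved in the transverse $A_{n-1}$ case in \cite{Bryan-Young}), the correct relation is
\[
DT_{0}(\X)=\frac{DT_{exc}(Y)\,\tilde{DT}_{exc}(Y)}{DT_{0}(Y)},
\]
which one can see already in the local model: $\Vsf^{n}_{\emptyset\emptyset\emptyset}$ contains the factors $M(q_a\dotsb q_b,q)M(q_a^{-1}\dotsb q_b^{-1},q)$, i.e.\ both $DT_{exc}(Y)$ and its $q\mapsto q^{-1}$ partner divided by $M(1,q)$ powers, not $DT_{exc}(Y)$ alone. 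So the naive local identity ``orbifold vertex $=$ resolved strip sum'' with equal degree-zero parts cannot hold; the local statement has to be formulated as an identity of ratios (or must carry the $\tilde{DT}_{exc}(Y)/DT_{0}(Y)$ correction through the whole gluing), and your plan as written would fail at exactly this point.
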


The series $DT_{0} (\X )$ and $DT_{exc} (Y)$ are not unrelated. The
conjecture in \cite[Conjecture~A.6]{Bryan-Young} globalizes to
\begin{conj}\label{conj: DT0 = DTexc*DTexcbar/DT0}
Using $\Phi $ to identify variables, we have the equality
\[
DT_{0} (\X ) =\frac{DT_{exc} (Y)\tilde{DT}_{exc} (Y)}{DT_{0} (Y)}
\]
where $\tilde{DT}_{exc} (Y) (q) = DT_{exc} (Y) (q^{-1})$. 
\end{conj}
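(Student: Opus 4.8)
The plan is to reduce the global identity to a product over the toric pieces and then invoke the known local identity. The key observation is that $DT_0(\X)$, $DT_0(Y)$, and $DT_{exc}(Y)$ are all \emph{degenerate} partition functions in the sense that they count configurations supported on (infinitesimal neighborhoods of) the torus fixed locus, and hence they factor into contributions from the vertices and edges of the graph $\Gamma$. Concretely, $DT_0(\X)$ is computed by the vertex formula of Theorem~\ref{thm: main formula for transverse An} restricted to the trivial edge assignment $\lambda(e)=\emptyset$ for all $e$; this leaves only the product of vertex terms $\prod_v \Vsf^{n(e(v))}_{\emptyset\emptyset\emptyset}$, evaluated at the appropriate local variables, after the sign modification $q_{e,0}\mapsto -q_{e,0}$. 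On the resolution side, $Y$ is itself a smooth toric \CYthree whose graph $\Gamma_Y$ is obtained from $\Gamma$ by replacing each transverse $A_{n-1}$ edge with a chain of rational curves (the standard $A_{n-1}$ resolution fan); the degree-zero series $DT_0(Y)=M(-q)^{e(Y)}$ (MacMahon to the Euler characteristic) similarly factors over the vertices of $\Gamma_Y$, and $DT_{exc}(Y)$ factors over those edges of $\Gamma_Y$ lying over the singular locus together with the vertices straddling them.

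The second step is to identify the matching. Under the Fourier--Mukai identification $\Phi$, the variables $q_{e,k}$ on the orbifold side correspond to the Kähler variables $v_{i}$ attached to the exceptional $\P^1$'s over the curve $C(e)$, in the standard way worked out for the local $A_{n-1}\times\cnums$ geometry. So it suffices to prove the identity one fixed curve at a time: for a single transverse $A_{n-1}$ curve $C\subset\X$ with coarse curve of (say) genus zero and the corresponding resolution $\tilde C\to C$, one must show
\[
\Vsf^{n}_{\emptyset\emptyset\emptyset}\big|_{q_{e,0}\mapsto -q_{e,0}}
\;=\;\frac{DT_{exc}(Y)_{C}\,\tilde{DT}_{exc}(Y)_{C}}{DT_0(Y)_{C}},
\]
where the right side is the product of the local (degree-zero and exceptional) contributions along the resolved chain. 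This is exactly the local statement \cite[Conjecture~A.6]{Bryan-Young} (now a theorem in that setting, via the explicit product formula for $\Vsf^n_{\emptyset\emptyset\emptyset}$ given in Theorem~\ref{thm: formula for Z_n vertex}): expanding $\Vsf^{n}_{\emptyset\emptyset\emptyset}=M(1,q)^n\prod_{0<a\le b<n}M(q_a\cdots q_b,q)M(q_a^{-1}\cdots q_b^{-1},q)$ and matching factors, one sees the $M(1,q)^n$ against the $n$ MacMahon factors of $DT_0(Y)$ at the chain vertices, and the $M(q_a\cdots q_b,q)M(q_a^{-1}\cdots q_b^{-1},q)$ pairs against $DT_{exc}(Y)\tilde{DT}_{exc}(Y)$ — the $q\mapsto q^{-1}$ symmetry in $\tilde{DT}_{exc}$ accounting precisely for the pairing of the monomials $q_a\cdots q_b$ with their inverses. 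The sign change $q_{e,0}\mapsto -q_{e,0}$ (equivalently $q\mapsto -q$ in the local MacMahon variable) matches the $M(-q)$ normalization on the $Y$ side.

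The main obstacle I expect is \emph{bookkeeping at the vertices}, not at the edges: a vertex of $\Gamma$ incident to a special edge contributes $\Vsf^{n}_{\emptyset\emptyset\emptyset}$ with its full complement of $M(1,q)^n$ factors, and these must be correctly apportioned between the ``ordinary'' vertex contribution of $Y$ (which is just $M(-q)$ per fixed point of $\Gamma_Y$) and the exceptional-edge contribution. One has to check that summing over all vertices of $\Gamma$ and all edges/vertices of $\Gamma_Y$ the exponents of each MacMahon-type factor agree — in particular that no factor is double-counted where two special edges share a vertex, and that the off-diagonal terms $M(q_a\cdots q_b,q)$ for $a>1$ or $b<n-1$ (the ``interior'' curves of the chain, which are rigid in $\X\times\cnums$ but rigid-or-not depending on the global normal bundle) carry the correct exponents dictated by the degrees $m,m'$. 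Handling the genus and the degrees $m,m'$ of the curve $C$ uniformly will require the edge-term analysis of \S\ref{subsec: the vertex formula}; but since for the degree-zero series the edge assignment is empty, the edge terms $\Esf_{\emptyset}(e)=1$ and this dependence drops out, so the reduction really is to the purely local (normal-bundle-independent) statement above, which is where \cite{Bryan-Young} applies verbatim.
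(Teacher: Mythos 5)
The statement you are proving is a \emph{conjecture} in the paper: no proof is given there, and the only claim made is the footnote asserting that the transverse $A_{n-1}$ case follows from the local theorem of \cite{Bryan-Young} for $\X =[\cnums ^{3}/\znums _{n}]$ by a ``routine'' globalization. Your proposal is essentially an attempt to write out that globalization, and the route you take --- factor $DT_{0}(\X )$ into empty-vertex contributions $\Vsf ^{n}_{\emptyset \emptyset \emptyset }$, factor $DT_{exc}(Y)/DT_{0}(Y)$ over the chains of exceptional fixed fibers in $\Gamma _{Y}$, and match them fixed point by fixed point against the local identity --- is exactly the one the paper intends. So as a sketch of the toric transverse $A_{n-1}$ case your approach is sound.

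However, there are two genuine gaps. First, the conjecture is stated for an arbitrary orbifold \CYthree satisfying the Hard Lefschetz condition; your argument is confined to toric $\X $ with transverse $A_{n-1}$ structure, since it relies on the vertex formula and on the specific $\znums _{n}$ vertex. You should state this restriction explicitly; nothing in your argument addresses non-toric $\X $ or local groups other than $\znums _{n}$. Second, even in the toric $A_{n-1}$ case, the mathematical content is carried entirely by the cited local theorem, and the ``bookkeeping'' you defer is precisely where the work lies: (i) the factorization of $DT_{exc}(Y)/DT_{0}(Y)$ as a product over torus-fixed points of the singular locus requires observing that the exceptional edge assignments over distinct fixed points of a compact singular curve $C(e)$ are independent and that each chain consists of $(0,-2)$-curves regardless of the degrees $m,m'$ of $N_{C(e)/\X }$ (true, by adjunction, but it needs saying); (ii) the pairing of $M(q_{a}\dotsb q_{b},q)$ with $M(q_{a}^{-1}\dotsb q_{b}^{-1},q)$ against $DT_{exc}(Y)\tilde{DT}_{exc}(Y)$ is not literally the substitution $v\mapsto v^{-1}$ but $q\mapsto q^{-1}$, and $M(v,q^{-1})$ equals $M(v^{-1},q)$ only up to a nontrivial prefactor that must be tracked; and (iii) the identification of $q_{e,k}$ with the exceptional curve variables is dictated by the explicit action of $\Phi $ on the classes $[\O _{p(e)}\otimes \rho _{k}]$, which you assert rather than compute. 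None of these is likely to fail, but until they are carried out you have a reduction to \cite{Bryan-Young} plus an unverified matching, not a proof.
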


Conjecture~\ref{conj: DT CRC} will be proven in the toric transverse
$A_{n-1} $ case in \cite{Bryan-Cadman-Young-DTCRC} using the orbifold
vertex developed in this paper. Conjecture~\ref{conj: DT0 =
DTexc*DTexcbar/DT0} was proven in the transverse $A_{n-1}$ case in
\cite{Bryan-Young}\footnote{The theorem in \cite{Bryan-Young} is for
the local case $\X =[\cnums ^{3}/\znums _{n}]$. Conjecture~\ref{conj:
DT0 = DTexc*DTexcbar/DT0} is local in nature; extending from $\X
=[\cnums ^{3}/\znums _{n}]$ to $\X $ global is routine.}

We will see in the examples below that the series 
\[
DT'_{mr} (\X )  = \frac{DT_{mr} (\X )}{DT_{0} (\X )}
\]
which we call the reduced, multi-regular \DT partition function of $\X
$, is equal to the reduced \GW partition function $GW' (\X )$ after a
change of variables and analytic continuation. The general change of
variables can be formulated in terms of Iritani's stacky Mukai vector
\cite{Iritani-integral}, but we will not formulate that explicitly
here.

\subsection{Example: the local football.}

Let 
\[
\X _{a,b} = \Tot (\O (-p_{0})\oplus \O (-p_{\infty })\to \P ^{1}_{a,b})
\]
be the total space of the bundle $\O (-p_{0})\oplus \O (-p_{\infty })$
over the football $\P ^{1}_{a,b}$ which is by definition $\P ^{1}$
with root constructions \cite{Cadman} of order $a$ and $b$ at the
points $p_{0}$ and $p_{\infty }$ respectively. $\X _{a,b}$ is a
natural orbifold generalization of the resolved conifold which is the
special case $\X _{1,1}$. We use our orbifold vertex formalism to
derive a closed formula for the partition function $DT (\X _{a,b})$.

Let $\O (D) = \O (-p_{0})$ and let $\O (D')=\O (-p_{\infty })$. Then
the graph in Figure~\ref{fig: edge e with f,f',g,g'} is the whole
graph of $\X _{a,b}$ and we have
\[
n (f) = a,\quad n (g')=b, \quad n (f')=n (g)=n (e)=1,\quad \tilde {m}=\tilde {m}'=0,
\]
and so
\[
\tilde {m}+\delta _{0}+\delta _{\infty } = 1.
\]

We write our variables as follows:
\begin{align*}
p_{k} &= q_{f,k}, \quad k=0,\dotsc ,a-1\\
r_{k} &= q_{g',k}, \quad k=0,\dotsc ,b-1\\
v&=v_{e}
\end{align*}
and of course 
\[
q=p_{0}\dotsb p_{a-1} = r_{0}\dotsb r_{b-1}.
\]

As in the usual conifold case, the variables $v$ and $q$ keep track of
the degree and the holomorphic Euler characteristic of the curve
respectively. Loosely speaking, the new variables $p_{k}$ and $r_{l}$
can be thought of as keeping track of embedded points on the stacky
locus having representation $k\in \hat{\znums }_{a}$ and $l\in
\hat{\znums }_{b}$ respectively.

Since the orbifold edges, namely $f$ and $g'$, are non-compact, the
edge assignments are multi-regular and so only sign in the formula for
$\underline{DT} (\X _{a,b})$ is the sign $(-1)^{(\tilde {m}+\delta
_{0}+\delta _{\infty }) |\lambda |}$. Thus
\[
\underline{DT} (\X _{a,b}) = \sum _{\lambda } \Esf _{\lambda } \cdot
\Vsf ^{a}_{\lambda \emptyset \emptyset } (p_{0},\dotsc ,p_{a-1})\cdot
\Vsf ^{b}_{ \lambda ' \emptyset \emptyset } (r_{0},\dotsc ,r_{b-1})
\]
where
\[
\Esf _{\lambda } =(-1)^{|\lambda |}\,v^{|\lambda |}\,
q^{|\lambda | } \,
p_{0}^{A_{\lambda } (0,a)}\dotsb p_{a-1}^{A_{\lambda } (a-1,a)} \,
r_{0}^{A_{\lambda' } (0,b)}\dotsb r_{b-1}^{A_{\lambda' } (b-1,b)}.
\]

Applying the formula in Theorem~\ref{thm: formula for Z_n vertex}, we
get
\begin{align*}
\Vsf ^{a}_{ \lambda \emptyset \emptyset } (p)& =\Vsf ^{a}_{\emptyset
\emptyset \emptyset } (p) \cdot p_{0}^{-A_{\lambda } (0,a)}\dotsb p_{a-1}^{-A_{\lambda } (a-1,a)}\cdot \overline{s_{\lambda '}
(\frak{p}_{\bullet })}\\
\Vsf ^{b}_{ \lambda' \emptyset \emptyset } (r)& =\Vsf ^{b}_{\emptyset
\emptyset \emptyset } (r) \cdot r_{0}^{-A_{\lambda' } (0,b)}\dotsb r_{b-1}^{-A_{\lambda' } (b-1,b)}\cdot \overline{s_{\lambda }
(\frak{r}_{\bullet })}
\end{align*}
where 
\begin{align*}
p= (p_{0},\dotsc ,p_{a-1}),& \quad 
\frak{p}_{\bullet } = (1,p_{1},p_{1}p_{2},p_{1}p_{2}p_{3},\dotsc ),\\
r= (r_{0},\dotsc ,r_{b-1}),& \quad 
\frak{r}_{\bullet } = (1,r_{1},r_{1}r_{2},r_{1}r_{2}r_{3},\dotsc ).
\end{align*}

The formula then reads
\[
\underline{DT} (\X _{a,b}) = \Vsf ^{a}_{\emptyset \emptyset \emptyset } (p)\Vsf ^{b}_{\emptyset \emptyset \emptyset } (r)\sum _{\lambda } s_{\lambda '}
(-vq\, \overline{\frak{p}}_{\bullet })\,s_{\lambda}
(\overline{\frak{r}}_{\bullet }).
\]
If we write $Q= (1,q,q^{2},q^{3},\dotsc )$, then we can rewrite the
variables $\frak{p}_{\bullet }$ and $\frak{r}_{\bullet }$ as
\begin{align*}
\frak{p}_{\bullet }&= (Q,p_{1}Q,p_{1}p_{2}Q,\dotsc ,p_{1}\dotsb p_{a-1}Q)\\
\frak{r}_{\bullet }&= (Q,r_{1}Q,r_{1}r_{2}Q,\dotsc ,r_{1}\dotsb r_{b-1}Q)
\end{align*}
and hence
\begin{align*}
\overline{\frak{p}}_{\bullet } &=(Q,p_{a-1}Q,p_{a-1}p_{a-2}Q,\dotsc ,p_{1}\dotsb p_{a-1}Q)\\
\overline{\frak{r}}_{\bullet } &= (Q,r_{b-1}Q,r_{b-1}r_{b-2}Q,\dotsc ,r_{1}\dotsb r_{b-1}Q)
\end{align*}

Using the orthogonality of Schur functions \cite[\S~I.4
(4.3')]{MacDonald} and the fact that
\[
\prod _{i,j} (1+x_{i}y_{i}) = M (w,q)^{-1}
\]
if
\[
(x_{1},x_{2},x_{3},\dotsc )=-wqQ,\quad (y_{1},y_{2},y_{3},\dotsc )=Q,
\]
we get
\[
\underline{DT} (\X _{a,b}) = \Vsf ^{a}_{\emptyset \emptyset \emptyset
} (p)\Vsf ^{b}_{\emptyset \emptyset \emptyset } (r) \prod
_{k=1}^{a}\prod _{l=1}^{b} M (vp_{k}\dotsb p_{a-1}r_{l}\dotsb
r_{b-1},q)^{-1}.
\]
Using the formula for $\Vsf ^{n}_{\emptyset \emptyset \emptyset }$, we
arrive at the following
\begin{prop}\label{prop: DT of local football}
The \DT partition function of the local football $\X
_{a,b}$ is given by
\[
DT (\X _{a,b}) = M (1,-q)^{a+b}\prod _{w\in \mathcal{C}_{a,b}^{+}}M (w,-q)\prod
_{u\in \mathcal{C}_{a,b}^{-}} M (u,-q)^{-1}
\]
where
\begin{align*}
\mathcal{C}_{a,b}^{+}& = \{p_{i}\dotsb p_{j},\,p^{-1}_{i}\dotsb
p^{-1}_{j},\,r_{k}\dotsb r_{l},\,r^{-1}_{k}\dotsb r^{-1}_{l}, 0<i\leq j<a,\,0<k\leq l<b \}\\
\mathcal{C}_{a,b}^{-}& = \{vp_{k}\dotsb p_{a-1}r_{l}\dotsb r_{b-1}: k=1,\dotsc ,a,l=1,\dotsc ,b \}.
\end{align*}
\end{prop}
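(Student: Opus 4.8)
The plan is to combine the expression for $\underline{DT}(\X_{a,b})$ derived just above,
\[
\underline{DT}(\X_{a,b}) = \Vsf^{a}_{\emptyset\emptyset\emptyset}(p)\,\Vsf^{b}_{\emptyset\emptyset\emptyset}(r)\prod_{k=1}^{a}\prod_{l=1}^{b} M\bigl(v\,p_{k}\dotsb p_{a-1}\,r_{l}\dotsb r_{b-1},\,q\bigr)^{-1},
\]
with the closed form for the empty-leg vertex $\Vsf^{n}_{\emptyset\emptyset\emptyset}$ supplied by Theorem~\ref{thm: formula for Z_n vertex}, and then to apply the sign correction of Theorem~\ref{thm: main formula for transverse An} to pass from $\underline{DT}(\X_{a,b})$ to $DT(\X_{a,b})$.

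First I would substitute
\[
\Vsf^{a}_{\emptyset\emptyset\emptyset}(p) = M(1,q)^{a}\prod_{0<i\le j<a} M(p_{i}\dotsb p_{j},\,q)\,M(p_{i}^{-1}\dotsb p_{j}^{-1},\,q)
\]
(the formula for $\Vsf^{n}_{\emptyset\emptyset\emptyset}$ in the variables $(p_{0},\dots,p_{a-1})$ with $q=p_{0}\dotsb p_{a-1}$), together with the analogous identity for $\Vsf^{b}_{\emptyset\emptyset\emptyset}(r)$ in the variables $(r_{0},\dots,r_{b-1})$. Multiplying out the three resulting products of $M$-factors and sorting them by their first slot: the factors with first slot $1$ collect into $M(1,q)^{a+b}$; the first slots $p_{i}\dotsb p_{j},\,p_{i}^{-1}\dotsb p_{j}^{-1}$ for $0<i\le j<a$ together with $r_{k}\dotsb r_{l},\,r_{k}^{-1}\dotsb r_{l}^{-1}$ for $0<k\le l<b$ run exactly over $\mathcal{C}_{a,b}^{+}$, each occurring with exponent $+1$; and the first slots $v\,p_{k}\dotsb p_{a-1}\,r_{l}\dotsb r_{b-1}$ for $1\le k\le a$, $1\le l\le b$ run exactly over $\mathcal{C}_{a,b}^{-}$, each with exponent $-1$. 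This yields
\[
\underline{DT}(\X_{a,b}) = M(1,q)^{a+b}\prod_{w\in\mathcal{C}_{a,b}^{+}} M(w,\,q)\prod_{u\in\mathcal{C}_{a,b}^{-}} M(u,\,q)^{-1}.
\]

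It then remains to apply Theorem~\ref{thm: main formula for transverse An}, which says that $DT(\X_{a,b})$ is obtained from $\underline{DT}(\X_{a,b})$ by attaching a minus sign to each variable $q_{e,0}$ and hence to $q$. For $\X_{a,b}$ the edges are $e,f,f',g,g'$, of which only $f$ and $g'$ are orbifold edges, and using the $K$-theory relations $q=p_{0}\dotsb p_{a-1}=r_{0}\dotsb r_{b-1}$ one checks that this prescription is the unambiguous substitution $q\mapsto-q$, $p_{0}\mapsto-p_{0}$, $r_{0}\mapsto-r_{0}$, with $p_{1},\dots,p_{a-1},r_{1},\dots,r_{b-1},v$ left fixed. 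Since none of the monomials $1$, $w\in\mathcal{C}_{a,b}^{+}$, $u\in\mathcal{C}_{a,b}^{-}$ involves $p_{0}$ or $r_{0}$, this substitution merely replaces $q$ by $-q$ in the second slot of every $M$-factor while fixing every first slot, which is precisely the asserted formula. The point that demands the most care is exactly this last one: verifying that the sign rule of Theorem~\ref{thm: main formula for transverse An}, applied over all five edges at once, is compatible with the relations among $q$, $p_{0}$ and $r_{0}$ and so is well defined; by contrast the expansion of the empty vertices and the identification of the index sets with $\mathcal{C}_{a,b}^{\pm}$ are routine bookkeeping.
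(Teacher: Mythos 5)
Your proposal is correct and follows essentially the same route as the paper: the paper likewise obtains the product formula for $\underline{DT}(\X_{a,b})$, substitutes the closed form of $\Vsf^{n}_{\emptyset\emptyset\emptyset}$ from Theorem~\ref{thm: formula for Z_n vertex}, and then invokes the sign prescription of Theorem~\ref{thm: main formula for transverse An} to replace $q$ by $-q$. Your explicit check that the sign rule amounts to $p_{0}\mapsto -p_{0}$, $r_{0}\mapsto -r_{0}$, $q\mapsto -q$ and leaves every first slot of the $M$-factors untouched is exactly the point the paper leaves implicit.
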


Since the only stacky curves in $\X _{a,b}$ are non-compact, the
reduced multi-regular DT partition function is equal to the usual
reduced partition function:
\[
DT'_{mr} (\X _{a,b}) = DT' (\X _{a,b}) = \prod _{u\in \mathcal{C}^{-}_{a,b}} M (u,-q)^{-1}.
\]
The \CY  resolution $Y\to X$ has a single $(-1,-1)$ curve given by
the proper transform of the football to which are attached two chains
of $(0,-2)$-curves having $a-1$ and $b-1$ components each. Using the
usual (non-orbifold) vertex formalism, one can verify that as
predicted
\[
\frac{DT (Y)}{DT_{exc} (Y)} = \prod _{u\in \mathcal{C}^{-}_{a,b}} M (u, -q)^{-1}
\]
where on $Y$, the variables $p_{1},\dotsc ,p_{a-1}$ and $r_{1},\dotsc
,r_{b-1}$ correspond to the classes of the curves in each of the
chains and $v$ corresponds to the class of the $(-1,-1)$-curve.

\subsection{Example: The local $B\znums _{2}$ gerbe.}\label{subsec: the local BZ2 gerbe}

Another example related to the conifold is the local $B\znums _{2}$
gerbe. In this case, $\X $ is the global quotient of the resolved
conifold $\Tot (\O (-1)\oplus \O (-1)\to \P ^{1})$ by $\znums _{2}$
acting fiberwise by -1. The graph of $\X $ is again given by the one
in Figure~\ref{fig: edge e with f,f',g,g'} but now with $e$ being the
only orbifold edge. The numerical invariants are 
\[
n (e)=2,\,m=\tilde m=m'=\tilde m'=-1,
\]
and the variables are
\[
q_{0}, \,q_{1},\,v_{0},\,v_{1}
\]
corresponding to the $K$-theory classes 
\[
\O _{p}\otimes \rho _{0},\,\,\,\O _{p}\otimes \rho _{1},\,\,\,\O _{C} (-1)\otimes \rho _{0},\,\,\,\O _{C} (-1)\otimes \rho _{1},\,
\]
where $p=p (e)$ is a point on the curve $C=C (e)$.

The \CY resolution $Y\to X$ is given by local $\P ^{1}\times \P ^{1}$,
namely
\[
Y=\Tot \left(\O  (-2,-2)\to \P ^{1}\times \P  ^{1} \right).
\]

Unlike the local football, there is not a nice closed formula for $DT
(\X )$. However, our vertex formula does provides an explicit formula
for the coefficients of the expansion of $DT (\X )$ as a series in
$v_{0}$ and $v_{1}$. For applications to the DT/GW correspondence and
the DT crepant resolution conjecture, we can restrict ourselves to
curve classes whose generic point has a representation which is a
multiple of the regular representation. This corresponds to expanding
$DT (\X )$ about the variable $v=v_{0}v_{1}$, which in the vertex
formula corresponds to summing over multi-regular edge
assignments. Recall that this series is denoted $DT_{mr} (\X )$.  We
compute with the vertex formula:
\[
\underline{DT}_{mr} (\X )  = \sum _{\begin{smallmatrix} \nu \\
|\nu |_{0}=|\nu |_{1} \end{smallmatrix} }\Esf _{\nu } \left(\Vsf
^{2}_{\emptyset \emptyset \nu } (q_{0},q_{1}) \right)^{2}
\]
where
\[
\Esf _{\nu } = v^{|\nu |_{0}} \cdot q_{0}^{\sum _{i,j\in \nu [0,2]}i+j+1} \cdot q_{1}^{\sum _{i,j\in \nu [1,2]}i+j+1}
\]
and
\[
\Vsf ^{2}_{\emptyset \emptyset \nu } = \Vsf ^{2}_{\emptyset \emptyset \emptyset } \prod _{j,i\in \nu' }\frac{1}{1-q_{0}^{h^{0}_{\nu' } (j,i)}q_{1}^{h^{1}_{\nu' } (j,i)}}
\]
Noting that $\underline{DT}_{0} (\X )= \left(\Vsf ^{2}_{\emptyset
\emptyset \emptyset } \right)^{2}$, we get
\[
\frac{\underline{DT}_{mr} (\X )}{\underline{DT}_{0} (\X )} = \sum
_{d=0}^{\infty }v^{d}\sum _{\begin{smallmatrix} \nu \\
 |\nu |_{0}=|\nu
|_{1}=d\end{smallmatrix}}\frac{q_{0}^{\sum _{i,j\in \nu [0,2]}i+j+1}q_{1}^{\sum
_{i,j\in \nu [1,2]}i+j+1}}{\prod _{j,i\in \nu' }\left(1-q_{0}^{h_{\nu' }^{0} (j,i)} q_{1}^{h_{\nu' }^{1} (j,i)} \right)^{2}}
\]

We expand the above to order 3 in $v$. The linear term corresponds to
the two partitions of size 2 and the quadratic term corresponds to the
5 partitions of size 4. The rational function in the $\nu $ sum is
invariant under $\nu \leftrightarrow \nu '$ and is easily evaluated:

\begin{multline*}
1+\,v\frac{2q_{0}q_{1}^{2}}{(1-q_{0}q_{1})^{2} (1-q_{1})^{2}}\,\\
+\,v^{2}
\Bigg\{
\frac{2q_{0}^{4}q_{1}^{6}}{(1-q_{0}^{2}q_{1}^{2})^{2}(1-q_{0}q_{1}^{2})^{2}(1-q_{0}q_{1})^{2}(1-q_{1})^{2}}+\\
\frac{2q_{0}^{4}q_{1}^{4}}{(1-q_{0}^{2}q_{1}^{2})^{2}(1-q_{0}q_{1})^{2}(1-q_{0})^{2}(1-q_{1})^{2}}
\\
+\frac{q_{0}^{4}q_{1}^{4}}{(1-q_{0}q_{1}^{2})^{2}(1-q_{0}q_{1})^{4}(1-q_{0})^{2}}\Bigg\} + O (v^{3})
\end{multline*}

As predicted by Conjecture~\ref{conj: DT CRC}, 
the above series (after replacing $q_{0}$ with $-q_{0}$) matches with ${DT} (Y)/{DT}_{exc}
(Y)$ under the change of variables
\[
q=q_{0}q_{1},\quad v_{s}=q_{1}v,\quad v_{f}=q_{1}.
\]
Here $v_{s}$ and $v_{f}$ are the variables associated to the
generating curve classes in $\P ^{1}\times \P ^{1}$ (the section and
fiber classes).

We note that it is noticeably more efficient to compute with the
orbifold vertex than to compute on local $\P ^{1}\times \P ^{1}$.

The \GW partition function of $\X $ is obtained from $DT_{mr} (\X
)/DT_{0} (\X )$ by the change of variables
\[
q_{0}q_{1} = -e^{i\lambda },\quad q_{1} = -e^{ix},\quad v=w,
\]
where $\lambda $ is the genus parameter, $w$ is the degree parameter,
and $x$ indexes the number of marked $B\znums _{2}$ points.  So for
example, if $GW_{1,g,n} (\X )$ denotes the \GW invariant of degree 1
maps whose domain curve is genus $g$ with $n$ marked $B\znums _{2}$
points, then
\[
\sum _{n,g}GW_{1,g,n} (\X ) \lambda ^{2g-2}x^{n} = \frac{1}{2}
\left(2\sin \frac{\lambda }{2} \right)^{-2}\sec^{2}\frac{x}{2}.
\]

\section{Proof of Theorem~\ref{thm: main formula for transverse An} }\label{sec: proof of main thm}

\subsection{Overview} Our computation of the \DT
partition function of $\X $ uses a localization technique. The action
of the torus $T$ on $\X$ induces a $T$ action on $\Hilb ^{\alpha } (\X
)$ with isolated fixed points. The fixed points are given by substacks
of $\X $ defined by monomial ideals on each chart and these correspond
to 3D partitions at each vertex. We use a theorem of Behrend and
Fantechi \cite[Theorem~3.4]{Behrend-Fantechi08} which says that the
weighted Euler characteristic of $\Hilb ^{\alpha } (\X )$ is given by
a signed count of fixed points:
\begin{align*}
DT_{\alpha } (\X ) &= e (\Hilb ^{\alpha } (\X ), \nu _{\Hilb ^{\alpha
}} (\X ))\\
&=\sum _{p\in \Hilb ^{\alpha } (\X )^{T}} (-1) ^{\dim T_{p}\Hilb ^{\alpha } (\X )}. 
\end{align*}
The above formula is also apparent from the point of view of virtual
localization as used in \cite{MNOP1}, although we avoid
non-compactness issues by the use of weighted Euler characteristics.

Thus the main two tasks are the following.
\begin{enumerate}
\item A combinatorial description of the $T$-fixed substacks and
the computation of the $K$-theory class of a given $T$-fixed
substack.
\item The computation of the parity of the tangent space to a fixed
point in order to determine the sign.
\end{enumerate}
Our approach to the above two tasks are quite different from
\cite{MNOP1} whose techniques do not readily generalize to the
orbifold case. In fact our approach provides a substantial
simplification in the non-orbifold case over the proof of
\cite{MNOP1}; in particular, we avoid the need for the combinatorial
analysis in \cite[\S~4.11]{MNOP1}.

To handle (1), we find a $K$-theory decomposition of
$T$-invariant substacks into edge and vertex terms, and we use well
chosen functions on $K$-theory to write the class in our basis. This is carried out in \S~\ref{subsec: K-Theory decomp}. 

To handle (2), we exploit $T$-equivariant Serre duality and the Euler
pairing in $K$-theory to determine the vertex and edge contributions
to the signs. This is quite involved and is carried out in
\S~\ref{sec: signs}.

Our techniques yield a vertex formalism for an \emph{arbitrary}
orbifold toric \CYthree $\X $ (not just the transverse $A_{n-1}$
case). Namely, we derive a formula of the form given by
equation~\eqref{eqn: general vertex formula} where $\Esf (e)$ is a
signed monomial depending on $\lambda (e)$ and the local geometry of
$C (e)$ and $\hat{\Vsf }^{G}_{\lambda \mu \nu }$ is the generating
function for $3D$ partitions asymptotic to $(\lambda, \mu, \nu )$,
colored by representations of $G$ (as in equation~\eqref{eqn:
definition of VG}), except counted with the sign rule given in
Theorem~\ref{thm: general sign formula} (see Remark~\ref{rem: general
vertex formula}). Although the formula is completely combinatorial and
can be made explicit, it is not as computational effective as the
formula for the transverse $A_{n-1}$ case because we do not have an
explicit formula for the general orbifold vertex $\hat{\Vsf
}^{G}_{\lambda \mu \nu }$ as we do in the transverse $A_{n-1}$
case. We also expect there to an explicit formula for the vertex for
$G = \znums _{2}\times \znums _{2}$ but not in general.

\subsection{The $K$-theory decomposition} \label{subsec: K-Theory decomp}
\begin{lemma}\label{lem: fixed points in bijection with partitions}
Torus fixed points in
\[
\bigsqcup _{\alpha \in F_{1}K (\X )} \Hilb ^{\alpha } (\X )
\]
are isolated and in bijective correspondence with sets $\{\lambda
(e),\pi (v) \}$ where $\lambda (e)$ is an edge assignment
(Definition~\ref{defn: edge assignment}) and $\{\pi (v) : v\in
\Vertices \}$ is a collection of 3D-partitions such that $\pi (v)$
is asymptotic to $(\lambda _{1} (v),\lambda _{2} (v),\lambda _{3}
(v))$.
\end{lemma}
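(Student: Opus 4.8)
The plan is to set up $T$-equivariant local coordinates on $\X$ and analyze $T$-fixed closed substacks chart by chart. Since $\X$ is a toric orbifold \CYthree, it is covered by $T$-invariant affine charts, one for each vertex $v$ of $\Gamma$, of the form $[\cnums^3/G_v]$ with $G_v$ a finite abelian group, and one for each edge $e$ of the form $\cnums\times[\cnums^2/\znums_{n(e)}]$ (with the stacky factor when $n(e)>1$). A $T$-fixed substack $Z\subset\X$ must be defined on each vertex chart by a $T$-invariant (hence monomial) ideal in the relevant coordinate ring; because the ambient dimension is three, such an ideal corresponds exactly to a (possibly infinite) subset $\pi(v)\subset(\znums_{\geq0})^3$ that is closed under the partial order — i.e.\ the complement of a 3D partition in the sense of Definition~\ref{defn: 3D partition asympt to (a,b,c)}. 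The requirement that $[\O_Z]\in F_1K(\X)$ forces $\pi(v)$ to be finite modulo its three legs, i.e.\ the three monomial ideals obtained by localizing away from each coordinate hyperplane must each define a one-dimensional (or empty) $T$-fixed subscheme, which is precisely a pair of 2D partitions along that axis.

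\medskip
The key steps, in order: (i) identify the $T$-invariant affine cover and record, on each edge chart, that a $T$-fixed one-dimensional substack is given by a monomial ideal in $\cnums[x][[y,z]]$ supported at $y=z=0$, equivalently a 2D partition $\lambda(e)$; vanishing along noncompact edges is forced because $[\O_Z]$ has compact support, giving exactly the notion of an edge assignment (Definition~\ref{defn: edge assignment}). (ii) On each vertex chart, show $T$-fixed substacks $\leftrightarrow$ order ideals $\pi(v)$ with finitely many boxes off the legs; here one uses that $G_v\subset SL(3)$ acts diagonally, so the $G_v$-action just refines the coloring but does not change the monomial-ideal description. (iii) Glue: a substack of $\X$ is the same as compatible substacks on each chart, and the compatibility on the overlap $U_v\cap U_{v'}\supset$ (edge chart of $e$ joining $v,v'$) says exactly that the asymptotics of $\pi(v)$ and $\pi(v')$ along the common axis both equal $\lambda(e)$ (up to the transpose coming from the orientation convention, as in the paragraph before the statement). (iv) Conclude that the data $\{\lambda(e),\pi(v)\}$ with $\pi(v)$ asymptotic to $(\lambda_1(v),\lambda_2(v),\lambda_3(v))$ determines and is determined by the $T$-fixed point, and that since each such ideal is rigid under the torus the fixed points are isolated.

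\medskip
The main obstacle I expect is step (iii), the careful bookkeeping of the gluing and orientation conventions: one must check that the asymptotic 2D partition read off from $\pi(v)$ along the edge $e$, in the coordinates adapted to $v$, matches the one read off from $\pi(v')$ in the coordinates adapted to $v'$, and that the ``$\lambda$ vs.\ $\lambda'$'' transpose bookkeeping (governed by whether $e$ points outward from or inward to the vertex, Definition~\ref{defn: edge assignment} ff.) is consistent around the whole graph. A secondary subtlety is verifying that finiteness of $[\O_Z]$ in $F_1K(\X)$ is equivalent to the renormalized-volume finiteness built into the definition of a 3D partition asymptotic to a triple — i.e.\ that no ``extra'' two-dimensional or higher-dimensional components can appear — which follows because any $T$-fixed substack of dimension $\geq 2$ would contain a torus-invariant divisor and hence fail to be compactly supported on a noncompact $\X$, or would violate the prescribed class $\alpha\in F_1K(\X)$. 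Once these compatibilities are pinned down, the bijection and isolatedness are immediate.
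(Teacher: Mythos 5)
Your proposal is correct and follows essentially the same route as the paper: identify $T$-fixed substacks with monomial ideals on the vertex and edge charts (the paper phrases this globally via the nonvanishing of the compositions $\O_{\X}(-iD_{1}(v)-jD_{2}(v)-kD_{3}(v))\to\O_{\X}\to\O_{Y}$ at $p(v)$ and of $\O_{\X}(-iD(e)-jD'(e))\to\O_{\X}\to\O_{Y}$ at a generic point of $C(e)$, which is exactly your local monomial-ideal description written with divisors), and then glue using the compatibility of the edge asymptotics. The gluing/orientation bookkeeping you flag as the main obstacle is handled in the paper simply by fixing the orientation conventions of \S\ref{subsec: conventions for An-1 toric orbifolds} once and for all, so no further argument is needed there.
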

\begin{proof}
Fix an orientation of $\Gamma $, the graph associated to $\X $.
Recall that $D (e)$ and $D' (e)$ are the invariant divisors incident
to $C (e)$ corresponding to the regions to the left and right of $e$
respectively. Recall that $(D_{1} (e),D_{2} (e),D_{3} (e))$ are the
invariant divisors incident to $p (e)$ corresponding to the regions
opposite of $(e_{1} (v),e_{2} (v),e_{3} (v))$ from $v$. 

Let $Y\subset \X $ be a torus invariant substack of dimension at most
one. We associate to $Y$ a collection $\{\lambda (e),\pi (v) \}$ as
follows. Define $\lambda (e)$ to be the set $(i,j)$ such that the
composition
\[
\O _{\X } (-iD (e)-jD' (e))\to \O _{\X } \to \O _{Y}
\]
is non-zero at a general point of $C (e)$.

Similarly, we define $\pi (v)$ to be the set $(i,j,k)$ such that the
composition
\[
\O _{\X } (-iD_{1} (v)-jD_{2} (v)-kD_{3} (v)) \to \O _{\X }\to \O _{Y}
\]
is non-zero at $p (v)$. The fact that $\pi (v)$ is asymptotic to
$(\lambda _{1} (v),\lambda _{2} (v),\lambda _{3} (v))$ follows easily
from the construction and our conventions.

Conversely, given $\{\lambda (e), \pi (v) \}$, an edge assignment
$\lambda (e)$ and a set $\{\pi (v) \}$ of $3D$-partitions asymptotic
to $(\lambda _{1} (v),\lambda _{2} (v),\lambda _{3} (v))$, we
construct a torus invariant substack $Y\subset \X $ as follows. Note
that the edge assignment is uniquely determined by the $3D$-partitions
$\{\pi (v) \}$. For each $v$, consider the ideal sheaf $I_{\pi
(v)}\subset \O _{\X }$ generated by the image of the maps
\[
\O _{\X } (-iD_{1} (v)-jD _{2} (v) -kD_{3} (v))\to \O _{\X }
\]
for $(i,j,k)$ \emph{not} contained in $\pi (v)$. This determines a
torus invariant substack in the torus open invariant neighborhood of
the point $p (v)$ for each $v$. By the compatibility of the edge
partitions, these substacks agree on the overlaps and thus determine a
global substack.
\end{proof}

\begin{remark}\label{rem: elements of partitions as divisors}
It will be convenient notation to identify an element $(i,j,k)\in \pi
(v)$ with the corresponding divisor. Thus if we write $D\in \pi (v)$
we will mean 
\[
D=iD_{1} (v)+jD_{2} (v)+kD_{3} (v)
\]
for the corresponding $(i,j,k)\in \pi (v)$. Similarly, $D\in \lambda
(e)$ means 
\[
D=iD (e)+jD' (e)
\]
for the corresponding $(i,j)\in \lambda (e)$. Our orientation
conventions guarantee consistency between the divisors associated to
elements of edge partitions and the divisors associated to elements of
the legs of vertex partitions.
\end{remark}

We write the $K$-theory class of the structure sheaf of a torus
invariant substack as a sum over edge and vertex terms:

\begin{prop}\label{prop: K-theory decomposition of O_Y}
Let $Y\subset \X $ be a $T$-invariant substack of dimension no greater
than one. Let $\{\lambda (e),\pi (v) \}$ be the corresponding set of
vertex and edge partitions. Then in $T$-equivariant compactly
supported $K$-theory we have
\[
\O _{Y} =\sum _{e\in \Edges } \sum _{D\in \lambda (e)} \O _{C (e)}
(-D) +\sum _{v\in \Vertices }\sum _{D\in \pi (v)} \xi _{\pi (v)} (D)\O
_{p (v)} (-D).
\]
\end{prop}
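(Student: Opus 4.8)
The plan is to reduce this global identity to a purely local statement on each toric chart by a $K$-theoretic localization argument, and then to prove the local statement by inclusion--exclusion over the three legs of the relevant $3D$ partition. Throughout I work in $T$-equivariant $K$-theory.

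\textit{Step 1: agreement away from the fixed points.} Set $\beta=\sum_{e\in\Edges}\sum_{D\in\lambda(e)}[\O_{C(e)}(-D)]$; this is a finite sum, since $\lambda(e)=\emptyset$ on non-compact edges, so every curve $C(e)$ that occurs is compact. Let $U=\X\setminus\{p(v):v\in\Vertices\}$ be the complement of the torus fixed points. By $T$-invariance the zero-dimensional part of $Y$ is supported on $\X^{T}$, so over $U$ the substack $Y$ is, along each $C(e)$, invariant under the residual one-dimensional torus action, hence is literally the cylinder with cross-section $\lambda(e)$; the monomial filtration of that cylinder has associated graded $\bigoplus_{D\in\lambda(e)}\O_{C(e)}(-D)$. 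Hence $[\O_{Y}]|_{U}=\beta|_{U}$. Applying the localization exact sequence $\bigoplus_{v}K^{T}(p(v))\to K^{T}(\X)\to K^{T}(U)\to 0$ gives $[\O_{Y}]-\beta=\sum_{v}\iota_{v*}(\alpha_{v})$ for some classes $\alpha_{v}\in K^{T}(p(v))$, and these are uniquely determined: after restriction to the chart $U_{v}\cong[\cnums^{3}/G_{v}]$ the pullback $\iota_{v}^{*}$ is an isomorphism (homotopy invariance) and $\iota_{v}^{*}\iota_{v*}$ is multiplication by the $K$-theoretic Euler class of the normal bundle of $p(v)$, a non-zero-divisor, so $\iota_{v*}$ is injective on $U_{v}$.

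\textit{Step 2: the local computation.} Flat pullback to the chart $U_{v}$ kills the contributions of $w\neq v$, and using the orientation conventions of Definition~\ref{defn: orientation on a graph} and the divisor bookkeeping of Remark~\ref{rem: elements of partitions as divisors} it remains to prove, in $K^{T}([\cnums^{3}/G_{v}])$,
\[
[\O_{Y\cap U_{v}}]-\sum_{i=1}^{3}\;\sum_{(a,b)\in\lambda_{i}(v)}[\O_{\ell_{i}}(-D)]\;=\;\sum_{D\in\pi(v)}\xi_{\pi(v)}(D)\,[\O_{p(v)}(-D)],
\]
where $\ell_{i}=C(e_{i}(v))\cap U_{v}$ is the $i$-th coordinate axis and $Y\cap U_{v}=V(I_{\pi(v)})$ is the monomial subscheme of Lemma~\ref{lem: fixed points in bijection with partitions}. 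The key point is that $I_{\pi(v)}$ is contained in the ideal of each of the three coordinate cylinders $Z_{i}$ (a monomial outside $\pi(v)$ lies outside the corresponding leg, by the asymptotics in Definition~\ref{defn: 3D partition asympt to (a,b,c)}), so $\O_{Y\cap U_{v}}$ surjects onto $\O_{Z_{1}\cup Z_{2}\cup Z_{3}}$ with kernel a finite-length module at the origin. Expanding $[\O_{Z_{1}\cup Z_{2}\cup Z_{3}}]$ by inclusion--exclusion over the three cylinders, and using the monomial filtration $[\O_{Z_{i}}]=\sum_{(a,b)\in\lambda_{i}(v)}[\O_{\ell_{i}}(-D)]$, turns the left-hand side into an explicit finite signed combination of skyscraper classes at $p(v)$. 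Matching this combination with $\sum_{D\in\pi(v)}\xi_{\pi(v)}(D)[\O_{p(v)}(-D)]$ is exactly the identity $\#\{\text{legs containing }D\}=\sum_{i}\mathbf 1[D\in\text{leg}_{i}]$ fed term-by-term into the definition $\xi_{\pi(v)}=1-\#\{\text{legs}\}$; the infinite monomial sums attached to the legs cancel because $\xi_{\pi(v)}$ vanishes off a finite set.

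\textit{Main obstacle.} Everything in Step 1 (the localization sequence, homotopy invariance, the cylinder filtration) is formal. The real work is the local computation of Step 2: one must track the equivariant weights and signs of the skyscraper classes $[\O_{p(v)}(-D)]$ carefully (this is the purpose of the orientation conventions and of Remark~\ref{rem: elements of partitions as divisors}), and one must check that the several infinite monomial sums coming from the three legs genuinely telescope to the finite Laurent-polynomial answer $\sum_{D\in\pi(v)}\xi_{\pi(v)}(D)(\cdots)$, rather than merely agreeing at the level of formal characters.
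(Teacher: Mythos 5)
Your proposal is correct, but it takes a genuinely different route from the paper's. The paper argues globally and elementarily: it introduces the auxiliary substack $Z_{N}$ whose vertex partitions are all the $N\times N\times N$ cube $\Nsf$, forms the stack-theoretic union $Y_{N}=Y\cup Z_{N}$, computes the two differences $I_{Y}-I_{Y_{N}}$ and $I_{Z_{N}}-I_{Y_{N}}$ directly, and recovers the coefficient $\xi_{\pi(v)}(D)=1-\#\{\text{legs containing }D\}$ by cancelling the cube boxes against the endpoint skyscrapers that arise when each $\O_{C(e)}(-D)$ is rewritten as its twist by $-ND_{0}(e)-ND_{\infty}(e)$ plus point classes; this never leaves the Grothendieck group of sheaves on $\X$ itself and uses nothing beyond short exact sequences. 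You instead reduce to a chartwise statement via the localization sequence for equivariant $G$-theory together with injectivity of $\iota_{v*}$ (the $K$-theoretic Euler class being a non-zero-divisor in $R(T\times G_{v})$), and then extract $\xi$ from the three-fold inclusion--exclusion $1-L=\mathbf{1}[L=0]-\binom{L}{2}+\binom{L}{3}$ applied to the union of the coordinate cylinders; this is arguably the more transparent explanation of where the renormalization $\xi$ comes from, and it cleanly separates the curve contributions (seen on $U$) from the vertex corrections. The cost is the extra $K$-theoretic input (localization and homotopy invariance for equivariant $G$-theory of the orbifold charts, the non-zero-divisor check in $R(T\times G_{v})$), plus one wrinkle worth acknowledging explicitly: restriction to $U$ leaves compactly supported $K$-theory, so strictly speaking you verify the identity after mapping to the full equivariant Grothendieck group of coherent sheaves. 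This suffices for every application of the proposition in the paper, since the class of $\O_{Y}$ is only ever consumed through numerical functionals such as $\chi(E\otimes-)$ and the Toen operator, but it is not literally the group in which the statement is phrased; the paper's cube argument avoids the issue because all of its intermediate classes are differences of structure sheaves of proper substacks.
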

\begin{proof}
For any $N\in \nnums $ let $\Nsf$ be the cubical $3D$-partition of
size $N$, that is $\Nsf =\{(i,j,k) : 0\leq i,j,k<N\}$. Let $Z_{N}$ be
the $T$-invariant substack having empty edge partitions and vertex
partitions all equal to $\Nsf $. Let $Y_{N}$ be the stack theoretic
union of $Y$ and $Z_{N}$. Choose $N$ large enough so that for each
$v$, $\pi (v)$ is contained in the the union of the legs of $\pi (v)$
with $\Nsf $. 

We have embeddings $Y\subset Y_{N}$ and $Z_{N}\subset Y_{N}$ from
which we get the following $K$-theory equalities:
\begin{align*}
I_{Y} - I_{Y_{N}} &= \sum _{v\in \Vertices } \sum _{\begin{smallmatrix} D\in \Nsf \\
D\not \in \pi (v) \end{smallmatrix}} \O_{p (v)} (-D)\\
I_{Z_{N}} - I_{Y_{N}} &= \sum _{e\in \Edges } \sum _{D\in \lambda (e)} \O _{C (e)} (-D-ND_{0} (e)-ND_{\infty } (e)).
\end{align*}
For any $D\in \lambda (e)$, we have
\begin{multline*}
\O _{C (e)} (-D) = \O _{C (e)} (-D-ND_{0} (e)-ND_{\infty } (e ))\\
+\sum _{k=0}^{N-1} \O _{p_{0} (e)} (-D-kD_{0} (e)) +\O _{p_{\infty }
(e)} (-D-kD_{\infty } (e)).
\end{multline*}
We note that if $v$ is the initial vertex of $e$, then
\[
\sum _{D\in \lambda (e)} \sum _{k=0}^{N-1} \O _{p_{0} (e)} (-D-kD_{0}
(e)) = \sum _{D\in \Nsf \cap \operatorname{Leg}_{e}\pi (v) } \O _{p
(v)} (-D)
\]
where $\operatorname{Leg}_{e}\pi (v)$ is the leg of $\pi (v)$ in the
$e$ direction. The similar statement holds for $p_{\infty } (e)$.

Putting it all together we get:
\begin{align*}
\O _{Y}=&\,\,  O_{Z_{N}} - (I_{Y}-I_{Y_{N}}) + (I_{Z_{N}}-I_{Y_{N}})\\
=&\sum _{v\in \Vertices } \left(\sum _{D\in \Nsf }\O _{p (v)} (-D) - \sum _{\begin{smallmatrix} D\in \Nsf \\
D\not \in \pi (v) \end{smallmatrix}} \O _{p (v)} (-D) \right)\\
&+\sum _{e\in \Edges }\sum _{D\in \lambda (e)} \O _{C (e)} (-D-ND_{0} (e)-ND_{\infty } (e))\\
=&\sum _{v\in \Vertices } \sum _{D\in \pi (v)\cap \Nsf } \O _{p (v)} (-D) \\
&\!\!\!\!\!\!\!\!\!\!\!\!\!\!\!\!\!\!\!\!\!\!\!\!+\sum _{e\in \Edges } \sum _{D\in \lambda (e)} \left(\O _{C (e)} (-D) -\sum _{k=0}^{N-1} (\O _{p_{0} (e)} (-D-kD_{0} (e))+\O _{p_{\infty } (e)} (-D-kD_{\infty } (e))) \right)\\
=&\sum _{v\in \Vertices }\sum _{D\in \pi (v)} \xi_{\pi (v)} (D)\O _{p
(v)} (-D) +\sum _{e\in \Edges }\sum _{D\in \lambda (e)} \O _{C (e)}
(-D)
\end{align*}
which proves the proposition.
\end{proof}

In the case where $\X $ has transverse $A_{n-1}$ orbifold structure,
we can further refine our $K$-theory decomposition of $\O_{Y}$ into
the basis described in \S~\ref{subsec: generators for K}. In the below
lemmas, we write the decompositions of the vertex and the edge terms.
\begin{lemma}\label{lem: vertex K-theory decomp}
The vertex terms decompose as follows
\[
\sum _{D\in \pi (v)}\O _{p (v)} (-D) 
= \begin{cases}
\sum _{i,j,k\in \pi (v)} [\O _{p (v)}\otimes \rho _{i-j}] &\text{if $e (v)$ is oriented outward,}\\
\sum _{i,j,k\in \pi (v)} [\O _{p (v)}\otimes \rho _{j-i}] &\text{if $e (v)$ is oriented inward.}
\end{cases}
\]
\end{lemma}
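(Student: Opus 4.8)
The plan is to compute the $T$-equivariant $K$-theory class of each structure sheaf $\O_{p(v)}(-D)$ in terms of the representation-labeled basis of the fibre, where $D = iD_1(v) + jD_2(v) + kD_3(v)$ runs over the elements of $\pi(v)$ (per Remark~\ref{rem: elements of partitions as divisors}). The key point is purely local: near the vertex $v$, the substack looks like a monomial ideal in $[\cnums^3/\znums_{n}]$ where the $\znums_n = \znums_{n(e(v))}$ action on $\cnums^3$ has weights $(1,-1,0)$ — this is the transverse $A_{n-1}$ convention (weights $(1,-1,0)$ from \S\ref{subsec: 3D parts, 2D parts, and the vertex}) combined with our convention that the special edge $e(v)$ is $e_3(v)$, which is the fixed direction of the $\znums_n$-action. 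So a box $(i,j,k)$ contributes a skyscraper at $p(v) \cong B\znums_{n}$ whose $\znums_n$-character is determined by the monomial $x^i y^j z^k$, i.e. by $i - j \bmod n$.

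**First I would** pin down the indexing convention. By the definition in \S\ref{subsec: generators for K}, the representations $\rho_k$ of $\znums_{n(e)}$ are indexed so that $\O_{p(e)}(-kD(e)) \cong \O_{p(e)} \otimes \rho_k$. I need to translate this into the $(i,j,k)$ coordinates at the vertex. With the orientation conventions of Figure~\ref{fig: edge e with f,f',g,g'} and Definition~\ref{defn: orientation on a graph}, the divisors $D_1(v), D_2(v)$ restricted to a neighborhood of $p(v)$ correspond (up to the orientation of $e(v)$) to $D(e)$ and $D'(e)$; moving $-D_1(v)$ twists by $\rho_1$ and moving $-D_2(v)$ twists by $\rho_{-1}$, since these two normal directions carry opposite $\znums_n$-weights while the $D_3(v)$ direction along $C(e)$ is fixed. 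Hence $\O_{p(v)}(-iD_1(v) - jD_2(v) - kD_3(v)) \cong \O_{p(v)} \otimes \rho_{i-j}$. This already gives the outward case; the inward case is the mirror, coming from Remark~\ref{rem: changing the orientation of an edge}, where switching the orientation of the special edge swaps $q_{e,k} \leftrightarrow q_{e,n-k}$, equivalently $\rho_k \leftrightarrow \rho_{-k}$, converting $\rho_{i-j}$ into $\rho_{j-i}$.

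**The main obstacle** I expect is bookkeeping the sign/orientation conventions correctly: which of the two transverse directions at the vertex carries weight $+1$ versus $-1$, and how the choice of orientation of $e(v)$ (and the induced ordering $(e_1(v), e_2(v), e_3(v))$ being compatible with the counterclockwise cyclic order) interacts with the rule $\O_{p(e)}(-kD(e)) \cong \O_{p(e)} \otimes \rho_k$. This is a compatibility check between the "color $= i - j \bmod n$" rule used in the combinatorial definition of $\Vsf^n_{\lambda\mu\nu}$ and the geometric $K$-theory indexing; once the single sign is fixed consistently, the two cases of the lemma drop out immediately. I would verify the sign on a small explicit example — a single box at the origin, or the edge $e$ alone in $\X_{a,b}$ — to make sure the exponents match those appearing in $\Esf_\lambda(e)$ and in Theorem~\ref{thm: main formula for transverse An}. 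The rest is a direct summation over $(i,j,k) \in \pi(v)$ with no cancellation, so the displayed formula follows term by term.
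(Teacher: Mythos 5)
Your proposal is correct and matches the paper's argument, which is literally the one-line observation that the claim ``follows immediately from our conventions and our choice of the indexing of the representations $\rho_k$'' — you have simply unpacked that line by identifying $D_1(v),D_2(v)$ with $D(e),D'(e)$ (up to the orientation of the special edge) and invoking $\O_{p(e)}(-kD(e))\cong\O_{p(e)}\otimes\rho_k$ together with triviality of the action along the $D_3(v)$ direction. The orientation bookkeeping you flag as the main obstacle is indeed the entire content of the lemma, and your resolution of it is the intended one.
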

\begin{proof}
This follows immediately from our conventions (\S~\ref{subsec:
conventions for An-1 toric orbifolds}) and our choice of the indexing
of the representations $\rho _{k}$ of $\znums _{n (e (v))}$
(\S~\ref{subsec: generators for K}).
\end{proof}

\begin{prop}\label{prop: K-theory decomp of an edge}
Let $e$ be a compact edge corresponding to a curve $C=C (e)$ and let
$\lambda =\lambda (e)$ be an edge partition. Let $D=D (e)$, $D'=D'
(e)$ and let $m=\deg (D)$, $m'=\deg (D')$. Assume that $e$ and its
incident edges $f$, $f'$, $g$, $g'$ are oriented as in
figure~\ref{fig: edge e with f,f',g,g'}. Let $n=n (e)$, $a=n (f)$,
$a'=n (f')$, $b=n (g)$, and $b'=n (g')$, then
\begin{align*}
\sum _{i,j\in \lambda }\O _{C} (-iD-jD') =&\quad 
\sum _{k=0}^{n-1}\,\,|\lambda |_{k}\cdot [\O _{C} (-1)\otimes \rho _{k}]\\
&+\sum _{k=0}^{n-1}C^{\lambda }_{\tilde {m},\tilde {m}'}[k,n]\cdot [\O _{p (e)}\otimes \rho _{k}]\\
&+\delta _{0}\sum _{k=0}^{a-1}A_{\lambda\,\, } (k,a\,\,)\cdot [\O _{p (f\,\,)}\otimes \rho
_{k}]\\
&+\delta _{0}'\sum _{k=0}^{a'-1}A_{\lambda' } (k,a')\cdot [\O _{p (f')}\otimes
\rho _{k}]\\
&+\delta _{\infty }\sum _{k=0}^{b-1}A_{\lambda\,\, } (k,b\,\,)\cdot [\O _{p (g\,\,)}\otimes \rho
_{k}]\\
&+\delta _{\infty }'\sum _{k=0}^{b'-1}A_{\lambda' } (k,b')\cdot [\O _{p (g')}\otimes
\rho _{k}].
\end{align*}
\end{prop}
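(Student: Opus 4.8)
The plan is to compute the $K$-theory class of the edge term $\sum_{i,j\in\lambda}\O_C(-iD-jD')$ chart by chart, using the same localization/normalization trick as in Proposition~\ref{prop: K-theory decomposition of O_Y}. First I would note that the curve $C=C(e)$ is a $B\znums_n$-gerbe over $\P^1$, and each line bundle $\O_C(-iD-jD')$ has degree $-mi-m'j$ and a definite $\znums_n$-character at each of the two stacky points $p_0(e),p_\infty(e)$. The degree of $\O_C(-iD-jD')$ on the rigidified $\P^1$ is essentially $C^\lambda_{\tilde m,\tilde m'}$ summed over boxes, but with fractional corrections concentrated at $p_0$ and $p_\infty$ — this is exactly the source of the $A_\lambda(k,n)$ terms.

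The key steps, in order: (1) For a single box $(i,j)\in\lambda$, compute $[\O_C(-iD-jD')]$ in $K$-theory. Using the convention $\O_C(D)=\O_C(\tilde m p-\delta_0 p_0-\delta_\infty p_\infty)$ and similarly for $D'$ from \S~\ref{subsec: conventions for An-1 toric orbifolds}, write $\O_C(-iD-jD')=\O_C\big((-\tilde m i-\tilde m'j)p+(\delta_0 i+\delta_0'j)p_0+(\delta_\infty i+\delta_\infty' j)p_\infty\big)$. (2) Apply orbifold Riemann-Roch / the filtration by vanishing order at each point: as one twists up by $p_0$ (an orbifold point of order $a=\max(a,a')$), the class changes by a skyscraper at $p(f)$ or $p(f')$ with a specific $\rho_k$; doing this $\lfloor(\delta_0 i+\delta_0'j)/a\rfloor$-ish times — more precisely tracking the integer part of the twist at the root-stack point — produces $\sum_k A_\lambda(k,a)[\O_{p(f)}\otimes\rho_k]$ and its $\delta'_0$-analogue at $p(f')$. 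The floor function $\lfloor (i+k)/n\rfloor$ in the definition of $A_\lambda(k,n)$ is exactly what counts how many integer skyscrapers appear. Similarly at $p_\infty$ one gets the $b$ and $b'$ terms. (3) After removing these boundary corrections one is left with a genuine line bundle pulled back (up to the gerbe character) from $\P^1$; using $[\O_{\P^1}(d)]=[\O_{\P^1}]+d[\O_{pt}]$ and keeping track of the $\znums_n$-character $\rho_{i-j}$, the ``bulk'' contributes $|\lambda|_k[\O_C(-1)\otimes\rho_k]$ for the rank part (one copy of $\O_C(-1)\otimes\rho_k$ per box of color $k$, after shifting the normalization $\O_C\rightsquigarrow\O_C(-1)$) plus $C^\lambda_{\tilde m,\tilde m'}[k,n][\O_{p(e)}\otimes\rho_k]$ for the point part, where the $+1$ in the definition of $C^\lambda_{\tilde m,\tilde m'}$ records the degree shift between $\O_C$ and $\O_C(-1)$. (4) Sum over all boxes $(i,j)\in\lambda$ and collect terms by color, which assembles the six sums in the statement.

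The main obstacle I expect is step~(2): correctly bookkeeping the fractional/floor contributions at the two root-stack points. One must be careful that the twist at $p_0$ is by a point of order $\max(a,a')$ but the resulting skyscrapers live on $B\znums_a$ (via $f$) and $B\znums_{a'}$ (via $f'$) separately, with the representations $\rho_k$ indexed according to the conventions $\O_{p(f)}(-kD)\cong\O_{p(f)}\otimes\rho_k$ fixed in \S~\ref{subsec: generators for K}; matching these indices with the combinatorial definition $A_\lambda(k,n)=\sum_{(i,j)\in\lambda}\lfloor(i+k)/n\rfloor$ requires a small but delicate argument about which character appears at which stage of the twisting filtration, and why only the $\delta_0=1$ (resp.\ $\delta'_0=1$) point contributes. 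I would handle this by a clean induction on the twisting order, reducing to the single statement that $[\O_{C}(p_0)]-[\O_C]=\sum_{k}(\text{indicator that }k\equiv\text{something})[\O_{p(f)}\otimes\rho_k]$ plus a change in the bulk $\P^1$-degree; once that atomic identity is pinned down, the rest is the routine summation in steps~(3)--(4).
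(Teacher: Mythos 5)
Your strategy is sound but it is genuinely different from the paper's. You propose a direct d\'evissage: write $\O _{C} (-iD-jD')$ as $\O _{C}\bigl((-\tilde m i-\tilde m'j)p+(\delta _{0}i+\delta _{0}'j)p_{0}+(\delta _{\infty }i+\delta _{\infty }'j)p_{\infty }\bigr)$ and peel off, one twist at a time, the length-$\frac{1}{a}$ skyscraper quotients at the root-stack points, so that the floor functions in $A_{\lambda } (k,a)$ literally count how many times each character $\rho _{k}$ appears in the filtration. The paper instead first observes that the relevant classes (with the $\rho _{0}$ skyscrapers removed) form a basis with no further relations, and then \emph{extracts each coefficient} by constructing numerical functionals that kill every basis element but one: $\chi $ for $[\O _{p}]$, $\mu _{k} (E)=\chi (E (kD_{0}))-\chi (E)$ and its analogue at $\infty $ for the football case, and, for $n>1$, the functionals $\alpha _{k}$ built from the Toen operator on the components of the inertia stack; these are then evaluated on $\O _{C} (-iD-jD')$ by orbifold Riemann--Roch. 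Your route buys a more geometric proof that never needs the linear-independence statement (you build the decomposition rather than solve for it), at the cost of exactly the character bookkeeping you flag; the paper's route outsources that bookkeeping to Riemann--Roch. The one place your plan is genuinely thin is the gerbe case $n (e)>1$: there $\tilde m=m$ is fractional, there are no orbifold points on the coarse football, and the ``bulk'' line bundle is not an integer twist of $\O _{C}$ by honest points, so step~(3) as written does not apply. You would need to first subtract the class $[\O _{C} (-1)\otimes \rho _{i-j}]$ of matching generic character (or pass to the $n$-fold cover $\tilde C\to C$ used to define that class in \S~\ref{subsec: generators for K}) and argue that the difference is an integer multiple of $[\O _{p (e)}\otimes \rho _{i-j}]$ alone, with multiplicity $-mi-m'j+1$; at that point you are essentially forced to compute characterwise Euler characteristics on the inertia stack, i.e.\ to reinvent the paper's $\alpha _{k}$. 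With that case filled in, the two arguments give the same answer by the routine summation over boxes.
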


Since $\O _{C} (-iD-jD') $ is supported on $C$, it must be a
combination of the classes $[\O _{C} (-1)\otimes \rho _{k}]$
$k=0,\dotsc ,n-1$, $[\O _{p (edge)}\otimes \rho _{k}]$ $k=0\dotsb ,n
(edge)-1$ for $edge\in \{e,f,f',g,g' \}$, and $[\O _{p}]$ since the
remaining generators are always supported away from $C$. The classes
$[\O _{p (edge)}\otimes \rho _{0}]$ can be written in terms of the
other classes using the relation~\eqref{eqn: O_p=O_p(e)otimes
Reg}. There are no further relations and hence the decomposition of
$\O _{C} (-iD-jD')$ into the above classes (without $[\O _{p
(edge)}\otimes \rho _{0}]$) has unique coefficients. We first compute
the coefficients of that decomposition and then restore the classes
with $\rho _{0}$ via the relation~\eqref{eqn: O_p=O_p(e)otimes
Reg}. Let $\mathcal{B}$ be the set of such classes:
\[
\mathcal{B}
 = \left\{[\O _{p}], [\O _{C} (-1)\otimes \rho _{k}]_{k=0,\dotsc ,n-1}, [\O _{p (edge)}\otimes \rho _{k}]_{k=1,\dotsc ,n (edge), edge\in \{e,f,f',g,g' \}} \right\}.
\]

The coefficient of $[\O _{C} (-1)\otimes \rho _{k}]$ in $\sum _{i,j\in
\lambda }\O _{C} (-iD-jD')$ is clearly $|\lambda |_{k}$ since each
summand acts with weight $i-j\mod n$ at the generic point.

To determine the other coefficients, we construct functions on
$K$-theory vanish on all the elements of $\mathcal{B}$ except one. For
example, the holomorphic Euler characteristic $\chi $ which vanishes
on all the classes in $\mathcal{B}$ except $[\O _{p}]$ on which it is
one.

We first suppose that $n>1$. Then we have that $f=f'=g=g'=1$ and the
only point classes are $[\O _{p (e)}\otimes \rho _{k}]$ for $k=1\dotsb
n-1$ and $[\O _{p}]$. We define a function $\alpha _{k}$ on $K$-theory
as follows. Let $C_{l}\subset IC$ be the component of the inertia
stack corresponding to $l\in \znums _{n}$. Let $\omega
=\exp\left(\frac{2\pi i}{n} \right)$ and let $\tau $ be the Toen
operator (see Appendix~\ref{app: GRR and Toen operator.}). We define
\[
\alpha _{k } (E) = \sum _{l=0}^{n-1}\int _{C_{l}} \left(\omega
^{-lk}-1 \right) \tau (E).
\]
\begin{lemma}
The function $\alpha _{k}$ equals 0 on all classes of $\mathcal{B}$
except for $[\O _{p (e)}\otimes \rho _{k}]$ on which it is 1.
\end{lemma}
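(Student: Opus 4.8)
The plan is to compute $\alpha_k$ explicitly on each generator of $\mathcal{B}$ using the definition of the Toen operator and orbifold Grothendieck-Riemann-Roch from the appendix. First I would recall that $\tau(E)$ lives in $\bigoplus_l A_*(C_l)$ (where $C_l$ is the component of the inertia stack $IC$ indexed by $l \in \znums_n$), and that for a sheaf $E$ on $C$, the component of $\tau(E)$ on $C_l$ records the trace of the $\znums_n$-action on $E$ twisted by $\omega^{l}$, up to Todd-class corrections. Since $n > 1$ here, $C \cong B\znums_n$-gerbe over $\P^1$, each $C_l$ is again such a gerbe over $\P^1$ (for $l \neq 0$ it is supported over a point, or rather the whole $\P^1$ — I would check the precise geometry from Appendix~\ref{app: web diagrams}), and the integrals $\int_{C_l}$ pick out a degree-zero piece.

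The key computation is the following case analysis. For $E = [\O_p]$ (the class of a point away from the orbifold locus, pushed forward), the inertia contribution lives only on the untwisted sector $C_0$, so $(\omega^{-l\cdot 0} - 1) = 0$ kills it: $\alpha_k([\O_p]) = 0$. For $E = [\O_C(-1)\otimes\rho_j]$, the class is one-dimensional in support; the integral $\int_{C_l}\tau(E)$ over the one-dimensional $C_l$ picks out the degree-one part of $\tau(E)$, and because $\O_C(-1)$ has the same degree regardless of twisting, the contributions telescope — more precisely $\sum_l (\omega^{-lk}-1)\omega^{lj} = \sum_l \omega^{l(j-k)} - \sum_l \omega^{-lk} = n\delta_{j,k} - n\delta_{k,0}$, and one must check this gets multiplied by the degree of $\O_C(-1)$, which is a fixed rational number; the claim is that after the full GRR normalization this vanishes for all $k$. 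This is the step I expect to be the main obstacle: bookkeeping the exact rational coefficients (involving $1/n$ and the degree $-1$ or $-1/n$) so that the curve-class terms genuinely cancel rather than leaving a residue, and making sure the ``$-1$'' in $(\omega^{-lk}-1)$ is precisely what enforces vanishing on $[\O_p]$ via relation~\eqref{eqn: O_p=O_p(e)otimes Reg} consistently.

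Finally, for the point classes $E = [\O_{p(e)}\otimes\rho_j]$ with $j \in \{1,\dots,n-1\}$: these are supported at a single orbifold point $B\znums_n$, so $\tau(E)$ is concentrated in dimension zero and distributed over all sectors $C_l$ with coefficient $\frac{1}{n}\omega^{lj}$ (the character of $\rho_j$ evaluated at the group element indexing $C_l$). Then
\[
\alpha_k([\O_{p(e)}\otimes\rho_j]) = \frac{1}{n}\sum_{l=0}^{n-1}(\omega^{-lk}-1)\omega^{lj} = \frac{1}{n}\left(\sum_{l=0}^{n-1}\omega^{l(j-k)} - \sum_{l=0}^{n-1}\omega^{lj}\right) = \delta_{j,k} - \delta_{j,0} = \delta_{j,k},
\]
since $j \neq 0$. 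This gives $1$ precisely when $j = k$ and $0$ otherwise, as claimed. I would then remark that the $-1$ summand in $\omega^{-lk}-1$ is exactly the term needed so that $\alpha_k$ also annihilates $[\O_{p(e)}\otimes\rho_0]$ (consistently with having eliminated that class via the regular-representation relation), which is why the function is well-defined on $\mathcal{B}$; strictly, since $\mathcal{B}$ excludes the $\rho_0$ point classes, this is automatic, but it is worth noting for the subsequent inversion of the coefficient matrix. Assembling the three cases yields the lemma.
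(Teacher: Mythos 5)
Your treatment of the point classes $[\O_{p(e)}\otimes\rho_j]$ is correct and coincides with the paper's computation, and your argument that $\alpha_k([\O_p])=0$ (only the untwisted sector contributes, where the factor $\omega^{0}-1$ vanishes) is a valid, slightly more direct route than the paper's, which instead expands $[\O_p]=[\O_{p(e)}\otimes R_{reg}]$ and sums $\delta_{k-j,0}-\delta_{j,0}$ over $j$. However, there is a genuine gap in your handling of the curve classes $[\O_C(-1)\otimes\rho_j]$, and it is not merely a bookkeeping issue as you suggest. The mechanism you propose --- that the degree contribution is a fixed constant times $\omega^{lj}$ on each sector, so that the character sum $\sum_l(\omega^{-lk}-1)\omega^{lj}$ forces cancellation --- cannot work, because that sum equals $n\delta_{j,k}-n\delta_{j,0}$ (note your second delta should carry the index $j$, not $k$), which is \emph{nonzero} precisely when $j=k$. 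If the degree-one part of $\tau(\O_C(-1)\otimes\rho_j)$ on each sector were a nonzero constant times the character, $\alpha_k$ would not vanish on $[\O_C(-1)\otimes\rho_k]$, and the lemma would fail.

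The correct reason for the vanishing is that the integrand is zero on each sector separately, and this comes from the \emph{definition} of the class: $[\O_C(-1)\otimes\rho_j]:=\frac{1}{n}\pi_*[\O_{\tilde C}(-1)\otimes\rho_j]$, where $\pi:\tilde C\to C$ is the degree-$n$ base change making $\tilde C\cong\P^1\times B\znums_n$ the trivial gerbe and $\O(-1)$ is pulled back from $\P^1$. Since $ch(\O_{\P^1}(-1))\cdot td(\P^1)=(1-[\pt])(1+[\pt])$ has vanishing degree-two part, $\tau(\O_{\tilde C}(-1)\otimes\rho_j)$ has no $H^2$ component on any sector of $I\tilde C$; by Toen's functoriality, the same holds after pushing forward to $IC$, so every integral $\int_{C_l}$ in $\alpha_k$ vanishes. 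Your proof as written leaves exactly this step unresolved (you flag it as "the main obstacle"), so you should replace the telescoping heuristic with the functoriality argument above.
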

\begin{proof}
Recall that by definition $[\O _{C} (-1)\otimes \rho
_{l}]=\frac{1}{n}\pi _{*}[\O _{\tilde{C}} (-1)\otimes \rho _{l}]$
where $\pi :\tilde{C}\to C$ is an $n$-fold cover with $\tilde{C}\cong
\P ^{1}\times B\znums _{n}$. By the functorial properties of the Toen
operator (Theorem~\ref{thm: Toen's GRR} in Appendix~\ref{app: GRR and
Toen operator.}), we have
\[
\tau ([\O _{C} (-1)\otimes \rho _{l}]) = \frac{1}{n}\pi _{*}\tau [\O
_{\tilde{C}} (-1)\otimes \rho _{l}].
\]
However, since $\tau (\O _{\tilde{C}} (-1))$ has vanishing $H^{2}$
terms on each component of $I\tilde{C}$, all the integrals in $\alpha
_{k} ([\O _{C} (-1)\otimes \rho _{l}])$ are zero. For the point
classes, we compute (using example~\ref{exmpl: toen operator for BZn} from Appendix~\ref{app: GRR and Toen operator.})
\begin{align*}
\alpha _{k} ([\O _{p (e)}\otimes \rho _{j}]) &= \sum _{l=0}^{n-1} \int _{C_{l}} (\omega ^{-lk}-1)\tau (\O _{p (e)}\otimes \rho _{j})\\
&=\sum _{l=0}^{n-1} (\omega ^{-lk}-1)\int _{C_{l}}\omega ^{lj}[p (e)]\\
&=\frac{1}{n} \sum _{l=0}^{n-1} \omega ^{l (k-j)} - \omega ^{lj}\\
&=\delta _{k-j,0}-\delta _{j,0}.
\end{align*}
Note that 
\[
\alpha _{k} (\O _{p})=\alpha _{k} (\O _{p (e)}\otimes R_{reg}) = \sum
_{j=0}^{n-1} \delta _{k-j,0}-\delta _{j,0} = 0
\]
and the lemma is proved.
\end{proof}
By the above lemma, the coefficient of $\O _{p (e)}\otimes \rho _{k}$
in $\O _{C} (-iD-jD')$ is given by $\alpha _{k} (\O _{C} (-iD-jD'))$.
Using example~\ref{exmpl: toen operator for BZn gerbe over a curve} in
Appendix~\ref{app: GRR and Toen operator.} we can compute as follows
\begin{align*}
\alpha _{k} \left(\O _{C} (-iD-jD') \right) 
&=\sum _{l=0}^{n-1} \int _{C_{l}} \left(\omega ^{-lk}-1 \right) \tau (\O _{C} (-iD-jD'))\\
&=\sum _{l=0}^{n-1} (\omega ^{-lk}-1) \int _{C_{l}}\omega ^{l (i-j)} \left(1+[p (e)] (1-im-jm') \right)\\
&= (1-im-jm') \frac{1}{n} \sum _{l=0}^{n-1} \omega ^{l (i-j-k)}-\omega
^{l (i-j)}\\
&= (1-i\tilde m-j\tilde m') (\delta _{i-j,k}-\delta _{i-j,0}).
\end{align*}
Therefore
\[
\coeff _{[\O _{p (e)}\otimes \rho _{k}]}\left(\sum _{i,j\in \lambda }
\O _{C} (-iD-jD') \right) = C^{\lambda }_{\tilde m,\tilde m'}[k,n] -
C^{\lambda }_{\tilde m,\tilde m'}[0,n].
\]
We also have 
\begin{align*}
\coeff _{[\O _{p}]}\left( \sum _{i,j\in \lambda }
\O _{C} (-iD-jD') \right) &= \chi \left(\sum _{i,j\in \lambda }
\O _{C} (-iD-jD')  \right)\\
&=\sum _{i,j\in \lambda }\sum _{l=0}^{n-1} \int _{C_{l}}\tau (\O _{C} (-iD-jD'))\\
&=\sum _{i,j\in \lambda }\frac{1}{n}\sum _{l=0}^{n-1} \omega ^{l (i-j)} (1-im-jm')\\
&=\sum _{i,j\in \lambda} (1-im-jm')\delta _{i-j,0}\\
&=C^{\lambda }_{\tilde m,\tilde m'}[0,n]
\end{align*}
Using the relation 
\[
[\O _{p}]=\sum _{k=0}^{n-1} [\O _{p (e)}\otimes \rho _{k}],
\]
we find that 
\begin{align*}
\sum _{i,j\in \lambda }[\O _{C} (-iD-jD')] =& 
\quad \sum_{ k=0} ^{n-1} |\lambda |_{k}\cdot [\O _{C} (-1)\otimes \rho
_{k}]\\
&+\sum _{k=0}^{n-1} C^{\lambda }_{\tilde m,\tilde m'}[k,n]\cdot [\O _{p (e)}\otimes \rho _{k}]
\end{align*}
and Proposition~\ref{prop: K-theory decomp of an edge} is proved for
the case of $n>1$.

We now assume that $n=1$. Recall the definitions of $\delta
_{0},\delta _{0}',\delta _{\infty },\delta _{\infty }',\tilde m$, and
$\tilde m'$ from \S~\ref{subsec: the vertex formula}.

The holomorphic Euler characteristic of a general line bundle on the
football $C$ is given in example~\ref{exmpl: GRR for the football} in
Appendix~\ref{app: GRR and Toen operator.}:
\[
\chi \left(\O _{C} (dp+sp_{0}+tp_{\infty }) \right) = d+1
+\left\lfloor \frac{s}{\max (a,a')} \right\rfloor+\left\lfloor
\frac{t}{\max (b,b')} \right\rfloor
\]
Thus
\begin{align*}
\chi (\O _{C} (-iD-jD')) &= \chi (\O _{C} ((-i\tilde{m}-j\tilde{m}')p
+ (i\delta _{0}+j\delta _{0}')p_{0}+ (i\delta _{\infty }+j\delta
_{\infty }')p_{\infty }))\\
&=-i\tilde{m} -j\tilde{m}'+1+\left\lfloor \frac{i\delta _{0}+j\delta _{0}'}{\max (a,a')}  \right\rfloor +\left\lfloor \frac{i\delta _{\infty }+j\delta _{\infty }'}{\max (b,b')}  \right\rfloor\\
&=-i\tilde m-j\tilde m'+1+\left\lfloor \frac{i\delta _{0}}{a}  \right\rfloor 
+\left\lfloor \frac{j\delta' _{0}}{a'}  \right\rfloor 
+\left\lfloor \frac{i\delta _{\infty}}{b} \right\rfloor
+\left\lfloor \frac{j\delta' _{\infty}}{b'} \right\rfloor
\end{align*}
where in the last equality we used the fact that either $\delta _{0}$
or $\delta _{0}'$ is zero and that either $\delta _{\infty }$ or
$\delta _{\infty }'$ is zero. 

We conclude that 
\begin{align*}
\coeff _{[\O _{p}]}\left(\sum _{i,j\in \lambda }\O _{C} (-iD-jD') \right) = &C^{\lambda }_{\tilde m,\tilde m'} +
\delta _{0} A_{\lambda } (0,a) +
\delta' _{0} A_{\lambda' } (0,a') \\
&
\quad \quad +\delta _{\infty } A_{\lambda } (0,b) +
\delta' _{\infty } A_{\lambda' } (0,b') 
\end{align*}
For $k=1,\dotsc ,\max (a,a')-1$ we define
\[
\mu _{k} (E) = \chi (E (kD_{0})) - \chi (E).
\]
For $k=1,\dotsc ,\max (b,b')-1$ we define
\[
\nu _{k} (E) = \chi (E (kD_{\infty })) - \chi (E).
\]
\begin{lemma}
The function $\mu _{k}$ is zero on all the classes in $\mathcal{B}$
except for $\delta _{0}[\O _{p (f)}\otimes \rho _{k}]+\delta _{0}'[\O
_{p (f')}\otimes \rho _{k}]$ on which it is 1. Likewise, the function
$\nu _{k}$ is zero on all the classes in $\mathcal{B}$ except for
$\delta _{\infty }[\O _{p (g)}\otimes \rho _{k}]+\delta _{\infty }'[\O
_{p (g')}\otimes \rho _{k}]$ on which it is 1.
\end{lemma}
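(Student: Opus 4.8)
The plan is to evaluate the functionals $\mu_k$ and $\nu_k$ directly on each generator in $\mathcal{B}$, using the football Riemann--Roch formula (Example~\ref{exmpl: GRR for the football}) and the $B\znums_{n}$ computation (Example~\ref{exmpl: toen operator for BZn}), together with the conventions of \S\ref{subsec: conventions for An-1 toric orbifolds} and \S\ref{subsec: generators for K}. The one geometric input is the position of the divisor $D_0=D_0(e)$ in the graph: it is the region opposite $e$ at the vertex $p_0(e)$, hence is bounded by the edges $f$ and $f'$ and therefore \emph{contains} the curves $C(f)$ and $C(f')$, while it meets $C(e)$ transversely at the single orbifold point $p_0(e)$ and contains neither $C(e)$ nor the generic points of $C(g)$ and $C(g')$. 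It follows that tensoring with $\O_{\X}(kD_0)$ is the identity on every generator of $\mathcal{B}$ supported away from $C(f)\cup C(f')$, so $\mu_k$ kills all of these. The statement for $\nu_k$ then follows verbatim after exchanging $D_0,p_0,(f,f'),(\delta_0,\delta_0'),(a,a')$ with $D_\infty,p_\infty,(g,g'),(\delta_\infty,\delta_\infty'),(b,b')$.

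First I would dispatch the easy generators. For $[\O_p]$, a generic point misses $D_0$, so $\O_p(kD_0)\cong\O_p$ and $\mu_k([\O_p])=0$; the same reasoning gives $\mu_k([\O_{p(g)}\otimes\rho_j])=\mu_k([\O_{p(g')}\otimes\rho_j])=0$, since $p(g)$ and $p(g')$ are generic points of curves not contained in $D_0$. For the class $[\O_C(-1)]$ (the only one of this type, as $n(e)=1$), transversality gives $\O_{\X}(D_0)|_C\cong\O_C(p_0)$ with $p_0$ an orbifold point of order $\max(a,a')$, so that $\O_C(-1)(kD_0)\cong\O_C(-x+kp_0)$ for a generic point $x\in C$; Example~\ref{exmpl: GRR for the football} then gives
\[
\chi\bigl(\O_C(-1)(kD_0)\bigr)=-1+1+\lfloor k/\max(a,a')\rfloor=0
\]
for $1\le k\le\max(a,a')-1$, while $\chi(\O_C(-1))=0$, so $\mu_k([\O_C(-1)])=0$ as well. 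Note that $\mathcal{B}$ contains no generator of the form $[\O_{p(e)}\otimes\rho_j]$ with $j\ge 1$, since $n(e)=1$.

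The remaining, essential case is that of the classes $[\O_{p(f)}\otimes\rho_j]$ and $[\O_{p(f')}\otimes\rho_j]$ with $j\ge1$, which appear in $\mathcal{B}$ precisely when $\delta_0=1$ (respectively $\delta_0'=1$). Because $C(f)\subset D_0$, and in a neighbourhood of $p(f)$ the divisor $D_0=D_0(e)$ coincides with the invariant divisor $D(f)$, the indexing convention $\O_{p(f)}(-D(f))\cong\O_{p(f)}\otimes\rho_1$ of \S\ref{subsec: generators for K} yields $\O_{\X}(kD_0)|_{p(f)}\cong\rho_{-k}$ and hence $(\O_{p(f)}\otimes\rho_j)(kD_0)\cong\O_{p(f)}\otimes\rho_{j-k}$. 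Since $\chi(\O_{p(f)}\otimes\rho_i)=\delta_{i,0}$ by Example~\ref{exmpl: toen operator for BZn} (applied to $p(f)\cong B\znums_{n(f)}$), we obtain, for $j\ge1$,
\[
\mu_k\bigl([\O_{p(f)}\otimes\rho_j]\bigr)=\delta_{j,k}-\delta_{j,0}=\delta_{j,k},
\]
and the identical computation at $p(f')$ gives $\mu_k([\O_{p(f')}\otimes\rho_j])=\delta_{j,k}$. Since at most one of $\delta_0,\delta_0'$ is nonzero (and when both vanish the range of $k$ is empty, so there is nothing to prove), this is exactly the assertion that $\mu_k$ vanishes on all of $\mathcal{B}$ except on $\delta_0[\O_{p(f)}\otimes\rho_k]+\delta_0'[\O_{p(f')}\otimes\rho_k]$, where it equals $1$.

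I expect the only real obstacle to be the weight bookkeeping in the last step: verifying that $D_0(e)$ restricts near $p(f)$ to the divisor $D(f)$ rather than $D'(f)$, and that consequently $\O_{\X}(kD_0)|_{p(f)}$ is the character $\rho_{-k}$ and not $\rho_{+k}$. This is a purely combinatorial matter of unwinding the orientation conventions of Definition~\ref{defn: orientation on a graph} and Figure~\ref{fig: edge e with f,f',g,g'} against the indexing of the $\rho_k$ in \S\ref{subsec: generators for K}; once this sign is pinned down, the proof reduces to the two Riemann--Roch evaluations above.
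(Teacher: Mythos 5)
Your proof is correct and follows essentially the same route as the paper's: evaluate $\mu_k$ generator by generator, using disjointness of support for $[\O_p]$ and the $g,g'$ point classes, the football Riemann--Roch formula for $[\O_C(-1)]$, and the orientation/indexing convention giving weight $-k$ for $\O(kD_0)$ at $p(f)$ and $p(f')$ to get $\delta_{j,k}$ there. The extra care you flag about pinning down that $D_0(e)$ restricts to $D(f)$ near $p(f)$ (so the twist is by $\rho_{-k}$) is exactly the point the paper disposes of with the phrase ``by our orientation conventions.''
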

\begin{proof}
Since $\O _{C} (D_{0})=\O _{C} (p_{0})$, we have 
\begin{align*}
\mu _{k} (\O _{C} (-1)) &= \chi (\O _{C} (-p+kp_{0}))  -\chi (\O _{C} (-p))\\
&=\left\lfloor \frac{k}{\max (a,a')}  \right\rfloor = 0.
\end{align*}
By our orientation conventions, the weight of the action of $\O
(kD_{0})$ on $\O _{p (f)}$ and $\O _{p (f')}$ is $-k$. Then for
$k,l\in \{1,\dotsc ,a-1 \}$ 
\begin{align*}
\mu _{k} (\O _{p (f)}\otimes \rho _{l}) &= \chi (\O _{p (f)}\otimes \rho _{l-k}) - \chi (\O _{p (f)}\otimes \rho _{k})\\
&=\delta _{l,k}
\end{align*}
and similarly for $k,l\in \{1,\dotsc ,a'-1 \}$ we have 
\[
\mu _{k} (\O _{p (f')}\otimes \rho _{l}) = \delta _{l,k}. 
\]
Finally, $\mu _{k}$ vanishes on $[\O _{p}]$, $[\O _{p (g)}\otimes \rho
_{l}]$, and $[\O _{p (g')}\otimes \rho _{l}]$ since these classes can
be taken with support disjoint from $D_{0}$. This proves the
assertions of the lemma for $\mu _{k}$; the proof for $\nu _{k}$ is
similar.
\end{proof}
By the lemma, we can use $\mu _{k}$ and $\nu _{k}$ to determine the
remaining coefficients of $\sum _{i,j\in \lambda }\O _{C} (-iD-jD')$
in the basis $\mathcal{B}$.
\begin{align*}
\mu _{k} (\O _{C} (-iD-jD')) = &\chi (\O _{C} ((-i\tilde {m}-j\tilde {m}')p + (i\delta _{0}+j\delta _{0}'+k)p_{0} + (i\delta _{\infty }+j\delta _{\infty }')p_{\infty }))\\
&+\chi (\O _{C} ((-i\tilde {m}-j\tilde {m}')p + (i\delta _{0}+j\delta _{0}')p_{0} + (i\delta _{\infty }+j\delta _{\infty }')p_{\infty }))\\
=&\left\lfloor \frac{i\delta _{0}+j\delta _{0}'+k}{\max (a,a')}  \right\rfloor -  \left\lfloor \frac{i\delta _{0}+j\delta _{0}'}{\max (a,a')}  \right\rfloor \\
=& \delta _{0}\left( \left\lfloor \frac{i+k}{a}  \right\rfloor-\left\lfloor \frac{i}{a}  \right\rfloor \right) +
\delta' _{0}\left( \left\lfloor \frac{j+k}{a'}  \right\rfloor-\left\lfloor \frac{j}{a'}  \right\rfloor \right)
\end{align*}
where in the last equality we use the fact that at least one of
$\delta _{0},\delta _{0}'$ is zero. Computing similarly, we get that
\[
\nu _{k} (\O _{C} (-iD-jD')) = \delta _{\infty }\left( \left\lfloor
\frac{i+k}{b} \right\rfloor-\left\lfloor \frac{i}{b} \right\rfloor
\right) + \delta' _{\infty }\left( \left\lfloor \frac{j+k}{b'}
\right\rfloor-\left\lfloor \frac{j}{b'} \right\rfloor \right).
\]

Putting together the computations, we obtain
\begin{align*}
\sum _{i,j\in \lambda }\O _{C} (-iD-jD') =&\quad |\lambda |\cdot [\O _{C} (-1)] \\
&+\Bigg(C^{\lambda }_{\tilde m,\tilde m'}+\delta _{0}A_{\lambda } (0,a) +\delta _{0}' A_{\lambda '}(0,a') \\
&\quad \quad\quad \quad\,\,\,+ \delta _{\infty }A_{\lambda } (0,b) +\delta _{\infty}'A_{\lambda '} (0,b')\Bigg) [\O _{p}]\\
&+\sum _{k=1}^{a-1} \left(A_{\lambda\,\, } (k,a\,\,)-A_{\lambda\,\, } (0,a\,\,) \right) \cdot [\O _{p (f\,\,)}\otimes \rho _{k}] \\
&+\sum _{k=1}^{a'-1} \left(A_{\lambda' } (k,a')-A_{\lambda' } (0,a') \right) \cdot [\O _{p (f')}\otimes \rho _{k}] \\
&+\sum _{k=1}^{b-1} \left(A_{\lambda\,\, } (k,b\,\,)-A_{\lambda\,\, } (0,b\,\,) \right) \cdot [\O _{p (g\,\,)}\otimes \rho _{k}] \\
&+\sum _{k=1}^{b'-1} \left(A_{\lambda' } (k,b')-A_{\lambda' } (0,b') \right) \cdot [\O _{p (g')}\otimes \rho _{k}] .
\end{align*}
Note that we can multiply the $f$ (respectively $f'$, $g$, $g'$) sum
by $\delta _{0}$ (respectively $\delta _{0}'$, $\delta _{\infty }$,
$\delta _{\infty }'$) without changing the equality. Thus applying the
relation~\eqref{eqn: O_p=O_p(e)otimes Reg}, we get
\begin{align*}
\sum _{i,j\in \lambda } \O _{C} (-iD-jD')=& \quad |\lambda |\cdot [\O _{C} (-1)]\\
& +C^{\lambda }_{\tilde m,\tilde m'}\cdot [\O _{p (e)}\otimes \rho _{0}]\\
&+\delta _{0}\sum _{k=1}^{a-1} A_{\lambda\,\, } (k,a\,\,)\cdot [\O _{p (f\,\,)}\otimes \rho _{k}]\\
&+\delta _{0}'\sum _{k=1}^{a'-1} A_{\lambda' } (k,a')\cdot [\O _{p (f')}\otimes \rho _{k}]\\
&+\delta _{\infty }\sum _{k=1}^{b-1} A_{\lambda\,\, } (k,b\,\,)\cdot [\O _{p (g\,\,)}\otimes \rho _{k}]\\
&+\delta _{\infty }'\sum _{k=1}^{b'-1} A_{\lambda' } (k,b')\cdot [\O _{p (g')}\otimes \rho _{k}]
\end{align*}
which proves Proposition~\ref{prop: K-theory decomposition of O_Y} in
the case where $n=1$ and hence completes its proof.

\section{The Sign Formula}\label{sec: signs}

\bigskip

\begin{verse}
Sign, sign, everywhere a sign \\
Blocking out the scenery, breaking my mind\\
\end{verse}
\smallskip
\begin{flushright}
{\emph{---Five Man Electrical Band}}
\end{flushright}

\subsection{Overview}

By \cite[Theorem~3.4]{Behrend-Fantechi08} and Lemma~\ref{lem: fixed
points in bijection with partitions}, the invariant $DT_{\alpha } (\X
)$ is given by a signed count of torus invariant ideal sheaves $I$
where the sign is given by $(-1)^{\Ext ^{1}_{0} (I,I)}.$ This section
is devoted to computing those signs and arranging them into vertex and
edge terms. In \S~\ref{subsec: general sign formula} we derive a
general sign formula, theorem~\ref{thm: general sign formula}, and in
\S~\ref{subsec: sign formula in A_n case}, we compute the sign formula
in the case where $\X $ has transverse $A_{n-1}$ orbifold structure.

\subsection{General Sign Formula}\label{subsec: general sign formula}
Let $I\subseteq\O_{\X }$ be the ideal sheaf of $Y$.  The Zariski
tangent space to $Y$ in $\Hilb(\X)$ is isomorphic to
$\Ext^1_0(I,I)$. We want to compute its dimension modulo $2$ in terms
of the associated partitions $\{\lambda (e) \}$ and $\{\pi (v) \}$.
Let $T$ be the 3-dimensional torus acting on $\X $.

For a $T$-representation $V$, we use $V^{\vee}$ to denote the dual
representation.  By equivariant Serre duality, we
have 
\[
\Ext^i(\cF,\cG)^\vee = \Ext^{3-i}(\cG,\cF\otimes\omega_{\X }),
\]
and likewise for traceless Ext.  If $w\in \Hom(T,\cnums^*)$, we use
the notation $\cnums[w]$ to denote a 1-dimensional $T$-representation
with weight $w$.
\begin{lemma}
As a $T$-equivariant line bundle, $\omega_{\X }\cong\O_{\X
}\otimes_{\cnums}\cnums[\mu]$ for some primitive weight $\mu$.
\end{lemma}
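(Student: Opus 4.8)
The plan is to upgrade the non-equivariant Calabi--Yau isomorphism $\omega _{\X }\cong \O _{\X }$ to a $T$-equivariant statement by showing that the holomorphic volume form can be chosen to be a $T$-eigenvector, and then to read off primitivity of its weight from the toric description of $K_{\X }$. By the definition of an orbifold \CYthree there is a nowhere-vanishing section $\Omega \in \Gamma (\X ,\omega _{\X })$; the claim amounts to the assertion that $\Omega $ may be taken $T$-homogeneous, for then the identification $\Omega \mapsto 1$ of $\omega _{\X }$ with $\O _{\X }$ carries the canonical $T$-equivariant structure on $\omega _{\X }$ (canonical bundles being functorial, the $T$-action lifts) to $\O _{\X }\otimes _{\cnums }\cnums [\mu ]$, where $\mu $ is the weight of $\Omega $.

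To see that $\Omega $ is homogeneous I would restrict to the dense open substack $U\subset \X $ on which $T$ acts freely; since $\X $ is toric with generically trivial stabilizers, $U$ is a $T$-torsor with $U\cong T$. On $U$ the bundle $\omega _{U}$ is trivialized by the translation-invariant volume form $\omega _{0}$, so $\Omega |_{U}=g\cdot \omega _{0}$ with $g\in \Gamma (T,\O _{T}^{*})$. The units on the torus are exactly the Laurent monomials scaled by constants, $g=c\, t^{\mu }$ with $c\in \cnums ^{*}$ and $\mu \in \Hom (T,\cnums ^{*})$, so $\Omega |_{U}$ is $T$-homogeneous of some weight $\mu $. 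Decomposing the rational $T$-representation $\Gamma (\X ,\omega _{\X })$ into weight spaces and using that restriction to the dense open $U$ is injective (as $\X $ is integral and $\omega _{\X }$ is a line bundle), it follows that $\Omega $ itself is homogeneous of weight $\mu $ on all of $\X $. Hence $\omega _{\X }\cong \O _{\X }\otimes _{\cnums }\cnums [\mu ]$ as $T$-equivariant line bundles.

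Finally, to see that $\mu $ is primitive I would invoke the toric formula $K_{\X }=\O _{\X }(-\sum _{i}D_{i})$ for the $T$-invariant prime divisors $D_{i}$: the section $\Omega $ exhibits $\sum _{i}D_{i}$ as the divisor of the monomial $t^{-\mu }$, so $\langle \mu ,n_{i}\rangle =-1$ for every primitive ray generator $n_{i}$ of the fan of the coarse space $X$; if $\mu $ were a proper multiple $k\mu '$ with $k\geq 2$ this would force the non-integral pairing $\langle \mu ',n_{i}\rangle =-1/k$, a contradiction. Alternatively one can compute $\mu $ directly in the local chart $[\cnums ^{3}/G_{p}]$ at a torus fixed point $p$: choosing coordinates $x_{1},x_{2},x_{3}$ that are simultaneous eigenvectors for $T$ and $G_{p}$, the hypothesis $G_{p}\subset SL(3,\cnums )$ forces $dx_{1}\wedge dx_{2}\wedge dx_{3}$ to be $G_{p}$-invariant, hence to descend to a $T$-eigen trivialization of $\omega $ on that chart, with weight the sum of the weights of the $x_{i}$, which one checks is primitive in $\Hom (T,\cnums ^{*})$.

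The only point needing care is that $\X $ is non-compact, so $\Gamma (\X ,\O _{\X }^{*})$ need not reduce to $\cnums ^{*}$ and one cannot argue quite so cheaply that $T$-equivariant structures on $\O _{\X }$ are classified by characters; the remedy above is to transfer the homogeneity question to the dense torus orbit $U\cong T$, where the units are completely explicit. I expect this transfer — and, as a secondary matter, the bookkeeping for primitivity — to be the only real content, everything else being formal.
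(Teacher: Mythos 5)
Your argument is correct, and its first half is a careful expansion of exactly what the paper does: the paper simply asserts that the Calabi--Yau condition forces $\omega_{\X }$ to be an equivariant lift of $\O _{\X }$, hence of the form $\O _{\X }\otimes \cnums [\mu ]$, and your passage to the dense orbit $U\cong T$ (where units are $c\,t^{\mu }$, so the volume form is forced to be a $T$-eigenvector) is the right way to make that one-liner rigorous on a non-compact $\X$ --- you correctly identified the only point needing care. Where you genuinely diverge is the primitivity of $\mu $: the paper disposes of it by remarking that a non-primitive $\mu$ would force a non-trivial generic stabilizer, whereas you compute $\langle \mu ,n_{i}\rangle =\pm 1$ against the ray generators from $\operatorname{div}(\Omega )=0$ and $K_{\X }=\O (-\sum _{i}D_{i})$, so that $\mu =k\mu '$ with $k\geq 2$ would violate integrality. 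Your version is more explicit and self-contained (the paper's appeal to the generic stabilizer is left to the reader, and your divisor computation is the concrete content behind it, equivalent to the appendix's normalization $l(v_{i})=1$ of the fan); the only blemishes are a harmless sign ($\langle \mu ,n_{i}\rangle$ comes out $+1$ with the natural conventions, not $-1$) and the tacit assumption that the fan has at least one ray, which holds for every $\X $ the paper considers.
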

\begin{proof}
The \CY condition on $\X $ implies that $\omega _{\X }$ must be
an equivariant lift of $\O _{\X }$ and hence it is of the form $\O
_{\X } \otimes \cnums [\mu ]$. If $\mu $ is not primitive, then
the generic stabilizer of $\X $ is non-trivial.
\end{proof}

\begin{defn}\label{def:shifted_dual}
We define the shifted dual of a $T$-representation $V$ by the
formula 
\[
V^* = V^\vee\otimes\cnums[-\mu].
\]
\end{defn}
Note that the shifted dual induces a fixed-point free involution on
characters of $T$.

\begin{prop}\label{prop:shifted_dual_properties}
The shifted dual satisfies the following properties.
\begin{enumerate}
\item For any $T$-equivariant sheaves $\cF$ and $\cG$, 
\[
\Ext^i(\cF,\cG)^* \cong \Ext^{3-i}(\cG,\cF).
\]
\item Let $V$ and $W$ be virtual $T$-representations such that
\[
V-V^*=W-W^*.
\]
Then the virtual dimensions of $V$ and $W$ are equal modulo 2.
\end{enumerate}
\end{prop}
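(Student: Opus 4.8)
The plan is to prove the two parts separately, with part (1) being a formal consequence of equivariant Serre duality and part (2) a weight-counting argument modulo~2.

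For part (1), I would start from the equivariant Serre duality isomorphism already recorded in the text,
\[
\Ext^i(\cF,\cG)^\vee \cong \Ext^{3-i}(\cG,\cF\otimes\omega_{\X}),
\]
and then use the preceding lemma identifying $\omega_{\X}\cong\O_{\X}\otimes_{\cnums}\cnums[\mu]$ as a $T$-equivariant line bundle. Tensoring by the trivial bundle twisted by $\cnums[\mu]$ commutes with $\Ext$ and simply shifts the $T$-weight, so $\Ext^{3-i}(\cG,\cF\otimes\omega_{\X})\cong\Ext^{3-i}(\cG,\cF)\otimes\cnums[\mu]$. Combining, $\Ext^i(\cF,\cG)^\vee\otimes\cnums[-\mu]\cong\Ext^{3-i}(\cG,\cF)$, which is exactly $\Ext^i(\cF,\cG)^*\cong\Ext^{3-i}(\cG,\cF)$ by Definition~\ref{def:shifted_dual}. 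This is essentially a one-line verification once the lemma is in hand; I would just be careful that the $\Ext$ groups here are finite-dimensional (which holds since $\cF,\cG$ have compactly supported cohomology in the relevant setting), so that dualizing is well behaved.

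For part (2), the key observation is that the shifted dual, as noted in the remark following Definition~\ref{def:shifted_dual}, is a fixed-point-free involution on the set of characters of $T$: since $\mu$ is primitive, no character $w$ satisfies $w=w^*=-w-\mu$ (that would force $2w=-\mu$, impossible for primitive $\mu$ in a lattice, or more precisely $\mu$ primitive rules out $\mu\in 2\Hom(T,\cnums^*)$). Writing a virtual representation $V=\sum_w a_w\,\cnums[w]$ with $a_w\in\znums$, we have $V^*=\sum_w a_w\,\cnums[w^*]=\sum_w a_{w^*}\,\cnums[w]$, so the coefficient of $\cnums[w]$ in $V-V^*$ is $a_w-a_{w^*}$. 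The hypothesis $V-V^*=W-W^*$ thus says $a_w-a_{w^*}=b_w-b_{w^*}$ for all $w$, where $W=\sum_w b_w\,\cnums[w]$. Now partition the characters into orbits $\{w,w^*\}$ under the involution; each orbit has size exactly $2$. For a fixed orbit, $a_w-a_{w^*}\equiv a_w+a_{w^*}\pmod 2$, so the equation $a_w-a_{w^*}=b_w-b_{w^*}$ gives $a_w+a_{w^*}\equiv b_w+b_{w^*}\pmod 2$. Summing over a set of orbit representatives yields $\sum_w a_w\equiv\sum_w b_w\pmod 2$, i.e. $\dim V\equiv\dim W\pmod 2$.

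I expect the only real subtlety is justifying the fixed-point-freeness of the involution, i.e.\ that $\mu$ primitive genuinely excludes $w=w^*$; this is where the Calabi--Yau hypothesis enters through the earlier lemma, and it is worth spelling out that $w=w^*$ would mean $2w=-\mu$, contradicting primitivity of $\mu$. Everything else is bookkeeping: finiteness of the sums (only finitely many $a_w,b_w$ are nonzero, since $V,W$ are honest virtual representations), and the elementary fact that over $\znums/2$ subtraction equals addition. I would present part (2) cleanly by first stating the orbit decomposition of characters and then doing the two-line mod-2 computation on each orbit.
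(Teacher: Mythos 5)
Your proposal is correct and follows essentially the same route as the paper: part (1) is equivariant Serre duality combined with the lemma identifying $\omega_{\X}$ with $\O_{\X}\otimes\cnums[\mu]$, and part (2) is precisely the paper's comparison of the $\nu$ and $-\nu-\mu$ weight spaces as $\nu$ runs over half the characters of $T$, which you have simply written out as the orbit-by-orbit mod-2 computation under the fixed-point-free involution $w\mapsto -w-\mu$. Your explicit justification that primitivity of $\mu$ rules out fixed points is exactly the point the paper leaves implicit.
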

\begin{proof}
The first statement is a restatement of equivariant Serre duality.  The second statement follows by comparing the dimensions of the $\nu$ and $-\nu-\mu$ weight spaces of $V$ and $W$ as $\nu$ runs through half the characters of $T$.
\end{proof}

\begin{defn}
Let $V$ be a virtual $T$-representation.  We define $s(V)\in\znums/2\znums$ to be the dimension modulo $2$ of $V$.  We also define $\sigma(V-V^*)=s(V)$, where the input of $\sigma$ is required to be an anti-self shifted dual virtual representation.  $\sigma$ is well-defined by Proposition \ref{prop:shifted_dual_properties}.
\end{defn}

Considered as $T$-representations, we have that 
\[
\Ext^1_0(I,I) - \Ext^2_0(I,I) = \chi(\O_{\X},\O_{\X})-\chi(I,I).
\]

Using the exact sequence $$0\to I\to \O_{\X}\to \O_Y\to 0,$$ we can
write $$\chi(\O_{\X},\O_{\X})-\chi(I,I) = \chi(\O_{\X},\O_Y) +
\chi(\O_Y,\O_{\X}) - \chi(\O_Y,\O_Y).$$ Since $\chi(\O_{\X},\O_Y)^* =
-\chi(\O_Y,\O_{\X})$, we have $$s(\Ext^1_0(I,I)) =
s(\chi(\O_{\X},\O_Y)) + \sigma(\chi(\O_Y,\O_Y)).$$ The first term is
$\chi(\O_Y)$ modulo 2, so we are left to compute the second term.  For
this we use the $K$-theory decomposition above.

Given any decomposition $\O_Y = \sum_i K_i$ in $K_T({\X})$, we have
\begin{align*}
\chi(\O_Y,\O_Y) &= \sum_{i,j}\chi(K_i,K_j) \\
&= \sum_i [(\Ext^0(K_i,K_i) - \Ext^1(K_i,K_i)) - (\Ext^0(K_i,K_i)-\Ext^1(K_i,K_i))^*] \\
&+ \sum_{i<j} [\chi(K_i,K_j) - \chi(K_i,K_j)^*],
\end{align*}
and therefore $$\sigma(\chi(\O_Y,\O_Y)) = \sum_i s(\Hom(K_i,K_i) - \Ext^1(K_i,K_i)) + \sum_{i<j} s(\chi(K_i,K_j)).$$

We treat the first sum first, and call these the diagonal terms.  It can be divided into edge terms and vertex terms.

\begin{prop}
If $K$ and $L$ are supported on curves, then $$\Ext^1(K,L)\cong H^0(\cExt^1(K,L))\oplus H^1(\cHom(K,L)).$$
\end{prop}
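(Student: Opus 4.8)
The plan is to prove this by a local-to-global spectral sequence argument, combined with the vanishing of the $\cExt^0$ and $\cExt^2$ sheaves. Let $K$ and $L$ be coherent sheaves on $\X$ supported on a one-dimensional substack (a union of the curves $C(e)$). First I would recall the local-to-global Ext spectral sequence
\[
E_2^{p,q} = H^p(\X, \cExt^q(K,L)) \Longrightarrow \Ext^{p+q}(K,L).
\]
Since $K$ is supported in dimension one, the sheaves $\cExt^q(K,L)$ are also supported in dimension one, so $H^p$ of them vanishes for $p \geq 2$; hence $E_2^{p,q} = 0$ for $p \geq 2$, and the spectral sequence degenerates at $E_2$. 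This immediately gives short exact sequences
\[
0 \to H^1(\cExt^{i-1}(K,L)) \to \Ext^i(K,L) \to H^0(\cExt^i(K,L)) \to 0
\]
for each $i$.

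The second step is to show $\cExt^q(K,L) = 0$ for $q \geq 2$. This is a local statement: on each étale chart $[\cnums^2/G] \times C$ transverse to the curve (or, after passing to a cover, on a smooth surface crossed with a curve), $K$ and $L$ restrict to sheaves supported on the factor $C$, hence have homological dimension at most $2$ there; but more is true --- a sheaf supported on a smooth divisor in a smooth surface, or on a point in a smooth curve, has $\cExt$-dimension controlled by the codimension. The cleanest route is: locally $\X$ looks like $S \times \bbA^1$ with $S$ a smooth surface (possibly an orbifold surface), and $K$, $L$ are pushed forward from substacks of dimension $\leq 1$; since $\X$ is smooth of dimension $3$ and the supports are $1$-dimensional, the sheaves $\cExt^q(K,L)$ vanish for $q > 2$ automatically, but we need the sharper vanishing for $q = 2$. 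For that, observe that $\cExt^2(K,L)$ is supported where the support of $K$ has codimension $\geq 2$ behavior, but since $K$ is (generically along its support) locally a module over a smooth curve, it has projective dimension $\leq 2$ and the top $\cExt$ sheaf $\cExt^2(K,L)$ is supported only at isolated points. Then $H^1$ of a skyscraper is zero, so such terms do not contribute to $\Ext^1$: concretely, in the exact sequence for $i=1$, the quotient term is $H^0(\cExt^1(K,L))$ and the sub is $H^1(\cExt^0(K,L)) = H^1(\cHom(K,L))$, while any contribution from $\cExt^2$ would land in $\Ext^2$, not $\Ext^1$.

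Putting these together, the exact sequence for $i = 1$ reads
\[
0 \to H^1(\cHom(K,L)) \to \Ext^1(K,L) \to H^0(\cExt^1(K,L)) \to 0,
\]
and it remains to split it. The splitting follows because $H^1(\cHom(K,L))$ and $H^0(\cExt^1(K,L))$ have disjoint $T$-weight supports, or more simply because in the $T$-equivariant setting every such exact sequence of finite-dimensional representations admits the obvious decomposition --- but for the purely numerical conclusion needed later (computing $s(\Ext^1(K,L))$ mod $2$), only the additivity of dimension in the short exact sequence is required, so a canonical splitting is not strictly needed. I expect the main obstacle to be the careful verification that $\cExt^2(K,L)$ is a sheaf supported in dimension zero (so that its contribution is confined to $\Ext^2$ and does not leak into $\Ext^1$): this requires knowing the local structure of $\X$ transverse to its one-dimensional torus-invariant substacks, which is supplied by the transverse $A_{n-1}$ description in \S\ref{subsec: conventions for An-1 toric orbifolds}, together with the standard fact that a coherent sheaf on a smooth threefold supported in dimension $\leq 1$ has $\cExt$ concentrated in degrees $\leq 2$, with the degree-$2$ part supported in dimension $0$.
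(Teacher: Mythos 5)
Your core argument is correct and is exactly the paper's one-line proof: the local-to-global spectral sequence has $E_2^{p,q}=H^p(\cExt^q(K,L))$ vanishing for $p\geq 2$ because the $\cExt$ sheaves are supported on curves, so it degenerates at $E_2$, and the resulting extension of $H^0(\cExt^1(K,L))$ by $H^1(\cHom(K,L))$ splits automatically as vector spaces. One caveat: the claim in your middle paragraph that $\cExt^2(K,L)$ is supported at isolated points is false in general (e.g.\ $\cExt^2(\O_C,\O_C)\cong \wedge^2 N_{C/\X}$ is a line bundle on all of $C$, as the paper itself computes), but as you yourself note that term only feeds into $\Ext^2$, so the error is harmless and that entire digression can be deleted.
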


\begin{proof}
The local-to-global spectral sequence degenerates at the $E_2$ term.
\end{proof}

First we consider edge terms. Let $e$ be a compact edge and let $C=C
(e)$, $D=D (e)$, and $D'=D' (e)$ so that $C=D\cap D'$. For $A\in
\lambda (e)$ (recall Remark~\ref{rem: elements of partitions as
divisors}) we have

\begin{multline}\label{edge_term_resolution}
0 \to \O_{\X}(-A-D-D')\to\\
 \O_{\X}(-A-D)\oplus\O_{\X}(-A-D')\to \O_{\X}(-A)\to \O_C(-A)\to 0.
\end{multline}
If we apply the functor $\cHom(\cdot, \O_C(-A))$ to this we obtain a
complex which computes the local Ext sheaves.
\begin{enumerate}
\item $\cHom(\O_C(-A),\O_C(-A)) = \O_C$
\item $\cExt^1(\O_C(-A),\O_C(-A)) = N_{C/{\X}}$
\item $\cExt^2(\O_C(-A),\O_C(-A)) = \wedge^2N_{C/{\X}}$
\end{enumerate}
Since $h^0(\O_C)=1$ and $h^{1} (\O _{C})=0$ we deduce that each edge
$e$ contributes
\[
|\lambda (e) |(1+h^0(N_{C/{\X}}))
\]
to the diagonal terms.

We compute the vertex terms as follows. Let $v$ be a vertex and let
$p=p (v)$ and $D_{i}=D_{i} (v)$. For $A\in\pi (v)$, we have the
following exact sequence.

\begin{gather}
\label{vertex_term_resolution}
0 \to \O_{\X}(-A-\sum_i D_i)\to \bigoplus_{1\le i<j\le 3}\O_{\X}(-A-D_i-D_j) \to \\ \bigoplus_{1\le i\le 3}\O_{\X}(-A-D_i)\to \O_{\X}(-A)\to \O_p(-A)\to 0.\nonumber
\end{gather}
By a similar computation to the edge case, we see that every vertex
$v$ contributes
\[
|\pi (v) |(1+h^0(N_{p/{\X}}))
\]
to the diagonal terms.  Note that $|\pi (v) |$ is not the cardinality of $\pi (v)$, but $\sum_{A\in\pi (v)}\xi_{\pi }(A)$.

Finally, we must compute the off-diagonal terms $s(\chi(K_i,K_j))$.
These can be divided into edge terms, where $K_i$ and $K_j$ are
supported on the same edge, and vertex terms, which come in three
types:
\begin{enumerate}
\item $K_i$ and $K_j$ are supported at the same $p=p (v)$.
\item $K_j$ is supported at $p=p (v)$ and $K_i$ is supported along
$C=C (e)$ where $e$ is incident to $v$.
\item $K_i$ is supported on $C=C (e)$ and $K_j$ is supported on $C'=C
(e')$, where $e\neq e'$ have the vertex $v$ in common.
\end{enumerate}

It is convenient to introduce an arbitrary total order on each
partition $\lambda (e)$ and $\pi (v)$.  For each $A<B$ in $\lambda
(e)$, if we apply $\cHom(\cdot, \O_C(-B))$ to
(\ref{edge_term_resolution}), we obtain the complex which computes the
local Ext sheaves:
\[
\cExt  ^{i} (\O _{C} (-A),\O _{C} (-B)) = \O _{C} (A-B)\otimes \wedge
^{i}N_{C/\X }.
\]
It follows that each edge $C\in E$ contributes 
\[
\sum_{\substack{A,B\in\lambda (e) \\ A<B}}
\chi(\O_C(A-B)\otimes\lambda_{-1}(N_{C/{\X}}))
\]
to the off-diagonal terms of $\sigma(\chi(\O_Y,\O_Y))$.

For each $A<B$ in $\pi (v)$, we can apply the same argument to
(\ref{vertex_term_resolution}) to obtain a contribution
of 
\[
\sum_{\substack{A,B\in\pi (v) \\ A<B}} \xi_{\pi (v)}(A)\xi_{\pi
(v)}(B) \chi(\O_p(A-B)\otimes\lambda_{-1}(N_{p/{\X}}))
\]
to the type 1 terms.

If $v$ is incident to $e$, $A\in\lambda (e)$, and $B\in\pi (v)$, then
applying $\cHom(\cdot,\O_p(-B))$ to (\ref{edge_term_resolution})
produces
\[
0 \to \O_p(A-B) \to \O_p(A-B)\otimes N_{C/{\X}} \to \O_p(A-B)\otimes\wedge^2N_{C/{\X}} \to 0,
\]
which yields a type 2 vertex contribution at $v$ of 
\[
\sum_{i=1}^{3} \sum_{A\in\lambda (e_{i})}\sum_{B\in\pi (v)} \xi_{\pi (v)}(B) \chi(\O_p(A-B)\otimes\lambda_{-1}(N_{C (e_{i}) /{\X}})).
\]

Finally, suppose $C=C (e) =D\cap D'$, $C'=C (f') =D\cap D_0$, and $p=p
(v) =C\cap C'$ (see figure~\ref{fig: edge e with f,f',g,g'}). Let
$A\in\lambda (e)$, and $B\in\lambda (f')$.  If we apply
$\cHom(\cdot,\O_{C'}(B))$ to (\ref{edge_term_resolution}), we
obtain the complex
\[
0\to\O_{C'}(A-B) \to
\O_{C'}(A-B+D)\oplus\O_{C'}(A-B+D')\to\O_{C'}(A-B+D+D')\to 0.
\]
Using the fact that $\O_{C'}\to\O_{C'}(D')$ is injective, we compute
the cohomology of the above complex to obtain
\begin{enumerate}
\item $\cHom(\O_C(A),\O_{C'}(B))=0$,
\item $\cExt^1(\O_C(A),\O_{C'}(B))=\O_p(A-B+D')$
\item $\cExt^2(\O_C(A),\O_{C'}(B))=\O_p(A-B+D+D')$
\end{enumerate}
Note that $\O_p(D')=N_{p/C'}$ and $\O_p(D+D')=N_{p/C}^{-1}$ since by
the \CY condition, $\O_p(D+D'+D_{0})=\O_p$. Therefore
\begin{align*}
s (\chi (\O _{C} (A),\O _{C'} (B))) &= h^{0} (\O _{p} (A-B )\otimes
N_{p/C'}) + h^{0} (\O _{p} (A-B )\otimes N^{\vee }_{p/C})\\
&= h^{0} (\O _{p} (A-B )\otimes N_{p/C'}) +h^{0} (\O _{p} (B-A)\otimes
N_{p/C}).
\end{align*}  
Now summing up over all contributions of this type we can write
the type 3 off-diagonal vertex contribution of a vertex $v$ as
\[
\sum_{i\neq j}\sum_{A\in\lambda (e_{i} (v))}\sum_{B\in\lambda (e_{j}
(v))} h^0(\O_p (v) (A-B)\otimes N_{p (v) /C (e_{j} (v))}).
\]
\smallskip

Putting it all together yields the following sign formula.
\begin{align}
\label{eq:sign_formula}
s(\Ext^1_0(I,I)) &= \chi(\O_Y) + \sum_{e\in \Edges } |\lambda (e) |(1+ h^0(N_{C (e) /{\X}})) + \sum_{v\in \Vertices} |\pi (v) |(1+h^0(N_{p (v) /{\X}})) \\
&+ \sum_{e\in \Edges } \sum_{\substack{A,B\in\lambda (e) \\ A<B}} \chi(\O_{C (e)} (A-B)\otimes\lambda_{-1}(N_{C (e) /{\X}})) \nonumber \\
&+ \sum_{v\in \Vertices } \sum_{\substack{A,B\in\pi (v) \\ A<B}} \xi_{\pi (v)}(A)\xi_{\pi (v)}(B) h^0(\O_{p (v)} (A-B)\otimes\lambda_{-1}(N_{p (v) /{\X}})) \nonumber \\
&+ \sum_{v\in \Vertices }\sum _{i=1}^{3} \sum_{A\in\lambda (e_{i} (v))}\sum_{B\in\pi (v)} \xi_{\pi (v)}(B) h^0(\O_{p (v)} (A-B)\otimes\lambda_{-1}(N_{C (e_{i}) /{\X}})) \nonumber \\
&+ \sum _{v\in \Vertices }\sum_{i\neq j}\sum_{A\in\lambda (e_{i} (v))}\sum_{B\in\lambda (e_{j} (v))}
 h^0(\O_{p (v)}(A-B)\otimes N_{p (v) /C (e_{j} (v))}) \nonumber
\end{align}

The above formula can be divided into three pieces. The first is an
overall $\chi (\O _{Y})$, the second is a sum over edges and the third
is a sum over vertices. The contribution of an edge $e$ is 
\[
|\lambda (e)| (1+h^{0} (N_{C (e)/\X })) + \sum _{\begin{smallmatrix} A,B\in \lambda (e)\\
A<B \end{smallmatrix}} \chi (\O _{C (e)} (A-B)\otimes \lambda _{-1}
(N_{C (e)/\X })).
\]
Recall that $<$ was an arbitrary total order. We can resymmetrize as follows. Let $C=C (e)$, $D=D (e)$, and $D'=D' (e)$. We have that 
\begin{align*}
N_{C/\X }&=\O _{C} (D)+\O _{C} (D'),\\
\lambda _{-1}N_{C/\X } &= \O _{C}-\O _{C} (D)-\O _{C} (D')+K_{C}.
\end{align*}
By Serre duality
\begin{align*}
\chi (\O _{C} (A-B)) &= -\chi (\O _{C} (B-A)\otimes K_{C}),\\
\chi (\O _{C} (A-B+D)) &= -\chi (\O _{C} (B-A+D')), \\
h^{0} (\O _{C} (D')) &=h^{1} (\O _{C} (D)).
\end{align*}
This allows the second half of the edge contribution to be rewritten
as a sum over all pairs $(A,B)$, where the diagonal terms are
accounted for by the first half. So the edge contribution is given by
\[
\sum _{A,B\in \lambda (e)}\chi (\O _{C} (A-B)+\O _{C} (A-B+D)).
\]

At each vertex $v$, we can do a similar cancellation with the terms
\[
|\pi (v)| (1+h^{0} (N_{p/\X })) + \sum _{\begin{smallmatrix} A,B\in \pi (v)\\
A<B \end{smallmatrix}}\xi _{\pi (v)} (A)\xi _{\pi (v)} (A) h^{0} (\O
_{p} (A-B)\otimes \lambda _{-1} (N_{p/\X }))
\]
using the fact that 
\[
\lambda _{-1} (N_{p/\X })= \sum _{i=1}^{3} (\O _{p} (-D_{i})-\O _{p}
(D_{i})).
\]
These terms become
\[
|\pi (v)| + \sum _{A,B\in \pi (v)} \xi _{\pi (v)} (A)\xi _{\pi (v)}
(B)h^{0} \left(\sum _{i=1}^{3}\O _{p} (A-B+D_{i}) \right).
\]

The computations of this section are summarized by the following
theorem.
\begin{theorem}\label{thm: general sign formula}
Let $I\subset \O _{\X }$ be a torus fixed ideal corresponding to a
substack $Y\subset \X $ and let $\{\lambda (e),\pi (v) \}$ be the
corresponding sets of partitions. Then $s(\Ext^1_0(I,I))$, the parity
of the dimension of the Zariski tangent space of $I$ in $\Hilb (\X )$,
is given by
\[
s(\Ext^1_0(I,I)) = \chi (\O _{Y}) + \sum _{e\in \Edges }\SEsf _{\lambda (e)}(e)+ \sum _{v\in \Vertices } \SVsf _{\pi (v)}(v)
\]
where
\[
\SEsf_{\lambda } (e) = \sum _{A,B\in \lambda } \chi \left(\O _{C (e)}
(A-B)+\O _{C (e)} (A-B+D (e)) \right)
\]
and
\begin{align*}
\SVsf _{\pi } (v) = |\pi| &+ \sum _{A,B\in \pi } \xi _{\pi } (A)\xi _{\pi } (B) h^{0}\left(\sum _{i=1}^{3} \O _{p (v)} (A-B+D_{i} (e)) \right)\\
&+\sum _{i=1}^{3} \sum _{A\in \lambda (e_{i} (v))}\sum _{B\in \pi}
\xi _{\pi } (B) h^{0} \left(\O _{p (v)} (A-B)\otimes \lambda _{-1}N_{C (e_{i} (v))/\X } \right)\\
&+\sum _{i\neq j}\sum _{A\in \lambda (e_{i} (v))}\sum _{B\in
\lambda (e_{j} (v))}h^{0} (\O _{p (v)} (A-B+D_{j})).
\end{align*}
\end{theorem}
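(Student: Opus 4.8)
The plan is to assemble the local $\cExt$-sheaf computations into a single parity count and then resymmetrize, using equivariant Serre duality throughout both to reduce everything modulo $2$ and to eliminate the dependence on the auxiliary total order.

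First I would recall that by \cite[Theorem~3.4]{Behrend-Fantechi08} the quantity of interest is $s(\Ext^1_0(I,I))$, the dimension mod $2$ of the Zariski tangent space. Writing $\Ext^1_0(I,I)-\Ext^2_0(I,I) = \chi(\O_\X,\O_\X)-\chi(I,I)$ and substituting the ideal sequence $0\to I\to\O_\X\to\O_Y\to 0$ turns this into $\chi(\O_\X,\O_Y)+\chi(\O_Y,\O_\X)-\chi(\O_Y,\O_Y)$. Since $\chi(\O_\X,\O_Y)^*=-\chi(\O_Y,\O_\X)$, Proposition~\ref{prop:shifted_dual_properties} collapses this to $s(\Ext^1_0(I,I)) = \chi(\O_Y) + \sigma(\chi(\O_Y,\O_Y))$, so the whole problem is to evaluate $\sigma(\chi(\O_Y,\O_Y))$ in terms of the partitions $\{\lambda(e),\pi(v)\}$.

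Next I would feed in the $K$-theory decomposition $\O_Y=\sum_i K_i$ of Proposition~\ref{prop: K-theory decomposition of O_Y}, whose summands are the edge classes $\O_{C(e)}(-A)$, $A\in\lambda(e)$, and the weighted vertex classes $\xi_{\pi(v)}(A)\,\O_{p(v)}(-A)$, $A\in\pi(v)$. Expanding $\chi(\O_Y,\O_Y)=\sum_{i,j}\chi(K_i,K_j)$ and pairing $\chi(K_i,K_j)$ with its shifted dual $\chi(K_j,K_i)$ reduces the problem to the diagonal contributions $s(\Hom(K_i,K_i)-\Ext^1(K_i,K_i))$ and the off-diagonal contributions $s(\chi(K_i,K_j))$ with $i<j$. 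For the diagonal pieces I would resolve each $\O_{C(e)}(-A)$, resp. $\O_{p(v)}(-A)$, by the Koszul-type complex~\eqref{edge_term_resolution}, resp.~\eqref{vertex_term_resolution}, apply $\cHom(\cdot,\O_{C(e)}(-A))$, resp. $\cHom(\cdot,\O_{p(v)}(-A))$, to read off the local $\cExt$ sheaves, and then invoke the degeneration of the local-to-global spectral sequence together with $h^0(\O_{C(e)})=1$, $h^1(\O_{C(e)})=0$; this produces the diagonal edge and vertex contributions $|\lambda(e)|(1+h^0(N_{C(e)/\X}))$ and $|\pi(v)|(1+h^0(N_{p(v)/\X}))$. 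For the off-diagonal pieces I would split into same-edge terms and the three vertex types — two classes at one point, a curve class and a point class at an incident vertex, and two curve classes at a common vertex — computing each by applying the appropriate $\cHom$ to~\eqref{edge_term_resolution} or~\eqref{vertex_term_resolution} and using the Calabi--Yau relation to identify the normal-bundle line bundles that appear. Collecting everything gives the raw sign formula~\eqref{eq:sign_formula}.

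The final step, which I expect to be the main obstacle, is the resymmetrization: the raw formula is written with sums over an arbitrary total order ($A<B$), and these must be folded into honest symmetric sums over all pairs $(A,B)$ so that the result manifestly does not depend on that choice. For the edge contribution this uses the Serre-duality identities $\chi(\O_C(A-B))=-\chi(\O_C(B-A)\otimes K_C)$, $\chi(\O_C(A-B+D))=-\chi(\O_C(B-A+D'))$ and $h^0(\O_C(D'))=h^1(\O_C(D))$ on $C=C(e)$, together with $\lambda_{-1}N_{C/\X}=\O_C-\O_C(D)-\O_C(D')+K_C$, to absorb the diagonal $\Hom/\Ext^1$ term and rewrite everything as $\SEsf_{\lambda(e)}(e)=\sum_{A,B\in\lambda(e)}\chi(\O_C(A-B)+\O_C(A-B+D))$. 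At each vertex the analogous manipulation uses $\lambda_{-1}(N_{p/\X})=\sum_{i=1}^{3}(\O_p(-D_i)-\O_p(D_i))$ and the Calabi--Yau relation $\O_p(D_1+D_2+D_3)=\O_p$ to recombine the type~1, type~2 and type~3 off-diagonal terms together with the diagonal vertex term into $\SVsf_{\pi(v)}(v)$; the delicate point is keeping track of the $\xi$-weights cleanly through this folding. Once that bookkeeping is checked, the remaining overall $\chi(\O_Y)$, the edge sum $\sum_e\SEsf_{\lambda(e)}(e)$ and the vertex sum $\sum_v\SVsf_{\pi(v)}(v)$ give exactly the asserted formula.
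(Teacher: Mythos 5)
Your proposal follows the paper's own proof essentially step for step: the reduction via the shifted dual to $\chi(\O_Y)+\sigma(\chi(\O_Y,\O_Y))$, the expansion over the $K$-theory decomposition into diagonal and off-diagonal terms, the Koszul resolutions \eqref{edge_term_resolution} and \eqref{vertex_term_resolution} to compute the local $\cExt$ sheaves, and the final Serre-duality resymmetrization that removes the arbitrary total order. The approach is correct and matches the paper's argument.
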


\begin{exmpl}
If ${\X}=X$ is a scheme, then $\SEsf_{\lambda } (e)$ simplifies to $m
(e)|\lambda |$ and $\SVsf_{\pi } (v)$ simplifies to 0 and we recover
the signs of the classical topological vertex. The simplifications are
straightforward:
\begin{align*}
\SEsf _{\lambda } (e)& = \sum _{A,B\in \lambda } \deg (A-B) +1 +\deg (A-B+D (e))+1\\
&= \sum _{A,B\in \lambda } \deg (D (e))\\
&= m (e)|\lambda |^{2} = m (e)|\lambda | \mod 2.
\end{align*}
As for the vertex term, note that $\lambda _{-1}N_{C (e_{i} (v))/X}$
restricted to $p (v)$ is zero, so
\begin{align*}
\SVsf_{\pi }  (v)&= |\pi | + \sum _{A,B\in \pi } 3\,\,\xi _{\pi } (A)\xi _{\pi } (B) \\
&\quad \quad \quad + \sum _{i\neq j} \sum _{A\in \lambda (e_{i} (v))}\sum _{D\in \lambda (e_{j} (v))} 1\\
&= |\pi |+3|\pi |^{2} +2\sum _{i<j} |\lambda (e_{i} (v))|\cdot  |\lambda (e_{j} (v))|\\
&= 0 \mod 2.
\end{align*}
\end{exmpl}

\begin{exmpl}
If $\X =[\cnums ^{3}/G]$ then there is a single vertex and each torus
invariant ideal $I$ corresponds to a single (leg-less) partition $\pi
$. Let $r_{1},r_{2},r_{3}\in \hat{G}$ be the characters of $G$ given
by $\O _{p} (D_{i})$ and let $0\in \hat{G}$ be the trivial
character. Let $|\pi |_{r}$ be the number of boxes in $\pi $ colored
by the character $r$. Then the sign associated to $I$ simplifies as
follows.
\begin{align*}
s (\Ext _{0}^{1} (I,I)) &= \chi (\O _{Y}) + \SVsf _{\pi }\\
&= |\pi |_{0} + |\pi | + \sum _{A,B\in \pi }\sum _{i=1}^{3} h^{0} (\O
_{p} (A-B+D_{i}))\\
&=|\pi |_{0} + |\pi | +\sum _{r\in \hat{G}} |\pi |_{r} \left(|\pi |_{r+r_{1}}+|\pi |_{r+r_{2}}+|\pi |_{r+r_{3}} \right)
\end{align*}
\end{exmpl}

\begin{remark}\label{rem: general vertex formula}
A general orbifold vertex formula can now be obtained. Using our
identification of the torus fixed points (Lemma~\ref{lem: fixed points
in bijection with partitions}), our $K$-Theory decomposition of torus
fixed ideas (Proposition~\ref{prop: K-theory decomposition of O_Y}),
our general sign formula (Theorem~\ref{thm: general sign formula}),
and the Behrend-Fantechi theorem, we get a combinatorial formula for
$DT (\X )$ of the form given by equation~\eqref{eqn: general vertex
formula}. The details of the formula, particularly the edge term,
depend on the choice of generators for $F_{1}K (\X )$.
\end{remark}

\subsection{Sign formula in the transverse $A_{n-1}$ case}\label{subsec: sign formula in A_n case}

In this section we simplify the sign formula from Theorem~\ref{thm:
general sign formula} in the case where $\X $ has transverse $A_{n-1}$
orbifold structure.

We first simplify the edge term $\SEsf _{\lambda } (e)$. First suppose
that $n=n (e)>1$ so that $C=C (e)$ is a $B\znums _{n}$ gerbe. Let $D=D
(e)$. Then
\begin{align*}
\SEsf _{\lambda } (e) &= \sum _{k=0}^{n-1} \sum _{\begin{smallmatrix} A,B\in \lambda \\
A\in \lambda [k,n] \end{smallmatrix}} \chi \left(\O _{C} (A-B)
\right)+\chi \left(\O _{C} (A-B+D) \right)\\
&=\sum _{k=0}^{n-1}\left(\sum _{A,B\in \lambda [k,n]}\deg (A)-\deg (B)+1 \right)\\
&\quad  +\left(\sum _{\begin{smallmatrix} A\in \lambda [k,n]\\
B\in \lambda [k-1,n] \end{smallmatrix}} (\deg (A)+1)- (\deg (B)+1)+\deg (D)+1 \right)\\
&=\sum _{k=0}^{n-1} |\lambda |_{k}^{2} +|\lambda |_{k-1}C^{\lambda }_{m,m'}[k,n] - |\lambda |_{k}C^{\lambda }_{m,m'}[k-1,n]+ (1+m)|\lambda |_{k}|\lambda |_{k-1}\\
&=\sum _{k=0}^{n-1} |\lambda |^{2}_{k}+C^{\lambda
}_{m,m'}[k,n]\left(|\lambda |_{k-1}-|\lambda |_{k+1} \right) +
(1+m)|\lambda |_{k}|\lambda |_{k-1}.
\end{align*}
Now suppose that $n=1$ so that $C$ is a football. Extracting the edge
terms from equation~\eqref{eq:sign_formula}, we get
\[
\SEsf _{\lambda } (e) = |\lambda |\left(1+h^{0} (N_{C/\X }) \right)
+\sum _{\begin{smallmatrix} A,B\in \lambda \\
A<B \end{smallmatrix}} \chi \left(\O _{C} (A-B)\otimes \lambda
_{-1}N_{C/\X } \right).
\]
Since $C$ is a football, and $\lambda _{-1}N_{C/\X } $ has rank and
degree zero, it is trivial in $K$-theory and so the term in the sum is
zero. Thus we compute (mod 2):
\begin{align*}
\SEsf _{\lambda } (e) &= |\lambda | \left(1+h^{0} (\O _{C} (D)\oplus \O _{C} (D')) \right)\\
&= |\lambda | \left(1+h^{0} (\O _{C} (D))+h^{1} (\O _{C} (-D'+K)) \right)\\
&= |\lambda |\left(1+\chi (\O _{C} (D)) \right)\\
&= |\lambda |\left(1+\tilde {m}+1-\delta _{0}-\delta _{0}' \right)\\
&= |\lambda |\left(\tilde m +\delta _{0}+\delta _{0}' \right).
\end{align*}

The vertex term simplifies as follows. Writing $\lambda _{i}=\lambda
(e_{i} (v))$ and using the facts that $\lambda _{-1}N_{p (v)/\X }=0$
and $\lambda _{-1}N_{C (e_{i})/\X }=0$ if $i=1$ or 2, the vertex terms
from equation~\eqref{eq:sign_formula} simplify to become
\begin{align*}
\SVsf _{\pi }&=|\pi | +\sum _{A\in \lambda _{3}}\sum _{B\in \pi }\xi _{\pi } (B) h^{0}\left(\O _{p} (A-B+D_{1})+\O _{p} (A-B+D_{2}) \right)\\
&\quad \quad + \sum _{i\neq j}\sum _{A\in \lambda _{i}}\sum _{B\in \lambda _{j}}
h^{0}\left(\O _{p} (A-B+D_{j}) \right)\\
&= \quad \sum _{k=0}^{n-1} |\pi |_{k}\left(|\lambda _{3}|_{k-1}+ |\lambda _{3}|_{k+1} \right)\\
&\quad +\sum _{k=0}^{n-1}|\lambda _{3}|_{k}\left(|\lambda _{1}|_{k}+|\lambda _{2}|_{k}+|\lambda _{1}|_{k-1}+|\lambda _{2}|_{k+1} \right).
\end{align*}

The above computations yield the following theorem.

\begin{theorem}\label{thm: sign formula for An case}
Let $\X $ be a orbifold toric \CYthree with transverse
$A_{n-1}$ orbifold structure. Then the sign formula in
theorem~\ref{thm: general sign formula} simplifies as follows
\[
s (\Ext ^{1}_{0} (I,I)) =\chi (\O _{Y}) + \sum _{e\in \Edges } \SEsf
_{\lambda (e)} (e) +\sum _{v\in \Vertices } \SVsf _{\pi (v)} (v)
\]
where
\[
\SEsf _{\lambda } (e) = 
\sum _{k=0}^{n-1}C^{\lambda }_{m,m'}[k,n]\left(|\lambda |_{k-1} -|\lambda |_{k+1} \right) +|\lambda |_{k } \left(1+ (1+m)|\lambda |_{k-1} \right)
\]
if $n=n (e)>1$,
\[
\SEsf _{\lambda } (e) = |\lambda |\left(\tilde {m}+\delta _{0}+\delta _{\infty } \right) 
\]
if $n=1$, and
\begin{align*}
\SVsf _{\pi } &=\quad  \sum _{k=0}^{n-1}|\pi |_{k} \left(|\lambda _{3}|_{k-1}+|\lambda _{3}|_{k+1} \right)\\
&\quad +\sum _{k=0}^{n-1}|\lambda _{3}|_{k}\left(|\lambda _{1}|_{k}+|\lambda _{2}|_{k}+|\lambda _{1}|_{k-1}+|\lambda _{2}|_{k+1} \right).
\end{align*}
\end{theorem}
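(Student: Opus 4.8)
The plan is to derive Theorem~\ref{thm: sign formula for An case} as a specialization of the general sign formula, Theorem~\ref{thm: general sign formula}: the overall $\chi(\O_Y)$ term and the splitting $s(\Ext^1_0(I,I))=\chi(\O_Y)+\sum_e\SEsf_{\lambda(e)}(e)+\sum_v\SVsf_{\pi(v)}(v)$ carry over verbatim, so the entire task is to rewrite the general edge contribution $\SEsf_\lambda(e)$ and the general vertex contribution $\SVsf_\pi(v)$ from \eqref{eq:sign_formula} in the transverse $A_{n-1}$ geometry. The two geometric inputs I would use throughout are orbifold Riemann--Roch / the Toën operator computations of Appendix~\ref{app: GRR and Toen operator.}, and the fact that the local $\znums_n$-action at a stacky point has weights $(1,-1,0)$.

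For the edge term I would split into the cases $n=n(e)>1$ and $n=1$. When $n>1$, $C=C(e)$ is a $B\znums_n$-gerbe over $\P^1$, and the GRR formula for such a gerbe (Example~\ref{exmpl: toen operator for BZn gerbe over a curve}) expresses $\chi(\O_C(A-B))$ and $\chi(\O_C(A-B+D))$ as explicit linear functions of the degrees that vanish unless the relevant colors agree; organizing $\sum_{A,B\in\lambda}$ by the color classes $\lambda[k,n]$ then produces $\sum_k\big(|\lambda|_k^2+C^\lambda_{m,m'}[k,n](|\lambda|_{k-1}-|\lambda|_{k+1})+(1+m)|\lambda|_k|\lambda|_{k-1}\big)$, and reducing mod $2$ via $|\lambda|_k^2\equiv|\lambda|_k$ gives the stated gerbe formula. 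When $n=1$, $C$ is a football and $\lambda_{-1}N_{C/\X}$ has vanishing rank and degree, hence is $0$ in $K$-theory, so the off-diagonal edge sum in \eqref{eq:sign_formula} drops; what remains is $|\lambda|(1+h^0(N_{C/\X}))$, and Serre duality on the football together with $\O_C(D+D')=K_C$ converts $1+h^0(N_{C/\X})$ into $1+\chi(\O_C(D))\bmod 2$. Feeding in the football Riemann--Roch formula of Example~\ref{exmpl: GRR for the football}, which gives $\chi(\O_C(D))=\tilde m+1-\delta_0-\delta_\infty$, yields $\SEsf_\lambda(e)=|\lambda|(\tilde m+\delta_0+\delta_\infty)$.

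For the vertex term the key observation is that the weights $(1,-1,0)$ force $N_{p(v)/\X}$ to contain a trivial one-dimensional summand (the tangent direction along the special edge), so $\lambda_{-1}N_{p(v)/\X}=0$, and similarly $\lambda_{-1}N_{C(e_i)/\X}$ restricts to $0$ at $p(v)$ for the two non-special edges $i=1,2$ since those normal bundles again pick up that trivial direction. These vanishings collapse the general vertex term to $|\pi|$ plus the $i=3$ piece and the $i\neq j$ cross-term piece; expanding the surviving factors $h^0(\O_{p(v)}(A-B+D_j))$ by color, where adding $D_j$ shifts the color in the manner prescribed by the weights, and collecting terms gives $\sum_k|\pi|_k(|\lambda_3|_{k-1}+|\lambda_3|_{k+1})+\sum_k|\lambda_3|_k(|\lambda_1|_k+|\lambda_2|_k+|\lambda_1|_{k-1}+|\lambda_2|_{k+1})$. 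I expect the main obstacle to be purely bookkeeping rather than conceptual: keeping the color-shift and orientation conventions of \S\ref{subsec: conventions for An-1 toric orbifolds} and \S\ref{subsec: the vertex formula} consistent (which incident edge is $e_3$, which $h^0$ is nonzero for which color difference), and checking that the quadratic-in-$|\lambda|_k$ expressions reduce correctly modulo $2$; all the geometric ingredients needed are already in place.
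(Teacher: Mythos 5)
Your proposal follows the paper's own proof essentially verbatim: the same specialization of Theorem~\ref{thm: general sign formula}, the same gerbe and football Riemann--Roch computations (Examples~\ref{exmpl: toen operator for BZn gerbe over a curve} and \ref{exmpl: GRR for the football}) for the two edge cases, and the same vanishing of the relevant $\lambda_{-1}$-classes forced by the $(1,-1,0)$ weights for the vertex term. The one loose end --- which the paper's own intermediate display also glosses over --- is the disappearance of the leading $|\pi|$ from the vertex contribution; it cancels because the remaining diagonal piece satisfies $\sum_{A,B\in\pi}\xi_{\pi}(A)\xi_{\pi}(B)\,h^{0}\bigl(\sum_{i}\O_{p}(A-B+D_{i})\bigr)\equiv\sum_{k}|\pi|_{k}^{2}\equiv|\pi|\pmod 2$, which is exactly the kind of quadratic mod-2 reduction you already flag as remaining bookkeeping.
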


Theorem~\ref{thm: main formula for transverse An}, our vertex formula
for $DT (\X )$ in the transverse $A_{n-1}$ case is now easily
proved. By Lemma~\ref{lem: fixed points in bijection with partitions}
and \cite[Theorem~3.4]{Behrend-Fantechi08}, the partition function is
given by a signed sum over edge assignments and compatible 3D
partitions at the vertices. Using Proposition~\ref{prop: K-theory
decomposition of O_Y}, Lemma~\ref{lem: vertex K-theory decomp}, and
Proposition~\ref{prop: K-theory decomp of an edge}, the variable
associated to each term in the sum is assigned. Finally, the sign of
each term is determined by Theorem~\ref{thm: sign formula for An
case}: the $\chi (\O _{Y})$ term is accounted for by adding a sign to
the $q$ variable and all the $q_{e,0}$ variables. The $\SEsf _{\lambda
} (e)$ term is accounted for by the $\Ssf _{\lambda } (e)$ term in the
formula, the first term in $\SVsf _{\pi } (v)$ is accounted for by
changing the signs on the vertex variables as in equation~\ref{eqn:
signs of vertex vars}, and the second term in $\SVsf _{\pi } (v)$ is
accounted for by the sign $(-1)^{\Sigma _{\pi (v)}}$.

\section{Proof of Theorem~\ref{thm: formula for Z_n
vertex}}\label{sec: proof of Zn vertex formula}

The proof of Theorem~\ref{thm: formula for Z_n vertex} involves some
intricate combinatorics, and thus we have broken it into several
subsections.

\subsection{Review of vertex operators}\label{subsec: review of vert ops}
\[
\quad 
\]
Let $\lambda \subset \znums _{\geq 0}^{2}$ be a partition (considered
as a Young diagram).  The \emph{rows} or \emph{parts} of $\lambda $
are the integers $\lambda _j = \min \{i \;|\;(i,j) \not \in \lambda
\}$, for $j \geq 0$.  Let $\lambda $ and $\mu $ be two partitions.  We
write $\lambda \succ \mu$ if
\[
\lambda_0 \geq \mu_0 \geq \lambda_1 \geq \mu_1 \geq \cdots
\]
and note that $\lambda \succ \mu$ if and only if as diagrams $\mu
\subset \lambda $ and $\lambda$ and $\mu$ are two adjacent diagonal
slices in some 3D partition (see for example \cite[\S3]{Ok-Re-Va}).

Fix $n$, and let $q_0, \ldots ,q_{n-1}$ be indeterminates.  Let
$\mathcal{R}$ be the ring of formal Laurent series in $q_0, \ldots,
q_{n-1}$.  Let $\mathcal{P}$ be the set of all Young diagrams, and let
$\mathcal{RP}$ be the free $\mathcal{R}$ module generated by elements
of $\mathcal{P}$.

We define two types of operators on $\mathcal{RP}$ in terms of their
action upon an element of $\mathcal{P}$.
\begin{defn}Let $x$ be a monomial in $q_0, \ldots, q_{n-1}$. Then 
\label{defn:Gamma_and_Q_operators}
\begin{align*}
\Gamma_{+}(x) \lambda &\stackrel{\text{def}}{=} \sum_{\mu \prec \lambda} x^{|\lambda| - |\mu|}\mu \\
\Gamma_{-}(x) \lambda &\stackrel{\text{def}}{=} \sum_{\mu \succ \lambda} x^{|\mu| - |\lambda|}\mu \\
Q_i \lambda &= q_i^{|\lambda|} \lambda \quad (0 \leq i \leq n-1)
\end{align*}
We will sometimes use the following shorthand notation:
\begin{align*}
\Gamma_{+1}(x) &= \Gamma_+(x) &
\Gamma_{-1}(x) &= \Gamma_-(x) &
Q &= Q_0Q_1\cdots Q_{n-1}
\end{align*}
\end{defn}

\begin{lemma} 
\label{lemma:Gamma_schur_property}
Let $\{x_i \;|\; i \in \znums_{\geq 0}\}$ be monomials in $q_0, \ldots, q_{n-1}$.  Then
\begin{align*}
\left< \mu  \left| \prod_{i \in \znums_{\geq 0}}\Gamma_{-}(x_i) \right| \lambda ' \right> &= s_{\mu /\lambda '}(x_0, x_1, x_2, \ldots), \\
\left< \mu  \left| \prod_{i \in \znums_{\geq 0}}\Gamma_{+}(x_i) \right| \lambda ' \right> &= s_{\lambda '/\mu }(x_0, x_1, x_2, \ldots). \\
\end{align*}
\end{lemma}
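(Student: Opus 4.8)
The plan is to expand each matrix element by inserting resolutions of the identity on $\mathcal{RP}$ and to recognise the resulting sum over chains of partitions as the standard chain-of-partitions description of a skew Schur function. This is essentially the bosonic vertex operator computation of \cite[\S3]{Ok-Re-Va}, carried out with our $n$-coloured monomial weights.

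First I would record the one-step matrix elements. Since the Young diagrams form an orthonormal basis of $\mathcal{RP}$, Definition~\ref{defn:Gamma_and_Q_operators} gives immediately
\[
\langle\rho|\Gamma_-(x)|\nu\rangle = \begin{cases} x^{|\rho|-|\nu|} & \text{if } \nu\prec\rho,\\ 0 & \text{otherwise,}\end{cases}
\qquad
\langle\rho|\Gamma_+(x)|\nu\rangle = \begin{cases} x^{|\nu|-|\rho|} & \text{if } \rho\prec\nu,\\ 0 & \text{otherwise.}\end{cases}
\]
The elementary fact linking this to Schur functions is that $\nu\prec\rho$ holds exactly when the skew diagram $\rho/\nu$ is a horizontal strip: this is a direct comparison of $\rho_0\geq\nu_0\geq\rho_1\geq\nu_1\geq\cdots$ with the inequalities $\rho_j\geq\nu_j\geq\rho_{j+1}$ defining a horizontal strip, and then $\rho/\nu$ has $|\rho|-|\nu|$ boxes (see \cite[\S I.1]{MacDonald}).

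Next, inserting $\sum_\kappa|\kappa\rangle\langle\kappa|$ between consecutive factors, I would write the first matrix element as
\[
\left\langle\mu\left|\prod_{i\in\znums_{\geq 0}}\Gamma_-(x_i)\right|\lambda'\right\rangle = \sum \prod_{i\geq 0} x_i^{\,|\kappa^{(i+1)}|-|\kappa^{(i)}|},
\]
the sum ranging over all sequences of partitions $\lambda' = \kappa^{(0)}\subseteq\kappa^{(1)}\subseteq\kappa^{(2)}\subseteq\cdots$ with every $\kappa^{(i+1)}/\kappa^{(i)}$ a horizontal strip and $\kappa^{(i)} = \mu$ for all $i\gg 0$ (the monotonicity of $\subseteq$ and finiteness of $\mu,\lambda'$ force all but finitely many steps to be trivial, and only chains terminating at $\mu$ contribute to the $\langle\mu|$ component). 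By the chain-of-partitions description of skew Schur functions — equivalently, by the bijection between such chains and semistandard Young tableaux of shape $\mu/\lambda'$ — the right-hand side is precisely $s_{\mu/\lambda'}(x_0,x_1,x_2,\ldots)$, cf.\ \cite[\S I.5]{MacDonald}. The $\Gamma_+$ statement follows by the same argument with the roles of $\mu$ and $\lambda'$ interchanged: now $\langle\rho|\Gamma_+(x)|\nu\rangle$ is supported on $\rho\prec\nu$, so the expansion of $\langle\mu|\prod_i\Gamma_+(x_i)|\lambda'\rangle$ runs over chains $\mu = \kappa^{(0)}\subseteq\kappa^{(1)}\subseteq\cdots$ with horizontal-strip steps and $\kappa^{(i)} = \lambda'$ for $i\gg 0$, yielding $s_{\lambda'/\mu}(x_0,x_1,x_2,\ldots)$.

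The one place where care is needed is the infinite product: for fixed finite $\mu$ and $\lambda'$ one must check that every contributing chain stabilises after finitely many steps, so that the displayed sums are well-defined elements of $\mathcal{R}$ that agree with the skew Schur functions in the countable alphabet $\{x_i\}$. This is the same convergence bookkeeping as in \cite[\S3]{Ok-Re-Va} and causes no trouble. Finally, since skew Schur functions are symmetric in their arguments, these matrix elements are independent of the order chosen for the (mutually commuting) operators $\Gamma_\pm(x_i)$, so no ordering convention for the product needs to be fixed in the statement.
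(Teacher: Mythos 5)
Your proof is correct and takes essentially the same route as the paper's (very terse) proof: the paper reduces to a single variable "by elementary properties of Schur functions" and invokes the semistandard-tableau definition, which is precisely the chain-of-horizontal-strips expansion you carry out explicitly via resolutions of the identity. The identification of the interlacing condition $\prec$ with horizontal strips, the stabilisation of contributing chains, and the remark that commutativity plus symmetry of skew Schur functions makes the ordering irrelevant are all correct.
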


\begin{proof}
By elementary properties of Schur functions, this reduces immediately
to the case where $x_i = 0$ for $i > 1$ --- which in turn follows from
the semistandard Young tableau definition of the Schur
function\cite[Definition 7.10.1]{ECII}.
\end{proof}

\begin{corollary}
\label{cor:Gamma_bischur_property}
Let $\{x_i\}, \{y_i\}$ be monomials in $q_0, \ldots, q_{n-1}$.  Then
\begin{align*}
\left< \mu \left| \prod_{i \in \znums_{\geq 0}}\Gamma_{-}(x_i)
\prod_{i \in \znums_{\geq 0}}\Gamma_{+}(y_i) \right| \lambda ' \right>
&= \sum_{\eta }
s_{\mu /\eta}(\{x_i\} )s_{\lambda '/\eta}(\{y_i\}). \\
\end{align*}
\end{corollary}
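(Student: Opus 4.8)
The plan is to deduce the corollary from Lemma~\ref{lemma:Gamma_schur_property} by inserting a resolution of the identity between the two products of vertex operators. Recall that the Young diagrams form an orthonormal basis of $\mathcal{RP}$ with respect to the bracket pairing, so the identity operator may be written as $\sum_{\eta\in\mathcal{P}}|\eta\rangle\langle\eta|$. Inserting this between $\prod_{i}\Gamma_{-}(x_i)$ and $\prod_{i}\Gamma_{+}(y_i)$ would give
\[
\left\langle \mu \left| \prod_{i}\Gamma_{-}(x_i)\prod_{i}\Gamma_{+}(y_i)\right|\lambda'\right\rangle
= \sum_{\eta\in\mathcal{P}} \left\langle\mu\left|\prod_{i}\Gamma_{-}(x_i)\right|\eta\right\rangle\left\langle\eta\left|\prod_{i}\Gamma_{+}(y_i)\right|\lambda'\right\rangle .
\]
Then I would apply Lemma~\ref{lemma:Gamma_schur_property} to each factor: the first equals $s_{\mu/\eta}(\{x_i\})$ and the second equals $s_{\lambda'/\eta}(\{y_i\})$ (using the second identity of the lemma with $\mu$ replaced by $\eta$ and $\lambda'$ by $\lambda'$), which is exactly the asserted formula.

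Two routine points should be addressed for rigor. First, the insertion of the identity is legitimate without any convergence concerns: for fixed $\mu$ the coefficient $s_{\mu/\eta}$ vanishes unless $\eta\subseteq\mu$, so the sum over $\eta\in\mathcal{P}$ is in fact a finite sum over the sub-diagrams of $\mu$. Second, I would note that the matrix elements in question are well defined directly from Definition~\ref{defn:Gamma_and_Q_operators}: $\Gamma_{+}(y)\lambda$ is a finite sum, while $\Gamma_{-}(x)\lambda$ is a formal sum whose pairing against any fixed diagram isolates a single term, so all the expressions above are elements of $\mathcal{R}$.

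I do not expect any genuine obstacle here; the statement is a direct corollary of Lemma~\ref{lemma:Gamma_schur_property} together with orthonormality of the diagram basis, and the only thing to be careful about is the bookkeeping of which partition plays the role of the second argument in each application of the lemma.
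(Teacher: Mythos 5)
Your proof is correct and is essentially identical to the paper's: the paper also inserts $\sum_\eta \delta_\eta$ (the sum of projections onto the basis diagrams, i.e.\ your resolution of the identity) between the two products and then applies Lemma~\ref{lemma:Gamma_schur_property} to each factor. Your added remarks on finiteness of the $\eta$-sum and well-definedness of the matrix elements are fine but not needed beyond what the paper records.
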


\begin{proof}
If $\eta$ is a partition, then let $\delta_{\eta}$ be the projection
operator onto the space spanned by $\left|\eta\right>$.  Then
\begin{align*}
&\left< \mu  \left| \prod_{i\in \znums _{\geq 0}}\Gamma_{-}(x_i)  \prod_{i\in \znums _{\geq 0}}\Gamma_{+}(y_i) \right| \lambda ' \right>  \\
&= 
\sum_{\eta} \left< \mu  \left| \left(\prod_{i \in \znums_{\geq 0}}\Gamma_{-}(x_i) \right) \delta_{\eta} \left(\prod_{i \in \znums_{\geq 0}}\Gamma_{+}(y_i)\right) \right| \lambda ' \right> \\
&= \sum_{\eta} 
s_{\mu /\eta}(\{x_i\} )s_{\lambda' /\eta}(\{y_i\}). \\
\end{align*}
\end{proof}

It follows from Lemma~\ref{lemma:Gamma_schur_property} that these
$\Gamma_{\pm}$ are the same vertex operators as used
in~\cite{Ok-Re, Ok-Re-Pearcy,Ok-Re-Va}.  We therefore have

\begin{lemma}  
\label{lemma:Gamma_commutation}
Let $\sigma, \tau = \pm 1$ and let $a, b$ be monomials in $q_0, \ldots, q_{n-1}$.  Then
\[
\Gamma_{\sigma}(a) \Gamma_{\tau}(b) = (1 - ab)^{\frac{\tau - \sigma}{2}}
\Gamma_{\tau}(b) \Gamma_{\sigma}(a).
\]
\end{lemma}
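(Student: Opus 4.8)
The plan is to verify the stated identity one matrix coefficient at a time, since two operators on $\mathcal{RP}$ (or its degree completion) coincide as soon as all their coefficients $\langle \mu \,|\, \cdot \,|\, \lambda ' \rangle $ with $\mu ,\lambda '\in \mathcal{P}$ agree. Applying the substitution $(\sigma ,a)\leftrightarrow (\tau ,b)$ to the claimed formula shows that the case $(\sigma ,\tau )=(-1,+1)$ is equivalent to the case $(\sigma ,\tau )=(+1,-1)$, so only two cases need to be treated: $\sigma =\tau $ and $(\sigma ,\tau )=(+1,-1)$. When $\sigma =\tau $ the exponent $(\tau -\sigma )/2$ vanishes and the assertion is simply that $\Gamma _\sigma (a)$ and $\Gamma _\sigma (b)$ commute; for $\sigma =+1$ this follows from Lemma~\ref{lemma:Gamma_schur_property} with $x_0=a$, $x_1=b$, and $x_i=0$ for $i\ge 2$, which gives $\langle \mu \,|\, \Gamma _+(a)\Gamma _+(b) \,|\, \lambda ' \rangle = s_{\lambda '/\mu }(a,b)$, a symmetric function of $a$ and $b$ and hence invariant under transposing the two operators. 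The case $\sigma =-1$ is identical, using the companion formula for $\Gamma _-$ in Lemma~\ref{lemma:Gamma_schur_property} and the symmetry of $s_{\mu /\lambda '}(a,b)$.

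For $(\sigma ,\tau )=(+1,-1)$ I would insert a resolution of the identity between the two operators. Directly from Definition~\ref{defn:Gamma_and_Q_operators}, the coefficient $\langle \mu \,|\, \Gamma _+(a) \,|\, \nu \rangle $ equals $a^{|\nu |-|\mu |}$ when $\nu \succ \mu $ and vanishes otherwise, which is exactly the single-variable skew Schur function $s_{\nu /\mu }(a)$; likewise $\langle \nu \,|\, \Gamma _-(b) \,|\, \lambda ' \rangle = s_{\nu /\lambda '}(b)$, while $\langle \mu \,|\, \Gamma _-(b) \,|\, \eta \rangle = s_{\mu /\eta }(b)$ and $\langle \eta \,|\, \Gamma _+(a) \,|\, \lambda ' \rangle = s_{\lambda '/\eta }(a)$. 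Consequently
\[
\langle \mu \,|\, \Gamma _+(a)\Gamma _-(b) \,|\, \lambda ' \rangle = \sum _{\nu } s_{\nu /\mu }(a)\, s_{\nu /\lambda '}(b), \qquad \langle \mu \,|\, \Gamma _-(b)\Gamma _+(a) \,|\, \lambda ' \rangle = \sum _{\eta } s_{\lambda '/\eta }(a)\, s_{\mu /\eta }(b),
\]
and these are exactly the two sides of the skew Cauchy identity
\[
\sum _{\nu } s_{\nu /\mu }(x)\, s_{\nu /\lambda '}(y) = \Big( \prod _{i,j}\tfrac{1}{1-x_iy_j} \Big) \sum _{\eta } s_{\lambda '/\eta }(x)\, s_{\mu /\eta }(y)
\]
of \cite[\S I.5]{MacDonald}, specialized to the single variables $x=a$ and $y=b$, for which the product collapses to $(1-ab)^{-1}$. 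Hence $\Gamma _+(a)\Gamma _-(b) = (1-ab)^{-1}\Gamma _-(b)\Gamma _+(a)$, which is the remaining case.

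The only real ingredient is the skew Cauchy identity; the rest is bookkeeping, and I expect the main (if modest) care to go into keeping the interlacing relations straight --- which factor carries $a$ versus $b$, and whether a given skew shape is constrained by $\prec $ or by $\succ $. One should also note that the sums over $\nu $ and $\eta $ are infinite but converge in the degree completion of $\mathcal{RP}$ on which $\Gamma _-$ naturally acts (the monomials $a$, $b$ having positive degree), so the rearrangements are legitimate. If one prefers to avoid citing \cite{MacDonald}, the same relation can be obtained by realizing $\Gamma _\pm $ as exponentials of Heisenberg generators on bosonic Fock space and invoking the Baker--Campbell--Hausdorff formula, via $\big[ \sum _{k>0}\tfrac{a^k}{k}\alpha _k,\ \sum _{k>0}\tfrac{b^k}{k}\alpha _{-k} \big] = \sum _{k>0}\tfrac{(ab)^k}{k} = -\log (1-ab)$; the price is setting up the free-boson formalism that this paper otherwise sidesteps.
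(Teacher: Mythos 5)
Your proof is correct, but it takes a different route from the paper's. The paper identifies $\Gamma_{\pm}$ with the standard free-field vertex operators (via Lemma~\ref{lemma:Gamma_schur_property}) and then invokes the commutation relation proved by writing $\Gamma_{\pm}$ as exponentials of Heisenberg generators and applying Campbell--Baker--Hausdorff, citing \cite[Lemma 31]{Bryan-Young} for the case $\sigma=-\tau$ --- i.e.\ exactly the ``alternative'' you sketch in your last sentence. Your main argument instead works entirely at the level of matrix coefficients: the equal-sign case reduces to the symmetry of $s_{\lambda'/\mu}(a,b)$ in its two arguments, and the mixed case becomes, after inserting a resolution of the identity, precisely the two sides of the skew Cauchy identity specialized to single variables, where the Cauchy product collapses to $(1-ab)^{-1}$. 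Your reduction of $(\sigma,\tau)=(-1,+1)$ to $(+1,-1)$ by relabelling is also fine, since the resulting relation is just the inverse of the first. What your approach buys is self-containedness relative to the combinatorial setup the paper has already built (one never needs the bosonic Fock space), at the price of importing the skew Cauchy identity from \cite[\S I.5]{MacDonald}; one should just be aware that some proofs of that identity themselves go through the operator formalism, so to keep the argument genuinely independent one should have in mind a combinatorial proof (e.g.\ via RSK). Your remark about convergence of the infinite sum over $\nu$ in the appropriate completion is a worthwhile point that the paper glosses over.
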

\begin{proof}
The identity is derived by expressing $\Gamma_{+}$ and $\Gamma_{-}$ as
the exponential of another operator, and then applying the
Campbell-Baker-Hausdorff theorem.  This is done for the case $\sigma =
-\tau$ in~\cite[Lemma 31]{Bryan-Young} and the other cases are
essentially the same.
\end{proof}

\begin{lemma} 
\label{lemma:Gamma_Q_commutation}
Let $z$ be a monomial in $q_0, \ldots, q_{n-1}$.  Then
\[
\Gamma_{\sigma}(z)Q_i = Q_i\Gamma_{\sigma}(zq_i^{\sigma}).
\]
\end{lemma}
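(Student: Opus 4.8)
The plan is to prove the identity by a direct verification on the basis of $\mathcal{RP}$. All operators in sight are $\mathcal{R}$-linear and $\mathcal{RP}$ is free on the set $\mathcal{P}$ of Young diagrams, so it suffices to check that $\Gamma_{\sigma}(z)Q_i$ and $Q_i\Gamma_{\sigma}(zq_i^{\sigma})$ agree when applied to an arbitrary partition $\lambda$; in fact they will agree summand-by-summand. I would split into the two cases $\sigma = +1$ and $\sigma = -1$ according to Definition~\ref{defn:Gamma_and_Q_operators}.

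For $\sigma = +1$: the left side gives $\Gamma_{+}(z)Q_i\lambda = q_i^{|\lambda|}\Gamma_{+}(z)\lambda = \sum_{\mu \prec \lambda} q_i^{|\lambda|} z^{|\lambda|-|\mu|}\mu$, while the right side gives $Q_i\Gamma_{+}(zq_i)\lambda = Q_i\sum_{\mu \prec \lambda}(zq_i)^{|\lambda|-|\mu|}\mu = \sum_{\mu \prec \lambda} z^{|\lambda|-|\mu|} q_i^{(|\lambda|-|\mu|)+|\mu|}\mu$, and $(|\lambda|-|\mu|)+|\mu| = |\lambda|$, so the two coincide term-by-term. For $\sigma = -1$ the computation is entirely analogous: $\Gamma_{-}(z)Q_i\lambda = \sum_{\mu \succ \lambda} q_i^{|\lambda|} z^{|\mu|-|\lambda|}\mu$, whereas $Q_i\Gamma_{-}(zq_i^{-1})\lambda = \sum_{\mu \succ \lambda} z^{|\mu|-|\lambda|}q_i^{-(|\mu|-|\lambda|)+|\mu|}\mu$, and $-(|\mu|-|\lambda|)+|\mu| = |\lambda|$, again matching.

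The only bookkeeping point — and it is not a genuine obstacle — is the behaviour of the exponent of $q_i$ when $Q_i$ is moved past $\Gamma_{\sigma}(z)$: on the summand indexed by $\mu$ this changes $q_i^{|\lambda|}$ into $q_i^{|\mu|}$, i.e.\ introduces a factor $q_i^{|\mu|-|\lambda|} = q_i^{\sigma \cdot (\text{exponent of } z)}$, since by the definitions the exponent of $z$ on that summand is $|\lambda|-|\mu|$ for $\sigma=+1$ and $|\mu|-|\lambda|$ for $\sigma=-1$. Replacing $z$ by $zq_i^{\sigma}$ inside $\Gamma_{\sigma}$ contributes exactly the compensating factor $q_i^{\sigma \cdot (\text{exponent of } z)}$, so the discrepancy cancels. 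Since the two operators thus agree on every basis vector of $\mathcal{RP}$, they are equal as operators, proving the lemma. (I note this is the colored analogue of the standard $\Gamma$–$Q$ commutation used in \cite{Ok-Re-Va, Bryan-Young}, and indeed for $n=1$ it reduces to that identity.)
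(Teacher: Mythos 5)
Your proof is correct and is essentially the paper's argument: both verify the identity by a direct term-by-term comparison of the action on basis partitions (the paper phrases this as an equality of matrix coefficients $\left\langle \lambda \left| \cdot \right| \mu\right\rangle$, which is the same computation). Nothing further is needed.
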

\begin{proof}
It is easy to check that, for any partitions $\lambda, \mu$, 
\begin{align*}
\left< \lambda \left|\Gamma_{+}(z)Q_i \right| \mu \right>
=
\left< \lambda \left|Q_i\Gamma_{+}(zq_i^{+1}) \right| \mu \right>
&=
\begin{cases}
z^{|\mu/\lambda|}q_i^{|\mu|}, & \lambda \subseteq \mu \\
0, &\lambda \not \subseteq \mu,
\end{cases} \\
\left< \lambda \left|\Gamma_{-}(z)Q_i \right| \mu \right>
=
\left< \lambda \left|Q_i\Gamma_{-}(zq_i^{-1}) \right| \mu \right>
&=
\begin{cases}
z^{|\lambda/\mu|}q_i^{|\mu|} & \mu \subseteq \lambda \\ 
0, &\mu \not \subseteq \lambda.
\end{cases}
\end{align*}
\end{proof}

We must also establish some notation for the edge sequence of the
partition $\nu$. Define the set $S(\nu)$ by
\[
S (\nu )=\{\nu_j - j - 1 \;|\; j \geq 0 \}.
\]
We define the \emph{edge sequence} of $\nu$: for $t \in \znums$,
\begin{equation}\label{eqn: edge sequence associated to partition}
\nu(t) = \begin{cases}
+1 &\text{if } t \in S (\nu ), \\
-1 &\text{if } t \not \in S (\nu ).
\end{cases}
\end{equation}
For example
\[
S (\emptyset ) = \{-1,-2,-3,\dotsc  \},\quad \quad  \emptyset (t) =\begin{cases}
+1 &t<0,\\
-1&t\geq 0.
\end{cases}
\]
Note that the complement of $S (\nu )$ is given by
\[
S (\nu )^{c} = -S (\nu  ')-1 = \{-\nu '_{j}+j\;   |\;  j\geq 0 \}.
\]

We use the following shorthand:

\begin{defn}
If $\alpha$ and $\beta$ are partitions, and $\sigma = \pm 1$, we write
$\alpha \spaceship{\sigma} \beta$ to mean
\[
\begin{cases}
\alpha \prec \beta &\text{if } \sigma = +1, \\
\alpha \succ \beta &\text{if } \sigma = -1. \\
\end{cases}
\]
\end{defn}

\subsection{Writing $\Vsf ^{n}_{\lambda \mu \nu }(q_{0},\dotsc ,q_{n-1})$ as a vertex operator product}
$\quad $\smallskip

Recall the following notation:
\begin{align*}
A_{\lambda } (k,n) &=\sum _{(i,j)\in \lambda } \left\lfloor
\frac{i+k}{n} \right\rfloor, \\
q^{-A_{\lambda } }&=\prod _{k=0}^{n -1} q_{k}^{-A_{\lambda }(k,n)}, \\
\frakq _{t} &= q^{-N} \prod _{k=-nN+1}^{t}q_{k} \quad \text{ for large } N, \text{ and} \\
q &= q_0\cdots q_{n-1}. \\
\end{align*}
Recall also that an overline denotes the exchange of variables
$q_{k}\leftrightarrow q_{-k}$ with subscripts in  $\znums_n$.

We will apply the following conventions for products of possibly
non-commuting operators. For operators $\Phi _{t}$ depending on $t\in
S\subset \znums $ we let
\[
\prod _{t\in S}^{\longrightarrow }  \Phi _{t}
\]
denote the product where $t$ increases from left to right in the order
the operators are written. We denote the retrograde expression as 
\[
\prod _{t\in S}^{\longleftarrow } \Phi _{t}.
\]

\begin{prop} \label{prop: vertex as operator expression}
The orbifold vertex is given by the following vertex operator
expression:
\[
\Vsf^{n}_{\lambda \mu \nu} = q^{-A_{\lambda}}
\overline{q^{-A_{\mu'}}}q_0^{-|\lambda|} \left< \mu \left|
\prod_{t}^{\longrightarrow}
\Gamma_{\nu'(t)}\left(\frakq_t^{-\nu'(t)}\right) \right| \lambda'
\right>.
\]
\end{prop}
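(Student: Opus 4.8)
The plan is to prove the identity by diagonal slicing, reducing the generating function $\Vsf^n_{\lambda\mu\nu}$ to a sum over chains of interlacing ordinary partitions and then recognising that sum as the stated vertex-operator matrix element. This is the orbifold refinement of the transfer-matrix argument of Okounkov--Reshetikhin--Vafa \cite[\S3]{Ok-Re-Va}, with the $n$-colour bookkeeping carried out as in \cite{Bryan-Young} but now with all three legs $\lambda,\mu,\nu$ present.

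First I would set up the slicing. Given a $3$D partition $\pi$ asymptotic to $(\lambda,\mu,\nu)$ (Definition~\ref{defn: 3D partition asympt to (a,b,c)}), cut it by the planes $\{i-j=t\}$ for $t\in\znums$; coordinatising each slice by $(\min(i,j),k)$ and subtracting the (infinite) contribution of the $k$-direction leg $\nu$ produces a genuine finite partition $\pi(t)$. The defining conditions of a $3$D partition then translate into the three facts that drive the argument: $\pi(t)$ stabilises to $\lambda'$ as $t\to+\infty$ and to $\mu$ as $t\to-\infty$ (the $i$- and $j$-direction legs); consecutive slices satisfy the interlacing relation $\pi(t-1)\spaceship{\nu'(t)}\pi(t)$, with the precise alternation of $\prec$ and $\succ$ dictated by the edge sequence $\nu'(t)$ of \eqref{eqn: edge sequence associated to partition} (for $\nu=\emptyset$ this is just the pyramid shape: slices grow up to $t=0$ and shrink afterwards); and conversely every such chain assembles back to a unique $\pi$. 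Establishing that the two limiting shapes are $\lambda'$ and $\mu$, rather than their conjugates or each other, is a matter of carefully tracking the transpose and orientation conventions built into Definition~\ref{defn: 3D partition asympt to (a,b,c)}.

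Next I would do the weight count and then assemble. Every box $(i,j,k)\in\pi$ has colour $i-j\bmod n$, which is constant along each diagonal slice, so $\prod_r q_r^{|\pi|_r}$ (with the renormalised colour counts of Definition~\ref{defn: Zn vertex}) should be extractable from the sequence $\bigl(|\pi(t)|\bigr)_t$: an Abel-summation identity using $\frakq_{t+1}/\frakq_t=q_{t+1}$ (indices mod $n$) turns $\prod_t\frakq_t^{\,|\pi(t-1)|-|\pi(t)|}$ into a monomial in the $q_r$, and the renormalisation $\xi_\pi=1-\#\{\text{legs}\}$ of \eqref{eqn: xi=1-number of legs} is exactly what makes the leg-subtractions of the previous step consistent and all the sums finite. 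Comparing, one finds that $\prod_t\frakq_t^{\,|\pi(t-1)|-|\pi(t)|}$ equals $\prod_r q_r^{|\pi|_r}$ up to the monomial $q^{-A_\lambda}\,\overline{q^{-A_{\mu'}}}\,q_0^{-|\lambda|}$, where $A_\lambda(k,n)=\sum_{(i,j)\in\lambda}\lfloor(i+k)/n\rfloor$ records the renormalised, colour-sorted contribution of the $i$-direction leg, $\overline{q^{-A_{\mu'}}}$ is the analogous contribution of the $j$-direction leg in the reflected variables, and $q_0^{-|\lambda|}$ absorbs the index shift coming from $\frakq_0=1$ and $\frakq_t=q_t\frakq_{t-1}$. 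Finally, by Definition~\ref{defn:Gamma_and_Q_operators} the matrix element $\langle\beta|\Gamma_\sigma(x)|\alpha\rangle$ equals $x^{\,||\alpha|-|\beta||}$ when $\alpha$ and $\beta$ interlace as prescribed by $\sigma$ and vanishes otherwise, so $\langle\mu\,|\,\prod_t^{\longrightarrow}\Gamma_{\nu'(t)}(\frakq_t^{-\nu'(t)})\,|\,\lambda'\rangle$ is literally the sum over all chains as above of $\prod_t\frakq_t^{\,|\pi(t-1)|-|\pi(t)|}$; combining this with the bijection and the weight comparison gives the formula. The infinite operator product causes no trouble, since between states of bounded size only finitely many factors act non-trivially and everything is defined term by term in $\mathcal R$.

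The main obstacle will be the bookkeeping in the weight comparison: fixing the conventions so that the endpoints of the chain really come out as $\lambda'$ and $\mu$, and checking that the divergent leg-sums renormalise to \emph{exactly} $q^{-A_\lambda}\,\overline{q^{-A_{\mu'}}}\,q_0^{-|\lambda|}$. The appearance of the unsymmetric factor $q_0^{-|\lambda|}$, rather than something like $\prod_r q_r^{-|\lambda|_r}$, is the most delicate point and should be traced entirely to the normalisation $\frakq_0=1$ together with the one-step shift in the recursion defining $\frakq_t$.
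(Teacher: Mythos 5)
Your proposal is correct and follows essentially the same route as the paper: diagonal slicing into a chain of partitions interlacing according to $\nu'(t)$ with limits $\lambda'$ and $\mu$, recognition of the chain sum as the transfer-matrix element, and renormalisation of the divergent leg contributions into the prefactor $q^{-A_{\lambda}}\,\overline{q^{-A_{\mu'}}}\,q_0^{-|\lambda|}$. The ``Abel summation'' you describe is exactly the paper's insertion and commutation of the $Q_t$ operators (Lemma~\ref{lemma:Gamma_Q_commutation}) after truncating at finite $N$, and the delicate boundary computation you defer is carried out there as the framing-factor computation of Lemma~\ref{lem:framingfactor}.
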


\begin{proof}
We first make a slight refinement to the definition of
$\Vsf^{n}_{\lambda \mu \nu}$, as follows: Fix an integer $N$, and set
\[
\Vsf ^{n,N}_{\lambda \mu \nu } = \sum _{\pi } q_{0}^{|\pi |_{0}}\dotsb q_{n-1}^{|\pi |_{n-1}}.
\]
where the sum is now taken over all 3D partitions $\pi $ asymptotic to
$(\lambda, \mu, \nu )$ such that any boxes $(i,j,k)$ not contained in
the $\lambda$-leg or the $\mu$-leg satisfy $i<nN, j<nN$.  It is clear
that
\[
\lim_{N \rightarrow \infty}\Vsf ^{n,N}_{\lambda \mu \nu }
= \Vsf ^{n}_{\lambda \mu \nu }
\]
in the sense that the low order terms of $\Vsf ^{n,N}_{\lambda \mu \nu
}$ and $\Vsf^{n}_{\lambda \mu \nu}$ agree.

Following the strategy of~\cite{Ok-Re, Bryan-Young}, we will calculate
this generating function as a matrix coefficient in a product of
vertex operators.  The simplest case, $\lambda = \mu = \nu =
\emptyset$, is done in full detail in~\cite{Bryan-Young}. The case
$n=1$ but with $\lambda, \mu, \nu$ arbitrary is handled in
\cite{Ok-Re}.

Consider, as a first approximation to $\Vsf ^{n,N}_{\lambda \mu \nu
}$, the expression

\begin{align}
\label{eqn:vertex_op_expression}
\left< \mu \left|
\prod_{-nN+1\leq t\leq nN-1}^{\longrightarrow }Q_t \Gamma_{\nu'(t)}(1)
\right| \lambda' \right>
\end{align}

Observe that, for each $t$, 
\[
Q_t \Gamma_{\nu'(t)}(1) \left| \gamma \right> = \sum_{\eta \spaceship{\nu'(t)} \gamma} q_{t}^{|\eta|} \left| \eta \right>.
\]

So, in other words, $Q_t \Gamma_{\nu'(t)}(1)$ sends a partition
$\left|\gamma\right>$ to a weighted formal sum of all partitions
$\left|\eta\right>$ such that $\gamma$ and $\eta$ are the $(t+1)$st
and $t$th slices, respectively, in a 3D partition.  In this sum, each
$\eta$ is weighted by $q_{t}^{|\eta|}$.  Since $Q_t\Gamma_{\nu'(t)}(1)$
is a linear operator, this property extends to linear combinations of
such $\left|\gamma \right>$, so
\[
Q_{t+1}\Gamma_{\nu'(t+1)}(1)Q_{t} \Gamma_{\nu'(t)}(1) \left|
\gamma\right\rangle = \sum_{\alpha \spaceship{\nu'(t+1)} \beta
\spaceship{\nu'(t)} \gamma } Q_{t+1}^{|\alpha|} Q_{t}^{|\beta|}
\left|\alpha\right>,
\]
and so forth.  Since the indices on the $Q_i$ operators are taken
modulo $n$, the $\left<\mu\right|$ coordinate of
\[
\prod^{\longrightarrow }_{-nN+1\leq t\leq nN-1}Q_t \Gamma_{\nu'(t)}(1)\left|\lambda'\right>
\]
counts sequences of $\znums_n$-weighted Young diagrams, interlacing
according to $\nu'$, \footnote{The easiest way to see that we must use
the edge sequence associated to $\nu '$ and not $\nu $ is to look at
Figure~\ref{fig:add_n_border_strip} and note that the $i$ and $j$ axes
not in the standard order so that we are looking at $\nu $ ``from the
bottom'' and hence getting $\nu '$.} beginning with $\lambda'$ and
ending with $\mu$, as does $\Vsf^{n,N}_{\lambda \mu \nu }$.  However,
there are two important differences between $\Vsf^{n,N}_{\lambda \mu
\nu }$ and~\eqref{eqn:vertex_op_expression}.

First, the contribution of the box $(i,j,k)$ in the 3D partition $\pi$
to $\Vsf^{n}_{\lambda \mu \nu}$ is $q_{i-j}^{\xi_{\pi}(i,j,k)}$, where
recall that
\[
\xi_{\pi}(i,j,k) = 1 - \# \text{ of legs of $\pi $ containing }
(i,j,k) .
\]
By contrast, \eqref{eqn:vertex_op_expression} assigns weight
$q_{i-j}^{\xi'_{\pi}(i,j,k)}$ where
\[
\xi'_{\pi}(i,j,k) = \begin{cases}
1 & (i,j,k) \in \pi \setminus \{\nu \text{ leg}\}, \\
0 & \text{otherwise}.
\end{cases}
\]
See Figure~(\ref{fig:venn_diagram_legs}) for a comparison of $\xi$ and
$\xi'$.

\begin{figure}
\caption{Differences in weighting between \eqref{eqn:vertex_op_expression} and $\Vsf^{n,N}_{\lambda \mu \nu}$}\label{fig:venn_diagram_legs}
\begin{center}
\input{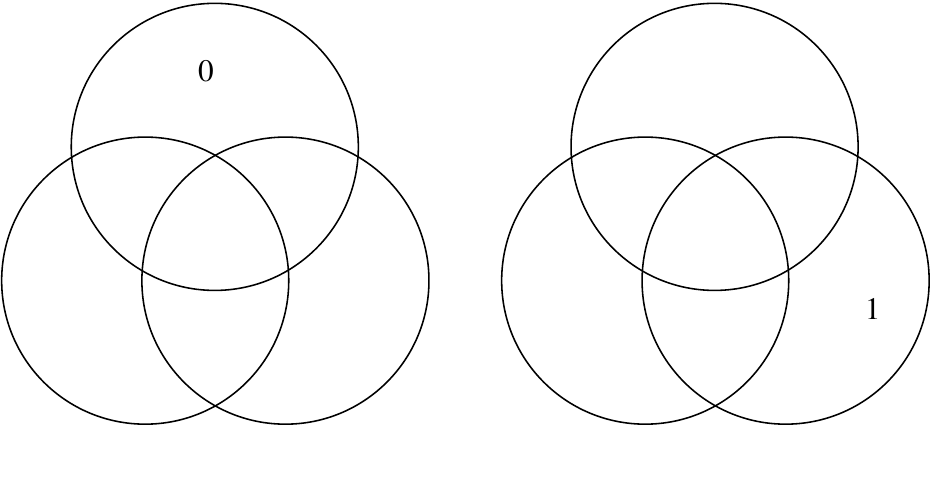_t}
\end{center}
\end{figure}
To account for this difference, we
divide~\eqref{eqn:vertex_op_expression} by the weight of the $\lambda$
and $\mu$ legs, $(q_0 \cdots q_{n-1})^{N(|\lambda| + |\mu|)}$.  This
may be achieved with the $Q$ operators:
\[
\left< \mu \left| Q^{-N} \prod_{-nN+1\leq t\leq nN-1}^{\longrightarrow
}Q_t \Gamma_{\nu'(t)}(1) Q^{-N} \right| \lambda' \right>
\]
Using the commutation relations of
Lemma~\ref{lemma:Gamma_Q_commutation}, we move the operators $Q_t$ to
the left if $t \leq 0$, and to the right if $t > 0$, giving
\begin{equation}
\label{eqn:normalized_vertex_op_expression} \left< \mu \left| \left(
\prod_{-nN+1\leq t\leq nN-1}^{\longrightarrow } \Gamma_{\nu'(t)}
(\frakq_t^{-\nu'(t)}) \right)Q_0^{-1} \right| \lambda' \right>.
\end{equation}

The second difference between
\eqref{eqn:normalized_vertex_op_expression} and $\Vsf^{n,N}_{\lambda
\mu \nu}$ is that each partition in~\eqref{eqn:vertex_op_expression}
has a contribution from the boxes which lie inside the $\lambda$ or
$\mu$-leg, outside the region $i,j \leq n$, and inside the region $|i
-j| \leq n$; these are the regions in
Figure~\ref{fig:normalization_framing} at the left and right sides of
the first picture, whose projections to the $xy$ plane are triangular,
and whose cross-sections, when viewed from the left, are $\lambda$ and
$\mu'$.

The weights contributed by these regions are $q^{A_{\lambda}}$ and
$\overline{q^{A_{\mu'}}}$, as explained in
Lemma~\ref{lem:framingfactor} below.  In the non-orbifold case,
\cite{Ok-Re} refers to these constants as \emph{framing factors}.  The
terms from the corresponding partitions in $\Vsf^{n,N}_{\lambda \mu
\nu}$ do not have this contribution.

At this point we have nearly proven the proposition.  We have 
\[
\Vsf^{n,N}_{\lambda \mu \nu} = q^{-A_{\lambda}}
\overline{q^{-A_{\mu'}}}q_0^{-|\lambda|} \left< \mu \left| \prod_{
-nN+1\leq t\leq nN-1}^{\longrightarrow }
\Gamma_{\nu'(t)}\left(\frakq_t^{-\nu'(t)}\right) \right| \lambda'
\right> + \text{error}
\]
where the expressions in both sides assign the same weight to a 3D
partition.  All that remains is to understand the ``error'' term:
$\Vsf^{n,N}_{\lambda \mu \nu}$
and~\eqref{eqn:normalized_vertex_op_expression}, written as formal
sums over 3D partitions, are not supported on the same index set.  In
particular, \eqref{eqn:normalized_vertex_op_expression} includes
contributions from 3D partitions which have boxes outside of $[0,N]
\times [0,N] \times [0,\infty]$ but inside the region $|x-y| < N$.
However, the smallest such 3D partition grows without bound as $N$
grows large, so the error term disappears in the large-$N$ limit.
\end{proof}

\begin{figure}
\caption{A 3D partition which fits within an $N \times N \times
\infty$ box, compared with the corresponding sequence of $2N+1$ Young
diagrams.}\label{fig:normalization_framing}
\begin{center}
\input{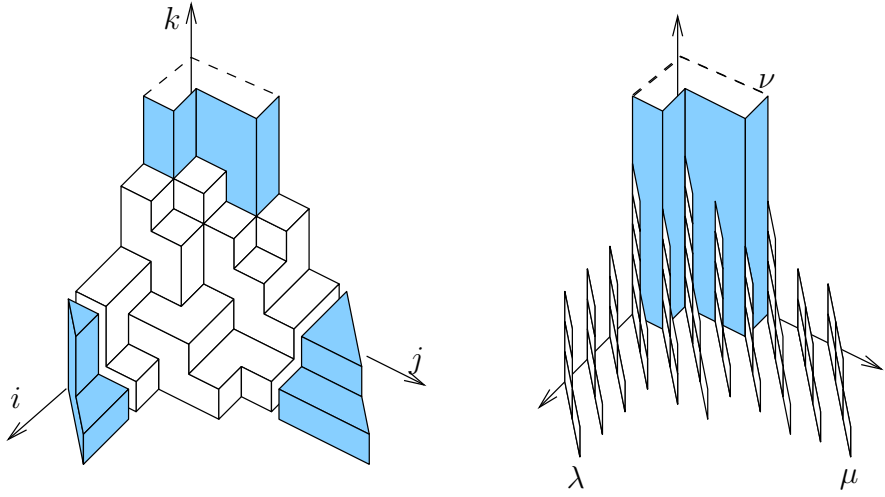}
\end{center}
\end{figure}

\begin{lemma}
\label{lem:framingfactor}
Let $L,M \subseteq (\znums_{\geq 0})^3$ be the regions
\begin{align*}
L&=\left\{ (i,j,k) \;|\; (j,k) \in \lambda, i > nN-1, i-j \leq nN-1\right\},\\
M&=\left\{ (i,j,k) \;|\; (i,k) \in \mu ', j > nN-1, i-j \geq
-nN+1\right\}.
\end{align*}
Then 
\begin{align*}
\prod_{(i,j,k) \in L} q_{i-j}& = q^{A_{\lambda}},&
\prod_{(i,j,k) \in M} q_{i-j} &= \overline{q^{A_{\mu '}}}.
\end{align*}
\end{lemma}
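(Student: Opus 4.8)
The proof is a direct lattice-point count, and I would prove the two identities in parallel: the statement for $M$ is essentially the image of the statement for $L$ under the conjugation $\lambda\leftrightarrow\mu'$ together with the bar involution $q_k\leftrightarrow q_{-k}$, and the only thing that has to be tracked carefully is the matching of mod-$n$ residues with the floor functions in the definition of $A_{(-)}(k,n)$ from \S\ref{subsec: the vertex formula}.

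For $L$, the plan is to group the product $\prod_{(i,j,k)\in L}q_{i-j}$ according to which box $(a,b)$ of $\lambda$ sits below a given element of $L$ (so $(j,k)=(a,b)$). For fixed $(a,b)\in\lambda$ the elements of $L$ lying over it are exactly the boxes $(i,a,b)$ with $nN\le i\le nN-1+a$, so the exponents $i-j=i-a$ that occur run over the $a$ consecutive integers $nN-a,\dots,nN-1$. Working modulo $n$ and using $nN\equiv 0$, one checks by an elementary count — an integer $\equiv k\pmod n$ in the translated interval $[-a,-1]$ has the form $k-nm$ with $1\le m\le\lfloor(a+k)/n\rfloor$ — that for each residue $k\in\{0,\dots,n-1\}$ exactly $\lfloor(a+k)/n\rfloor$ of these integers are $\equiv k\pmod n$. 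Summing over $(a,b)\in\lambda$, the exponent of $q_k$ in $\prod_{(i,j,k)\in L}q_{i-j}$ becomes $\sum_{(a,b)\in\lambda}\lfloor(a+k)/n\rfloor=A_\lambda(k,n)$, which is precisely $q^{A_\lambda}$ under the product convention; a comparison with Figure~\ref{fig:normalization_framing} confirms that $L$ is exactly the triangular region described there.

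For $M$ the argument is identical after recording the conjugation: for a box $(a,b)$ of $\mu'$ (so $(i,k)=(a,b)$) the elements of $M$ over it are $(a,j,b)$ with $nN\le j\le nN-1+a$, hence the occurring exponents $i-j=a-j$ run over the $a$ consecutive integers $1-nN,\dots,a-nN$, i.e. over the residues $1,2,\dots,a\pmod n$. The same elementary count gives that residue $k$ occurs $\lfloor(a+n-k)/n\rfloor$ times for $k=1,\dots,n-1$ and $\lfloor a/n\rfloor$ times for $k=0$, so summing over $(a,b)\in\mu'$ the exponent of $q_k$ is $A_{\mu'}(n-k,n)$ for $k\ne0$ and $A_{\mu'}(0,n)$ for $k=0$. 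On the other hand, applying $q_k\mapsto q_{-k}$ to $q^{A_{\mu'}}=\prod_{j=0}^{n-1}q_j^{A_{\mu'}(j,n)}$ moves the exponent $A_{\mu'}(j,n)$ onto $q_{n-j}$, i.e. puts exponent $A_{\mu'}(n-k,n)$ on $q_k$ for $k\ne0$ and leaves $A_{\mu'}(0,n)$ on $q_0$; this is exactly the count from $M$, so $\prod_{(i,j,k)\in M}q_{i-j}=\overline{q^{A_{\mu'}}}$.

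There is no serious obstacle here; the only place demanding care is the combined bookkeeping of the mod-$n$ residues and the off-by-one in the floor counts, and keeping the conjugation $\mu\mapsto\mu'$ and the bar involution straight in the $M$ case. Once the interval of exponents attached to each box $(a,b)$ is identified correctly, both identities reduce to the single elementary fact used above: the number of integers of a fixed residue modulo $n$ in an interval of $a$ consecutive integers is given by the appropriate floor $\lfloor(a\pm k)/n\rfloor$.
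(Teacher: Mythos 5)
Your proof is correct and follows essentially the same route as the paper's: a direct count of how many boxes of each color $k \bmod n$ lie in the triangular regions $L$ and $M$, reduced to the elementary fact that an interval of $a$ consecutive integers contains $\lfloor (a\pm k)/n\rfloor$ integers in a given residue class. The only cosmetic difference is that the paper first slices $L$ and $M$ into diagonals $L_t=\{i-j=t\}$ of constant color and then interchanges the order of the double product, whereas you fix a box $(a,b)$ of $\lambda$ (resp. $\mu'$) first; after the interchange the two computations coincide box-by-box.
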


\begin{proof}
Let $L_t$ denote the diagonal slice 
\[L_t = \{ (i,j,k) \in L\;|\; i-j = t\}. \]
Observe that when $t > nN-1$, $L_t$ is the empty set.  Moreover,
$L_{nN-1}$ is the largest of the $L_t$; it consists of boxes $(nN-1+j,
j, k)$ where $(j,k) \in \lambda$ and $j \geq 1$ (see
Figure~\ref{fig:framingfactor}).  Each of these boxes contributes
weight $q_{-1}$ to $\prod_L q_{i-j}$, since the subscripts of $q$ are
taken mod $n$.  Similarly, for $c > 0$, 
\[
L_{nN-c} = \{ (nN-c+j, j, k) \; |\; (j,k) \in \lambda,\; j \geq c\}
\]
where each box in $L_{nN-c}$ has color $-c$. It follows that
\begin{align*}
\prod_{(i,j,k) \in L} q_{i-j} 
&= \prod_{c=1}^{\infty} \prod_{(i,j,k) \in L_{nN-c}} q_{-c} \\
&= \prod_{m=0}^{\infty} \prod_{\widetilde{c} = 1}^n \prod_{\substack{(j,k) \in \lambda\\j \geq nm + \widetilde{c}}}q_{-\widetilde{c}} \\
&= \prod_{\tilde {c}=1}^{n} \prod_{(j,k) \in \lambda} \prod_{m=0}^{\left\lfloor \frac{j-\tilde {c}}{n}  \right\rfloor} q_{-\widetilde{c}} \\
&= \prod_{\widetilde{c} = 1}^{n} \prod_{(j,k) \in \lambda}q_{-\widetilde{c}}^{\left\lfloor \frac{j-\tilde {c}}{n}  \right\rfloor + 1} \\
&= \prod_{c = 0}^{n-1} \prod_{(j,k) \in \lambda}
q_{c}^{\left\lfloor \frac{j+c}{n}  \right\rfloor}.
\end{align*}
The second line uses the fact that the subscripts of the $q_i$ are
taken modulo $n$.  In the last line we changed variables by
$\widetilde{c} \mapsto n - c$. The end result is precisely equal to
$q^{A_{\lambda}}$ as defined in Section~\ref{subsec: the vertex
formula}.

Similarly, let 
\[
M_t = \{(i,j,k) \in M \;|\; i-j = t \}.
\]
When $t < -nN + 1$, $M_t$ is empty; otherwise, for $c > 0$,
\[
M_{-nN+c} = \{ (i, nN - c + i, k) \;|\; (i,k) \in \mu', i \geq c \},
\]
where each box in $M_{-nN+c}$ has color $c$. It follows that
\begin{align*}
\prod_{(i,j,k) \in M} q_{i-j} &= \prod_{c=1}^{\infty} \prod_{(i,j,k) \in M_{-nN+c}}q_c \\
&= \prod_{m=0}^{\infty} \prod_{\tilde{c} = 1}^{n} \prod_{\substack{(i,k) \in \mu' \\ i \geq nm + \tilde{c}}} q_{\tilde{c}} \\
&= \prod_{\tilde{c} = 1}^\infty\prod_{(i,k) \in \mu'} \prod_{m=0}^{\left\lfloor\frac{i - \tilde{c}}{n} \right\rfloor} q_{\widetilde{c}} \\
&= \prod_{\tilde{c} = 1}^{n} \prod_{(i,k) \in \mu'}q_{\tilde{c}}^{\left\lfloor\frac{i - \tilde{c}}{n} \right\rfloor + 1} \\
&= \prod_{c=0}^{n-1} \prod_{(i,k) \in \mu'} q_{-c}^{\left\lfloor \frac{(i+c)}{n}\right\rfloor} \\
&= \overline{q^{A_{\mu'}}}.
\end{align*}

\end{proof}

\begin{figure}
\caption{Computation of the framing factor associated to $\lambda$}
\label{fig:framingfactor}
\begin{center}
\input{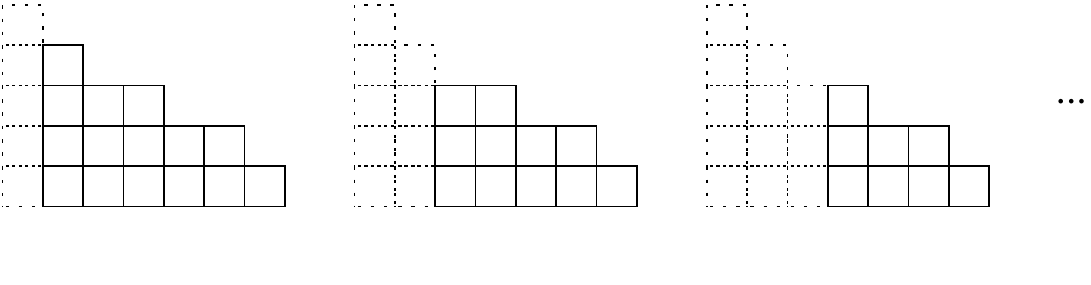_t}
\end{center}
\end{figure}

\subsection{$n$-quotient, $n$-core, and the retrograde}

\begin{figure}
\caption{A three-core $\nu$, viewed as a partition and as a triple of integers $(3, -2, -1)$ summing to zero.  Note that $\nu_0, \nu_1, \nu_2$ are empty partitions.}\label{fig:threecore}
\begin{center}
\input{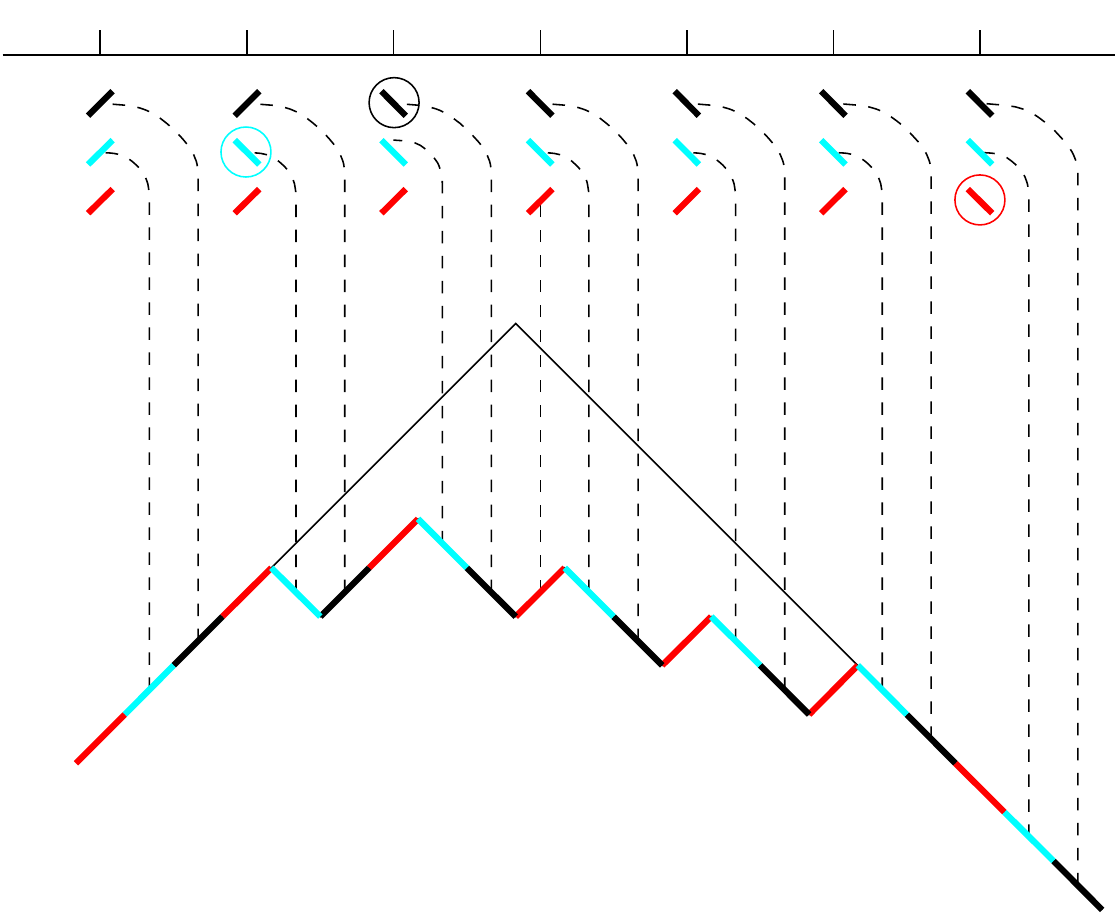_t}
\end{center}
\end{figure}

$\quad $
\smallskip

\subsubsection{Edge sequences and charge}
Let $\underline{\nu}: \znums  \rightarrow \{\pm 1\}$ be a function
satisfying $\underline{\nu}(t) = -1$ for $t \gg 0$, and
$\underline{\nu}(t) = 1$ for $t \ll 0$. We say that $\underline{\nu }
(t)$ is an \emph{edge sequence}, and to such a sequence we associate
its \emph{slope diagram} which consists of the graph of a continuous,
piecewise linear function having slopes $\pm 1$, such that the slope
of the function at $t$ is given by $\underline{\nu } (t)$ and such the
changes in slope occur at half-integers. 

The slope diagram associated to a sequence $\underline{\nu }$
determines a Young diagram and hence a partition. The Young diagram is
given by rotating the slope diagram 135 degrees counterclockwise and
translating so that the positive $x$ and $y$ axes eventually coincide
with the rotated slope diagram. Note that this association is
consistent with the edge sequence $\nu (t)$ associated with a
partition $\nu $ as defined in equation~\eqref{eqn: edge sequence
associated to partition}. However, there are many edge sequences
having the same associated partition. If $\underline{\nu } (t)$ is an
edge sequence having associated partition $\nu $, then there exists a
unique integer $c (\underline{\nu })\in \znums $ such that
\[
\underline{\nu }  = R^{c (\underline{\nu })} \nu  
\]
where $R$ is the right-shift operator, which acts on an edge sequence $\underline{\eta}$ by
\[
R\underline{\eta}(t) = \underline{\eta}(t-1).
\]
We call $c (\underline{\nu })$ the \emph{charge } of $\underline{\nu
}$.  The edge sequence $\nu(t)$ associated to a partition by
equation~\eqref{eqn: edge sequence associated to partition} always has
charge zero; we adopt the convention an edge sequence without an
underline always has charge zero.  The uniqueness of $c
(\underline{\nu })$ implies that the map
\begin{align}\label{eqn: bijections between edge seqs and partitions,charge}
\{\text{edge sequences}\} &\rightarrow \{\text{partitions}\} \times \znums \\
\underline{\nu}(t) &\mapsto (\nu, c(\underline{\nu}(t))\nonumber 
\end{align}
is a bijection, so we will use these notations interchangeably.

\subsubsection{Ribbons, the $n$-quotient, and $n$-core} 
$\quad $\smallskip

There is an operation known as \emph{adding a ribbon} to an edge
sequence $\underline{\nu}$.  Fix $t_1 < t_2$ with
$\underline{\nu}(t_1) = 1, \underline{\nu}(t_2) = -1$ (there are
infinitely many such pairs $(t_1, t_2)$).  Then construct a new edge
sequence $\underline{\rho}$ such that
\[
\underline{\rho}(t) = \begin{cases}
-1 & t=t_1 \\
+1 & t=t_2 \\
\underline{\nu}(t) &\text{otherwise}.
\end{cases}
\]
If $\nu$ and $\rho$ are the Young diagrams associated to
$\underline{\nu}$ and $\underline{\rho}$, then the set-theoretic
difference $\rho - \nu$ is a connected strip of boxes which contains
no $2\times2$ region, commonly called a \emph{ribbon}, \emph{border
strip} or \emph{rim hook} in the combinatorics literature; we shall
use the term to refer to either the strip of boxes or to the endpoints
$(t_1, t_2)$, according to whether we are speaking of Young diagrams
or edge sequences.  We say that the ribbon is of \emph{length}
$t_2-t_1$ and to lie at \emph{position} $t_1$.  It is easy to check
that adding a ribbon does not affect the charge of an edge sequence.

Observe that any charge-zero edge sequence can be constructed from
$\underline{\emptyset}$ by adding ribbons of length 1.  This
corresponds to adding boxes to a Young diagram in such a way that the
result remains a Young diagram.  

If $\underline{\nu }$ is an edge sequence, we define its
\emph{associated $n$-tuple} $(\underline{\nu}_0, \ldots,
\underline{\nu}_{n-1})$ of edge sequences by
\[
\underline{\nu}_i(t) = {\underline{\nu }}(nt + i).
\]
Letting $(\nu_i, c_i)=\underline{\nu}_i$ under the bijection
\eqref{eqn: bijections between edge seqs and partitions,charge}, we
then define the \emph{$n$-quotient} and the \emph{$n$-core} of
$\underline{\nu } $ to be $(\nu_{0}, \ldots, \nu_{n-1})$ and $(c_0,
\ldots, c_i)$ respectively. 

The process of passing from an edge sequence to its $n$-core and
$n$-quotient is reversible: there is a unique way to construct an edge
sequence $\underline{\nu }$ with a prescribed $n$-core and
$n$-quotient.  As such, we identify $\underline{\nu }$ with its
$n$-quotient together with its $n$-core:
\[
\underline{\nu } \leftrightarrow ((\nu_0, \ldots, \nu_{n-1}), (c_0,
\ldots, c_{n-1})).
\]
If the edge sequence $\underline{\nu }$ is charge zero (i.e. it came
from a partition), then $\sum_i c_i = 0$. Customarily, one only
considers $n$-cores arising from partitions, and so unless otherwise
stated, we will assume that all $n$-cores 
satisfy $\sum _{i}c_{i}=0$.  One special case is worthy of
note.  The partition whose $n$-quotient is $c = (c_0, \ldots,
c_{n-1})$ and whose $n$-quotient is $(\emptyset, \ldots, \emptyset)$
is often identified with $c$, and is customarily also called an
$n$-core.

Note that adding an $n$-hook to $\nu$ at position $t \equiv t_0
\pmod{n}$ corresponds to adding a 1-hook (i.e. a single box) to
$\nu_{t_0}$, without altering the $n$-core, or any of the other
$\nu_i$.  It follows that the $n$-core of $\nu$ is the (unique)
partition obtained by iteratively removing $n$-hooks from $\nu$ until
it is impossible to do so.

Let $R_k$ be the operator which acts on an edge sequence
$\underline{\nu}$ by right-shifting the $k$th component of the
associated $n$-tuple of $\underline{\nu}$:
\[
R_k(\underline{\nu}_0, \underline{\nu}_1, \ldots,
\underline{\nu}_{n-1}) =(\underline{\nu}_0, \underline{\nu}_1, \ldots,
R\underline{\nu}_k, \ldots, \underline{\nu}_{n-1}).
\]
Note that $R_k$ increases the charge of $\underline{\nu }$ by one.  It
follows that the operator $R_k R_{k+1}^{-1}$ leaves the charge of
$\underline{\nu }$ unaffected, so it restricts to an operator on
partitions and hence defines an operator on $\mathcal{RP}$.  The
effect of ${R}_k R_{k+1}^{-1}$ is to leave the $n$-quotient of
$\underline{\nu }$ unaffected, while incrementing $c_k$ and
decrementing $c_{k+1}$.  Moreover, the operators ${R}_k R_{k+1}^{-1}$
and their inverses, acting on $\emptyset$, are sufficient to generate
any $n$-core. Indeed, if $\nu $ is an $n$-core $(c_{0},\dotsc
,c_{n-1})$, then the associated edge sequence is given by
\[
\nu  = \prod _{i=0}^{n-1} R^{c_{i}}_{i}\emptyset .
\]

\begin{remark}\label{rem: induction strategy for proofs using
n-quotients}We can prove statements about partitions inductively, in
the following manner.  To prove the statement $P(\nu)$:
\begin{enumerate}
\item Prove $P(\emptyset)$.
\item Prove that $P(\nu) \Leftrightarrow P(R_k R_{k+1}^{-1}\nu)$ for each $k$.
\item Prove that $P(\nu) \Rightarrow P(\rho)$, where $\rho$ is any partition obtained from $\nu$ by adding a ribbon.
\end{enumerate}
Proving (1) and (2) establishes $P$ for all $n$-core partitions, and
then (3) extends the proof to all partitions.
\end{remark}
 
\begin{figure}
\caption{Applying $R_0  R^{-1}_1$ to a 4-core generates a new 4-core, increasing the weight by $q_{1}^{-1}q$.}\label{fig:fourcoreenlarge}
\begin{center}
\input{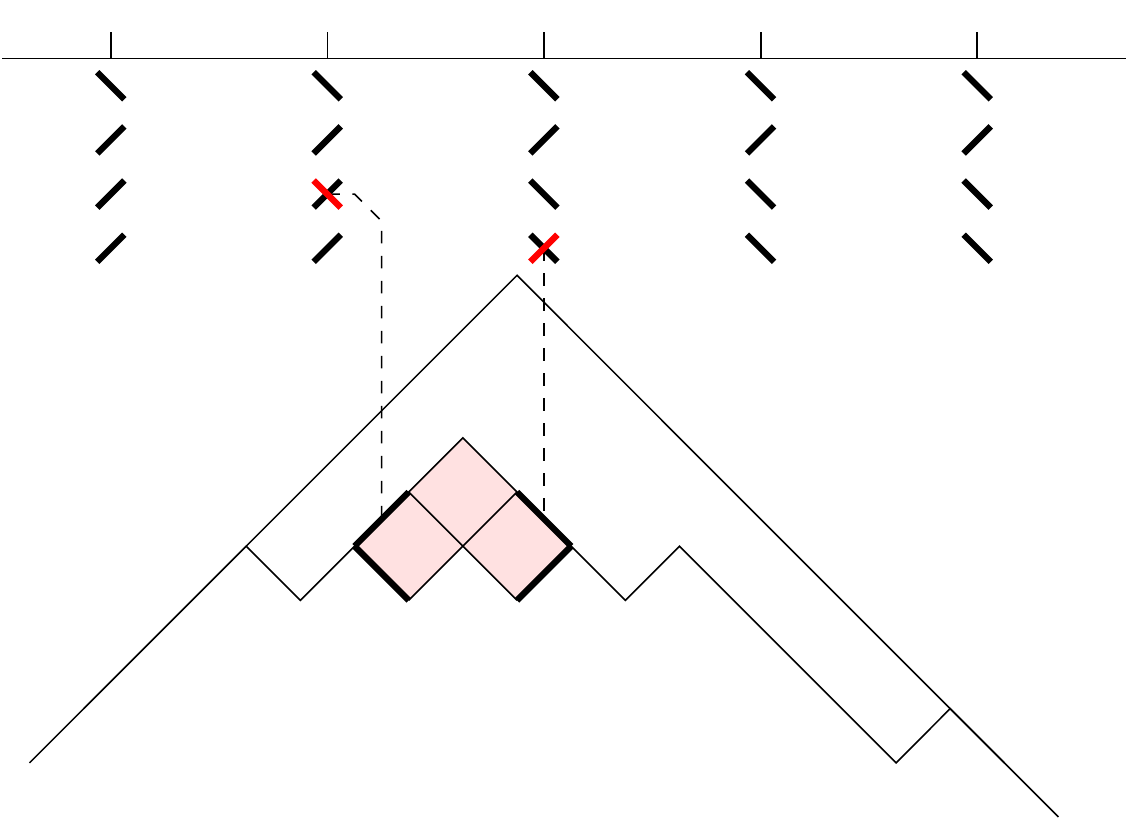_t}
\end{center}
\end{figure}

\subsubsection{Comparison of the operator with its retrograde.}

\begin{prop}\label{prop: retrograde prop}
The operator expression appearing in Proposition~\ref{prop: vertex as
operator expression} can be written in terms of its retrograde and a
scalar operator, namely
\[
\prod_{t}^{\longrightarrow}
\Gamma_{\nu'(t)}\left(\frakq_t^{-\nu'(t)}\right) = V^{n}_{\emptyset
\emptyset \emptyset} \cdot O_{\nu} \cdot \Mon _{\nu '}^{-1}\cdot
\prod_{t}^{\longleftarrow}
\Gamma_{\nu'(t)}\left(\frakq_t^{-\nu'(t)}\right)
\]
where
\begin{align*}
O_{\nu } &= \prod _{k=0}^{n-1}\Vsf ^{n}_{\emptyset \emptyset \emptyset } (q_{k},q_{k+1},\dotsc ,q_{n+k-1})^{-2|\nu |_{k} + |\nu |_{k+1}+ |\nu |_{k-1}},\\
\Vsf ^{n}_{\emptyset \emptyset \emptyset } &= M (1,q)^{n}\prod
_{0<a\leq b<n} M (q_{a}\dotsb q_{b},q) M (q^{-1}_{a}\dotsb q^{-1}_{b},q), \\
M(v,q) &= \prod _{m=1}^{\infty } \frac{1}{(1-vq^{m})^{m}},\\
\Mon _{\nu '}&= (-1)^{|\nu |} \prod _{(j,i)\in \nu '} \prod _{s=0}^{n-1} q_{s}^{h^{s}_{\nu '} (j,i)},\text{ and}\\
h^{s}_{\nu '} (j,i) &= \text{the number of boxes of color $s$ in the $(j,i)$-hook of $\nu '$}.
\end{align*}
\end{prop}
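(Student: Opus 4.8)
The plan is to reverse the non‑commuting product appearing in Proposition~\ref{prop: vertex as operator expression} using the commutation relation of Lemma~\ref{lemma:Gamma_commutation}, and then to identify the resulting scalar with $\Vsf ^{n}_{\emptyset \emptyset \emptyset }\cdot O_{\nu }\cdot \Mon _{\nu '}^{-1}$. Throughout I would work with the finite truncation $-nN+1\le t\le nN-1$ used in the proof of Proposition~\ref{prop: vertex as operator expression}, so that every product is finite, passing to the limit $N\to\infty$ only at the end. Writing $\Phi _t=\Gamma _{\nu '(t)}(\frakq _t^{-\nu '(t)})$ and using $\Gamma _\sigma (a)\Gamma _\tau (b)=(1-ab)^{(\tau -\sigma )/2}\Gamma _\tau (b)\Gamma _\sigma (a)$, reversing the word $\prod_t^{\longrightarrow}\Phi _t$ into $\prod_t^{\longleftarrow}\Phi _t$ by adjacent transpositions costs exactly one scalar factor per unordered pair, so $\prod_t^{\longrightarrow}\Phi _t=S_\nu \cdot \prod_t^{\longleftarrow}\Phi _t$ with $S_\nu =\prod_{t_1<t_2}(1-\frakq _{t_1}^{-\nu '(t_1)}\frakq _{t_2}^{-\nu '(t_2)})^{(\nu '(t_2)-\nu '(t_1))/2}$. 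The exponent vanishes unless $\nu '(t_1)\neq \nu '(t_2)$, so $S_\nu $ factors as a product over \emph{descent pairs} ($\nu '(t_1)=+1,\nu '(t_2)=-1$) and \emph{ascent pairs} ($\nu '(t_1)=-1,\nu '(t_2)=+1$); here $\frakq _{t_1}^{-1}\frakq _{t_2}=q_{t_1+1}\cdots q_{t_2}$ is a positive monomial depending only on $t_2-t_1$ and $t_1 \bmod n$.

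The second step isolates $\Mon _{\nu '}^{-1}$ from the ascent pairs. These are in bijection with the cells of $\nu '$: the cell with $(j,i)$‑hook gives an ascent pair with $t_2-t_1$ equal to its hook length and $\frakq _{t_1}\frakq _{t_2}^{-1}=\prod_s q_s^{-h^{s}_{\nu '}(j,i)}$, and there are $|\nu |$ of them. Applying $1-w^{-1}=-w^{-1}(1-w)$ to each such factor, the product over ascent pairs becomes $\Mon _{\nu '}^{-1}\cdot \prod_{(j,i)\in \nu '}\bigl(1-\prod_s q_s^{h^{s}_{\nu '}(j,i)}\bigr)$. Thus the Proposition reduces to the purely scalar identity $S^{+-}_\nu =\Vsf ^{n}_{\emptyset \emptyset \emptyset }\cdot O_\nu \cdot H_{\nu '}$, where $S^{+-}_\nu $ is the descent‑pair product and $H_{\nu '}$ is as in Theorem~\ref{thm: formula for Z_n vertex}; explicitly,
\[
\prod_{\substack{t_1<t_2\\ \nu '(t_1)=+1,\ \nu '(t_2)=-1}}\Bigl(1-\textstyle\prod_{k=t_1+1}^{t_2}q_k\Bigr)^{-1}\;=\;\Vsf ^{n}_{\emptyset \emptyset \emptyset }\cdot O_\nu \cdot \prod_{(j,i)\in \nu '}\Bigl(1-\textstyle\prod_s q_s^{h^{s}_{\nu '}(j,i)}\Bigr)^{-1}.
\]

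To prove this I would use the induction of Remark~\ref{rem: induction strategy for proofs using n-quotients} applied to the statement $P(\nu )$ = "$S^{+-}_\nu =\Vsf ^{n}_{\emptyset \emptyset \emptyset }O_\nu H_{\nu '}$". For the base case $P(\emptyset )$ one has $O_\emptyset =H_\emptyset =1$ and the descent pairs are exactly $\{(t_1,t_2):t_1<0\le t_2\}$; grouping by $\ell =t_2-t_1$ (which occurs $\ell $ times, with monomials the $\ell $‑step runs of the $q_k$ modulo $n$) collapses the infinite product to precisely $M(1,q)^n\prod_{0<a\le b<n}M(q_a\cdots q_b,q)M(q_a^{-1}\cdots q_b^{-1},q)$. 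For the two inductive steps, $\Vsf ^{n}_{\emptyset \emptyset \emptyset }$ is constant, so one must match the change in $S^{+-}_\nu $ against the change in $O_\nu H_{\nu '}$ when $\nu $ is modified by $R_kR_{k+1}^{-1}$ (fixing the $n$‑quotient, shifting the $n$‑core) or by adding a ribbon (adding one box to one $n$‑quotient component). In both cases the edge sequence of $\nu '$ changes by two flips or by a shift of one residue class, and the key device is the telescoping observation that, for $\eta (\cdot )\in \{\pm 1\}$, the difference of indicators "$\eta (t)=-1,\eta (t+\ell )=+1$" minus "$\eta (t)=+1,\eta (t+\ell )=-1$" equals $\tfrac12(\eta (t+\ell )-\eta (t))$; summing over $t\equiv r \bmod n$ shows that the exponent of each $(1-m)$ in $S_\nu $ depends only on $\ell \bmod n$, on $t_1\bmod n$, on the hook multiset of $\nu '$, and on the colored content $|\nu |_k$. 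These exponents are then matched term by term against the defining exponents $-2|\nu |_k+|\nu |_{k+1}+|\nu |_{k-1}$ of $O_\nu $ and the hook data of $H_{\nu '}$.

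The genuinely delicate point is Step 3: identifying the framing factor $O_\nu $ as precisely the \emph{deviation} of the descent‑pair product of $\nu '$ from that of $\emptyset $ — that is, establishing the exponent identity that matches $\tfrac12\bigl(\sum_{s\equiv r+\ell}(\nu '(s)-\emptyset (s))-\sum_{s\equiv r}(\nu '(s)-\emptyset (s))\bigr)$ against the combinatorially defined $O_\nu $ — together with the bookkeeping of the base‑case product identity. The operator manipulations of Steps 1–2 and the $N\to\infty$ convergence (the descent product is genuinely infinite, while the ascent product stabilizes to the hooks of $\nu '$) are routine once everything is in product form.
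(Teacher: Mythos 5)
Your proposal follows essentially the same route as the paper's proof: reverse the word using Lemma~\ref{lemma:Gamma_commutation}, peel off $\Mon_{\nu'}^{-1}$ from the finitely many hook (ascent) pairs, and establish the remaining scalar identity via the base case $\nu=\emptyset$ and the $n$-core/$n$-quotient induction of Remark~\ref{rem: induction strategy for proofs using n-quotients}. The only differences are organizational --- the paper recombines your ascent remainder $H_{\nu'}^{-1}$ with the descent product into the single symmetric product $\cC(\nu')=\prod_{t<t'}\left(1-\frakq_t^{-1}\frakq_{t'}\right)^{\frac{1}{2}(\nu'(t')-\nu'(t))}$, which is then \emph{literally} invariant under adding an $n$-ribbon (rather than invariant only as a ratio), so the ribbon step of the induction becomes cleaner --- and the two inductive computations you flag as the delicate point (ribbon invariance and the $R_kR_{k+1}^{-1}$ step matched against $O_\nu$) are precisely where the paper spends its effort.
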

\begin{proof}
Replacing the product with its retrograde has the effect of reversing
the order of every pair of operators
$\Gamma_{\nu'(t)}(\frakq_t^{-\nu'(t)})$,
$\Gamma_{\nu'(t')}(\frakq_{t'}^{-\nu'(t')})$ for $t'>t$.  By
Lemma~\ref{lemma:Gamma_commutation}, this introduces a scalar factor:
\begin{equation*}
\prod_{t}^{\longrightarrow}
\Gamma_{\nu'(t)}\left(\frakq_t^{-\nu'(t)}\right)= \prod_{t <
t'}(1-\frakq_{t}^{-\nu' (t)} \frakq_{t'}^{-\nu'
(t')})^{\frac{1}{2}(\nu'(t')-\nu'(t))}\cdot \prod_{t}^{\longleftarrow}
\Gamma_{\nu'(t)}\left(\frakq_t^{-\nu'(t)}\right)
\end{equation*}
so that to prove the Lemma, we must prove
\[
\prod_{t < t'}(1-\frakq_{t}^{-\nu' (t)} \frakq_{t'}^{-\nu'
(t')})^{\frac{1}{2}(\nu'(t')-\nu'(t))}= \Mon_{\nu'}^{-1}\cdot \Vsf
_{\emptyset \emptyset \emptyset }\cdot O_{\nu' }.
\]

We begin by simplifying the right hand side.  Let 
\[
\Hook_{\nu'} = \{ (t, t') \in \znums^2 \;|\; t < t', \nu'(t) = -1, \nu'(t') = 1\}.
\]
Observe that $\Hook _{\nu'}$ is a finite set, and indeed is in
bijection with the set of hooks of $\nu'$, as the ordered pairs $(t,
t')$ represent the ends of the legs of a hook.  In turn, each hook of
$\nu '$ corresponds uniquely to some $(j,i)\in \nu '$ given by the
corner of the hook. The product over the hooks then becomes:
\begin{align*}
&\prod _{(t,t')\in \Hook _{v'}}\left(1 - \frakq_{t}^{-\nu ' (t)}\frakq_{t'}^{-\nu ' (t')}\right)^{\frac{1}{2}(\nu'(t')-\nu'(t))}\\
=&\prod _{(t,t')\in \Hook _{v'}}\left(1 - \frakq_{t}^{+1}\frakq_{t'}^{-1}\right) \\
=& \prod _{(t,t')\in \Hook _{v'}}\left(-\frakq_t^{+1} \frakq_{t'}^{-1} \right)\prod _{(t,t')\in \Hook _{v'}}\left(1-\frakq_t^{-1}\frakq_{t'}\right)\\
=&(-1)^{|\nu |} \prod _{(t,t')\in \Hook _{v'}} q_{t+1}^{-1}\dotsb q_{t'}^{-1}\prod _{(t,t')\in \Hook _{v'}} (1-\frakq _{t}^{-1}\frakq _{t'}^{+1})\\
=&\Mon _{\nu '}^{-1} \prod _{(t,t')\in \Hook _{v'}} (1-\frakq
_{t}^{-1}\frakq _{t'}^{+1})
\end{align*}
where the last equality follows from the fact that $(t+1,\dotsc ,t')$
are exactly the set of colors of the boxes in the $(j,i)$-hook of $\nu
'$ corresponding to $(t,t')$.

Using the above, we can then write
\[
\prod _{t<t'}\left(1-\frakq _{t}^{-\nu ' (t)}\frakq _{t'}^{-\nu '
(t')} \right)^{\frac{1}{2}\left(\nu ' (t')-\nu ' (t) \right)} = \cC
(\nu ')\cdot \Mon ^{-1}_{\nu '}
\]
where
\begin{equation}\label{eqn: definition of C(nu')}
\cC(\nu') = \prod_{t<t'}\left(1 - \frakq_t^{-1}\frakq_{t'}^{+1}\right)^{\frac{1}{2}(\nu'(t') - \nu'(t))}.
\end{equation}

We need to prove that $\cC (\nu ') = \Vsf ^{n}_{\emptyset \emptyset
\emptyset }\cdot O_{\nu }$ and we will do so using the induction
strategy described in Remark~\ref{rem: induction strategy for proofs
using n-quotients}.

We first study the base case for this strategy, $\nu ' = \emptyset $.

\[
\cC (\emptyset ) =\prod _{t<0}\prod _{t'\geq 0} \left(1-\frakq
_{t}^{-1} \frakq _{t'}^{+1} \right)^{-1}.
\]
Letting $t=nt_{0}+c$ and $t'=nt_{0}'+d$ where $c,d\in \{0,\dotsc ,n-1
\}$ we see that
\[
\frakq _{t}^{-1} \frakq _{t'}^{+1} = \begin{cases}
q^{t_{0}'-t_{0}}\cdot q_{c+1}\dotsb q_{d} &d>c,\\
q^{t_{0}'-t_{0}} &d=c,\\
q^{t_{0}'-t_{0}}\cdot q_{d+1}^{-1}\dotsb q_{c}^{-1} &d<c.
\end{cases}
\]
Then writing $m=t_{0}'-t_{0}$ we get
\begin{align*}
\cC (\emptyset ) &=\prod _{m=1}^{\infty }\left(1-q^{m} \right)^{-mn} \prod _{0<a\leq b<n} \left(1-q_{a}\dotsb q_{b}\cdot q^{m} \right)^{-m} \left(1-q^{-1}_{a}\dotsb q^{-1}_{b}\cdot q^{m} \right)^{-m}\\
&=\Vsf ^{n}_{\emptyset \emptyset \emptyset }\cdot O_{\emptyset }
\end{align*}
which proves the base case of the induction.

Observe that adding an $n$-hook to $\nu'$ leaves the quantity
\[
-2|\nu'|_k + |\nu'|_{k-1} + |\nu'|_{k+1}
\]
invariant, for each $k$: an $n$-border strip contains one box of each
of the $n$ colors.  As such, $O_{\nu'}$ depends only upon the $n$-core
of $\nu'$.  We will show that $\mathcal{C} ({\nu'})$ also depends only
upon the $n$-core of $\nu'$, which lets us reduce to the case where
$\nu'$ itself is an $n$-core partition.

To do this, let $\rho'$ be a partition obtained by adding an
$n$-border strip to $\nu'$ at position $T$ (see
Figure~\ref{fig:add_n_border_strip}).  In particular, this means that
\begin{align*}
\rho'(T) &= -1, & \nu'(T) &= +1, \\ 
\rho'(T+n) &= +1, & \nu'(T+n) &= -1. \\ 
\end{align*}

\begin{figure}
\caption{The partition $\rho'$ is obtained from $\nu'$ by adding a
length $n$ border strip at time $T =
i-j$.}\label{fig:add_n_border_strip}
\begin{center}
\input{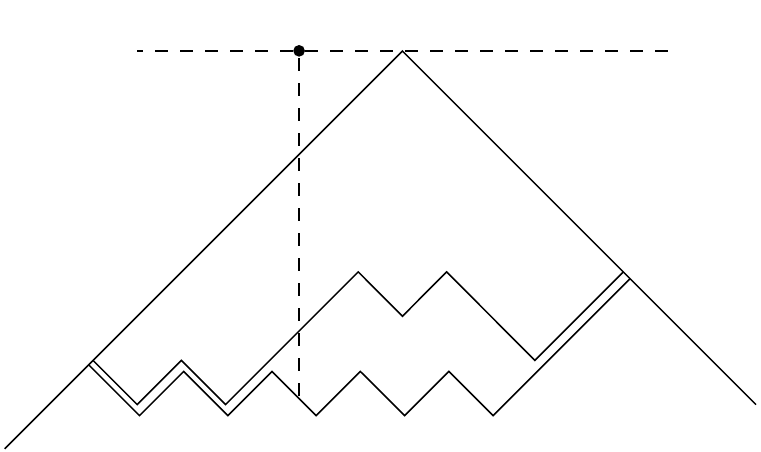_t}
\end{center}
\end{figure}

We will show that $\mathcal{C}(\rho') / \mathcal{C}(\nu')$ = 1.  
First, it is helpful to rewrite $\mathcal{C} (\nu ')$ as follows:

\[
\mathcal{C}(\nu') = \prod_{k \geq 0} \prod_{t \in \znums}
\left(1 - \frakq_{t+k} ^{+1} \frakq_{t}^{-1}\right)^{\frac{1}{2}(\nu(t+k) - \nu(t))}.
\]
Let
\[
K(t,k) = \frac{1}{2}\left[(\rho(t+k) - \nu(t+k)) - (\rho(t) - \nu(t)) \right]
\]
so that
\[
\frac{\mathcal{C'}(\rho)}{\mathcal{C'}(\nu)} = \prod_{k \geq 0}\prod_{t \in \znums}
\left(1 - \frakq_{t+k} ^{+1}\frakq^{-1}_{t}\right)^{K(t,k)}.
\]
Observe that $K(t,k) = 0$ unless $\rho'(t) \neq \nu'(t)$ or $\rho'(t+k)
\neq \nu'(t+k)$.  Moreover, the edge sequences of $\rho'$ and $\nu'$
differ only at $T$ and at $T+n$, since $\rho'$ is the result of adding
an $n$-border strip at position $T$ to $\nu'$.  Therefore,
\begin{align*}
\frac{\mathcal{C}(\rho')}{\mathcal{C}(\nu')} &= \,\,
\prod_{k \geq 0} (1 - \frakq_{T+k} ^{+1} \frakq^{-1}_{T})^{K(T,k)}
\quad \prod_{k \geq 0} (1 - \frakq_{T+n+k} ^{+1} \frakq^{-1}_{T+n})^{K(T+n,k)} \\
&\quad \cdot\prod_{k \geq 0} (1 - \frakq_{T} ^{+1} \frakq^{-1}_{T-k})^{K(T-k,k)}
\prod_{k \geq 0} (1 - \frakq_{T+n} ^{+1} \frakq^{-1}_{T+n-k})^{K(T+n-k,k)} \\
&= \,\,\prod_{k \geq 0}(1 - \frakq_{T+k} ^{+1} \frakq^{-1}_{T})^{K(T,k) + K(T+n,k)}\\
&\quad \cdot 
 \prod_{k \geq 0}(1 - \frakq_{T} ^{+1} \frakq^{-1}_{T-k})^{K(T-k,k) + K(T+n-k,k)}.
\end{align*}

We next examine the quantity $K(T,k)+K(T+n,k)$.  Consider first the
case $k \neq n$.  In this case we have $\rho(T+k) = \nu(T+k)$,
$\rho(T+k-n) = \nu(T+k-n)$, so
\begin{align*}
2(K(T,k) + K(T+n, k)) 
&=  \rho'(T) - \nu'(T) +\rho'(T+n) - \nu'(T+n)\\
&= 0
\end{align*}
because $\rho'(T) = -\rho'(T+n)$, $\nu'(T) = -\nu'(T+n)$.  As such,
all terms other than possibly those where $k=n$ cancel from the
product, so
\begin{align*}
\frac{\mathcal{C} (\rho' )}{\mathcal{C} (\nu' )} 
&= (1 - \frakq_{T+n}^{+1}\frakq^{-1}_{T})^{K(T,n) + K(T+n,n)}
 (1 - \frakq_{T}^{+1}\frakq^{-1}_{T-n})^{K(T-n,n) + K(T,n)} \\
&= (1-q)^{2K(T,n) + K(T+n,n)+ K(T-n,n) }.
\end{align*}
All of the terms in the exponent can now be computed explicitly, since they involve only known quantities:  
\begin{align*}
K(T+n,n) &= \frac{1}{2} (-\rho'(T+n) + \nu'(T+n)) = -1, \\
K(T,n) &=\frac{1}{2}( (\rho'(T+n) - \nu'(T+n)) - (\rho'(T) - \nu'(T))) = 2 ,\\
K(T-n,n) &= \frac{1}{2} (-\rho'(T+n) + \nu'(T+n)) = 1.
\end{align*}
Thus we have $\mathcal{C}(\rho')/\mathcal{C}(\nu') = 1$.  This means
that adding an $n$-border strip to $\nu'$ does not affect
$\mathcal{C}(\nu')$, and as such $\mathcal{C}(\rho')$ depends only
upon the $n$-core of $\nu'$.

Thus to finish the proof of proposition~\ref{prop: retrograde prop}
using the induction argument outlined in Remark~\ref{rem: induction
strategy for proofs using n-quotients}, it remains only to prove the
following lemma.

\begin{lemma}\label{lem: C(rho)/C(nu)=O_rho/O_nu}
Let $\nu '$ be an $n$-core and let $\rho ' = R_{k}R^{-1}_{k+1}\nu '$,
then
\[
\frac{\cC (\rho ')}{\cC (\nu ')} = \frac{O_{\rho '}}{O_{\nu '}}. 
\]
\end{lemma}

We prove the lemma by direct computation. To streamline the notation
we will drop the primes from $\nu '$ and $\rho '$.

We define $T_{k}$ to be the operator which cyclically permutes the
variables by $k$:
\[
(T_{k}F) (q_{0},\dotsc ,q_{n-1}) = F (q_{k},\dotsc ,q_{k+n-1}).
\]
Note that it follows immediately from equation~\eqref{eqn: definition
of C(nu')} that
\begin{equation}\label{eqn: C(R^k nu)=TkC(nu)}
\cC (R^{k}\nu ) = T_{k}\cC (\nu ).
\end{equation}
We begin with a computation:
\begin{align*}
\frac{\Vempty }{T_{1}\Vempty } &= \prod _{0<a\leq b<n} \frac{M (q_{a}\dotsb q_{b},q)M (q^{-1}_{a}\dotsb q^{-1}_{b},q)}{M (q_{a+1}\dotsb q_{b+1},q)M (q^{-1}_{a+1}\dotsb q^{-1}_{b+1},q)}\\
&=\prod _{m=1}^{\infty }\prod _{0<a\leq b<n} \frac{(1-q_{a+1}\dotsb q_{b+1}q^{m})^{m}(1-q^{-1}_{a+1}\dotsb q^{-1}_{b+1}q^{m})^{m}}{(1-q_{a}\dotsb q_{b}q^{m})^{m}(1-q^{-1}_{a}\dotsb q^{-1}_{b}q^{m})^{m}}\\
&=\prod _{m=1}^{\infty } \frac
{\prod _{a=1}^{n-1} (1-q_{a+1}\dotsb q_{n}q^{m})^{m}(1-q^{-1}_{a+1}\dotsb q^{-1}_{n}q^{m})^{m}}
{\prod _{b=1}^{n-1}(1-q_{1}\dotsb q_{b}q^{m})^{m}(1-q^{-1}_{1}\dotsb q^{-1}_{b}q^{m})^{m}}\\
&=\prod _{m=1}^{\infty }\prod _{c=1}^{n-1} \frac
{(1-q_{c+1}\dotsb q_{n}q^{m})^{m} (1-q_{1}\dotsb q_{c}q^{m-1})^{m}}
{(1-q_{1}\dotsb q_{c}q^{m})^{m}(1-q_{c+1}\dotsb q_{n}q^{m-1})^{m}}\\
&=\prod _{m=1}^{\infty }\prod _{c=1}^{n-1} \frac{(1-q_{1}\dotsb q_{c}q^{m-1})}{(1-q_{c+1}\dotsb q_{n}q^{m-1})}.
\end{align*}
In the above, the equality from the second to the third line is
because all the terms cancel except for those in the numerator with
$(a,b)= (a,n-1)$ and those in the denominator with $(a,b)= (1,b)$. The
equality from the fourth to the last line uses the reindexing
$m\mapsto m-1$ on the first terms in the numerator and denominator.

We now wish to compare $\cC (\nu )$ to $\cC (R_{0}\nu )$. Since $\nu = (c_{0},\dotsc ,c_{n-1})$ is an $n$-core, we have that
\[
\nu (cn) = \begin{cases}
+1&c<c_{0},\\
-1&c\geq c_{0}.
\end{cases}
\]
Thus $R_{0}\nu = (c_{0}+1,c_{1},\dotsc ,c_{n-1}) $ differs from $\nu $
(as an edge sequence) only at $t=c_{0}n$ where we have
\begin{align*}
(R_{0}\nu ) (c_{0}n)&=1 &\nu (c_{0}n)&=-1.
\end{align*}
Thus
\begin{align*}
\frac{\cC (\nu )}{\cC (R_{0}\nu )} &= \prod _{t<c_{0}n} \left(1-\frakq ^{-1}_{t}\frakq _{c_{0}n} \right)^{\frac{1}{2} (-1-\nu (t))-\frac{1}{2} (1-\nu (t))}\\
&\,\,\,\, \cdot \prod _{c_{0}n<t} \left(1-\frakq ^{-1}_{c_{0}n}\frakq _{t} \right)^{\frac{1}{2} (\nu (t)+1)-\frac{1}{2} (\nu (t)-1)}\\
&=\prod _{t<c_{0}n}\left(1-\frakq _{t}^{-1}\frakq _{c_{0}n} \right)^{-1} \cdot \prod _{c_{0}n<t}\left(1-\frakq ^{-1}_{c_{0}n}\frakq _{t} \right)^{+1}.
\end{align*}
In the above expression, we can rewrite the product over $t>c_{0}n$ as
a product over $m=1,2,\dotsc $ and $a=1,\dotsc ,n$ by setting $t=
(c_{0}+m-1)n+a$ so that 
\[
\frakq ^{-1}_{c_{0}n}\frakq _{t} = q_{c_{0}n+1}\dotsb q_{(c_{0}+m-1)n+a} = q_{1}\dotsb q_{a}\cdot q^{m-1}.
\]
Similarly, we can rewrite the product over $t<c_{0}n$ as a product
over $m=1,2,\dotsc $ and $a=n-1,n-2,\dotsc ,0$ by $t=n (c_{0}-m)+a$ so
that
\[
\frakq _{t}^{-1}\frakq _{c_{0}n} = q_{n (c_{0}-m)+a+1}\dotsb
q_{c_{0}n} =q_{a+1}\dotsb q_{n}\cdot q^{m-1}.
\]
Thus
\begin{align*}
\frac{\cC (\nu )}{\cC (R_{0}\nu )} &= \prod _{m=1}^{\infty }\frac{\prod _{a=1}^{n} (1-q_{1}\dotsb q_{a}q^{m-1})}{\prod _{a=0}^{n-1} (1-q_{a+1}\dotsb q_{n}q^{m-1})}\\
&=\prod _{m=1}^{\infty }\prod _{a=1}^{n-1} \frac{(1-q_{1}\dotsb q_{a}q^{m-1})}{(1-q_{a+1}\dotsb q_{n}q^{m-1})}\\
&=\frac{\Vempty }{T_{1}\Vempty }
\end{align*}
and so we have shown
\begin{equation}\label{eqn: C(R0nu) = T1V/V*C(nu)}
\cC (R_{0}\nu ) = \frac{T_{1}\Vempty }{\Vempty }\cdot \cC (\nu )
\end{equation}
for any edge sequence $\nu $ with empty $n$-quotient. 

The operator $R_{k}$ can be obtained from $R_{0}$ by conjugating with
$R^{k}$:
\[
R_{k} = R^{k}R_{0}R^{-k}
\]
and thus
\[
R_{k}R^{-1}_{k+1} = R^{k}R_{0}RR^{-1}_{0}R^{-k-1}.
\]
We now compute using equations \eqref{eqn: C(R^k nu)=TkC(nu)} and
\eqref{eqn: C(R0nu) = T1V/V*C(nu)}:
\begin{align*}
\cC (R_{k}R_{k+1}^{-1}\nu ) &= \cC (R^{k}R_{0}R\,R_{0}^{-1}R^{-k-1}\nu )\\
&= T_{k}\cC (R_{0} (R\,R_{0}^{-1}R^{-1-k}\nu ))\\
&= T_{k}\left(\frac{T_{1}\Vempty }{\Vempty }\cdot  \cC (R\,R_{0}^{-1}R^{-k-1}\nu ) \right)\\
&=\frac{T_{k+1}\Vempty }{T_{k}\Vempty }\cdot T_{k+1}\cC (R_{0}^{-1}R^{-k-1}\nu )\\
&=\frac{T_{k+1}\Vempty }{T_{k}\Vempty }\cdot T_{k+1}\left(\frac{\Vempty }{T_{1}\Vempty }\cdot  \cC (R^{-k-1}\nu ) \right)\\
&=\frac{T_{k+1}\Vempty }{T_{k}\Vempty }\cdot \frac{T_{k+1}\Vempty }{T_{k+2}\Vempty }\cdot T_{k+1}T_{-k-1}\cC (\nu )
\end{align*}
and so 
\[
\frac{\cC (R_{k}R_{k+1}^{-1}\nu )}{\cC (\nu )} = \frac{(T_{k+1}\Vempty )^{2}}{T_{k}\Vempty \cdot T_{k+2}\Vempty }.
\]

On the other hand, we have
\[
\frac{O_{R_{k}R^{-1}_{k+1}\nu }}{O_{\nu }} = \prod _{l=0}^{n-1}\left(T_{l}\Vempty  \right)^{\epsilon _{l}}
\]
where
\[
\epsilon _{l} = -2\left(\left| R_{k}R^{-1}_{k+1}\nu \right|_{l}-|\nu
|_{l } \right) + \left(\left| R_{k}R^{-1}_{k+1}\nu \right|_{l-1}-|\nu
|_{l-1 } \right) +\left(\left| R_{k}R^{-1}_{k+1}\nu \right|_{l+1}-|\nu
|_{l+1 } \right) .
\]

The operation $R_{k}R^{-1}_{k+1}$ adds one box of each color except
for $k+1$ to an $n$-core $\nu $ (see
figure~\ref{fig:fourcoreenlarge}). Therefore
\begin{align*}
\epsilon _{l} &= -2 (1-\delta _{k+1,l}) + (1-\delta _{k+1,l-1}) + (1-\delta _{k+1,l+1})\\
&= +2 \delta _{k+1,l} - \delta _{k+1,l-1} - \delta _{k+1,l+1} 
\end{align*}
and so
\[
\frac{O_{R_{k}R^{-1}_{k+1}\nu }}{O_{\nu }} = \frac{(T_{k+1}\Vempty )^{2}}{T_{k}\Vempty \cdot T_{k+2}\Vempty } = \frac{\cC (R_{k}R_{k+1}^{-1}\nu )}{\cC (\nu )}.
\]

This completes the proof of Lemma~\ref{lem: C(rho)/C(nu)=O_rho/O_nu}
and hence of Proposition~\ref{prop: retrograde prop}.
\end{proof}

\begin{prop}\label{prop: retrograde expression equals Hnu*schur function expression}
\[
\left< \mu \left| \prod_{t}^{\longleftarrow}
\Gamma_{\nu'(t)}\left(\frakq_t^{-\nu'(t)}\right) \right| \lambda '
\right> = H_{\nu' }\cdot \Mon_{\nu' }\cdot 
\mathsf{Schur}_{\lambda \mu \nu }
\]
where
\begin{align*}
H_{\nu' } &= \prod _{(j,i)\in \nu' }\frac{1}{1-\prod
_{s=0}^{n-1}q_{s}^{h^{s}_{\nu' } (j,i)}},\\
\Mon_{\nu' }&= (-1)^{|\nu |} \prod _{(j,i)\in \nu '} \prod _{s=0}^{n-1}
q_{s}^{h^{s}_{\nu '} (j,i)},\text{ and}\\
\mathsf{Schur}_{\lambda \mu \nu }&=\sum _{\eta } s_{\mu /\eta }\left(\frakq _{t}|_{\nu ' (t)=-1} \right) s_{\lambda ' /\eta }\left(\frakq^{-1} _{t}|_{\nu ' (t)=+1 }\right).
\end{align*}
\end{prop}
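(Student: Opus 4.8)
The plan is to put the retrograde operator product into \emph{normal order}, moving every $\Gamma_{+}$ factor to the right of every $\Gamma_{-}$ factor, and then evaluate the resulting matrix coefficient with Corollary~\ref{cor:Gamma_bischur_property}. As in the proof of Proposition~\ref{prop: vertex as operator expression} I would first work with the truncated product over $-nN+1\le t\le nN-1$ and let $N\to\infty$ at the end; this is harmless because all the scalar factors produced along the way live over the \emph{finite} set $\Hook_{\nu'}$ and hence stabilise once $N$ is large, while the Schur-function piece converges in the sense of formal Laurent series exactly as in Proposition~\ref{prop: vertex as operator expression}.

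In $\prod_{t}^{\longleftarrow}$ the factor at time $t_{1}$ stands to the left of the factor at time $t_{2}$ precisely when $t_{1}>t_{2}$, whereas in the normal-ordered form every $\Gamma_{-}$ precedes every $\Gamma_{+}$. Therefore the pairs that must be transposed are exactly those consisting of a $\Gamma_{+}$ at some time $r$ (so $\nu'(r)=+1$) and a $\Gamma_{-}$ at some time $s<r$ (so $\nu'(s)=-1$), i.e.\ the elements $(s,r)$ of $\Hook_{\nu'}$; transpositions of two $\Gamma_{+}$'s or two $\Gamma_{-}$'s are trivial by Lemma~\ref{lemma:Gamma_commutation}. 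The argument of the $\Gamma_{+}$ at $r$ is $\frakq_{r}^{-\nu'(r)}=\frakq_{r}^{-1}$ and that of the $\Gamma_{-}$ at $s$ is $\frakq_{s}^{-\nu'(s)}=\frakq_{s}$, so Lemma~\ref{lemma:Gamma_commutation} (with $\sigma=+1$, $\tau=-1$) replaces $\Gamma_{+}(\frakq_{r}^{-1})\Gamma_{-}(\frakq_{s})$ by $\bigl(1-\frakq_{r}^{-1}\frakq_{s}\bigr)^{-1}\Gamma_{-}(\frakq_{s})\Gamma_{+}(\frakq_{r}^{-1})$. Collecting all these transpositions, normal ordering produces the overall scalar
\[
\prod_{(s,r)\in\Hook_{\nu'}}\bigl(1-\frakq_{r}^{-1}\frakq_{s}\bigr)^{-1}
=\prod_{(t,t')\in\Hook_{\nu'}}\bigl(1-\frakq_{t}^{+1}\frakq_{t'}^{-1}\bigr)^{-1}.
\]

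It remains to recognise this scalar as $H_{\nu'}\Mon_{\nu'}$ and to evaluate the normal-ordered matrix coefficient. For the first point I would quote the computation already carried out in the proof of Proposition~\ref{prop: retrograde prop}: there one finds $\prod_{(t,t')\in\Hook_{\nu'}}(1-\frakq_{t}^{+1}\frakq_{t'}^{-1})=\Mon_{\nu'}^{-1}\prod_{(t,t')\in\Hook_{\nu'}}(1-\frakq_{t}^{-1}\frakq_{t'}^{+1})$, and since the colours of the boxes in the $(j,i)$-hook of $\nu'$ attached to $(t,t')$ are $t+1,\dotsc,t'$, one has $\frakq_{t}^{-1}\frakq_{t'}=\prod_{s=0}^{n-1}q_{s}^{h^{s}_{\nu'}(j,i)}$, hence $\prod_{(t,t')\in\Hook_{\nu'}}(1-\frakq_{t}^{-1}\frakq_{t'}^{+1})=\prod_{(j,i)\in\nu'}\bigl(1-\prod_{s}q_{s}^{h^{s}_{\nu'}(j,i)}\bigr)=H_{\nu'}^{-1}$; combining, the scalar equals $\bigl(\Mon_{\nu'}^{-1}H_{\nu'}^{-1}\bigr)^{-1}=\Mon_{\nu'}H_{\nu'}$. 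For the second point, the normal-ordered matrix coefficient is $\left<\mu\left|\,\prod\Gamma_{-}(\frakq_{t})\,\prod\Gamma_{+}(\frakq_{t}^{-1})\,\right|\lambda'\right>$, the first product over the $t$ with $\nu'(t)=-1$ and the second over the $t$ with $\nu'(t)=+1$ (the internal order within each block is irrelevant, since the $\Gamma_{-}$'s commute with one another and likewise the $\Gamma_{+}$'s); Corollary~\ref{cor:Gamma_bischur_property} evaluates it to $\sum_{\eta}s_{\mu/\eta}\bigl(\frakq_{t}|_{\nu'(t)=-1}\bigr)s_{\lambda'/\eta}\bigl(\frakq_{t}^{-1}|_{\nu'(t)=+1}\bigr)=\mathsf{Schur}_{\lambda\mu\nu}$. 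Multiplying the scalar by this and passing to the limit $N\to\infty$ gives the proposition. I expect the only real difficulty to be bookkeeping: keeping straight the exponents and directions in each application of Lemma~\ref{lemma:Gamma_commutation}, and correctly matching the transposed operator pairs first with the elements of $\Hook_{\nu'}$ and then with the hooks of $\nu'$; the analytic content of the $N\to\infty$ limit is identical to that in the proof of Proposition~\ref{prop: vertex as operator expression}.
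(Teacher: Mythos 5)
Your proposal is correct and follows essentially the same route as the paper: normal-order the retrograde product via Lemma~\ref{lemma:Gamma_commutation}, identify the resulting scalar over $\Hook_{\nu'}$ with $H_{\nu'}\cdot\Mon_{\nu'}$ through the hook--monomial identification $\frakq_t^{-1}\frakq_{t'}=\prod_s q_s^{h^s_{\nu'}(j,i)}$, and evaluate the normal-ordered matrix coefficient with Corollary~\ref{cor:Gamma_bischur_property}. Your extra care about the $N\to\infty$ truncation and the explicit matching of transposed pairs with elements of $\Hook_{\nu'}$ is sound but not a departure from the paper's argument.
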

\begin{proof}
We commute the operators so that all the $\Gamma _{+}$s are on the
right and all the $\Gamma _{-}$s are on the left. Using the
commutation relations we obtain
\[
\prod _{t}^{\longleftarrow }\Gamma _{\nu ' (t)}\left(\frakq _{t}^{-\nu ' (t)} \right) = \prod _{\begin{smallmatrix} t'>t\\
\nu ' (t)=-1\\
\nu ' (t')=+1 \end{smallmatrix}} \frac{1}{1-\frakq _{t'}^{-1}\frakq _{t}^{+1}} 
\prod _{\nu ' (t)=-1} \Gamma _{-} (\frakq _{t})\prod _{\nu ' (t)=+1} \Gamma _{+} (\frakq^{-1} _{t}).
\]

Observe that $\nu'(t) = -1, \nu'(t') = 1$ for $t < t'$ if and only if
there is a hook of $\nu'$ with endpoints at $t, t'$. Moreover, we can
rewrite the monomials appearing in the scalar factor above as follows:
\begin{align*}
\frakq _{t'}^{-1}\frakq _{t}^{+1}& = q_{t+1}^{-1}\cdot q_{t+2}^{-1}\dotsb q_{t'}^{-1}\\
&= \prod _{s=0}^{n-1} q_{s}^{-h^{s}_{\nu '} (j,i)}
\end{align*}
where the hook corresponding to $(t',t)$ has corner $(j,i)\in \nu '$
and $h^{s}_{\nu '} (j,i)$ is the number of boxes of color $s$ in the
hook. Clearing the denominators of inverses, we find that the scalar
factor is exactly equal to
\[
H_{\nu '}\cdot \Mon _{\nu '}.
\]
The equality
\[
\left\langle \mu \left| \prod _{\nu ' (t)=-1} \Gamma _{-} (\frakq
_{t})\prod _{\nu ' (t)=+1} \Gamma _{+} (\frakq^{-1}
_{t})\right|\lambda ' \right\rangle = \mathsf{Schur}_{\lambda \mu \nu
}
\]
follows immediately from Corollary~\ref{cor:Gamma_bischur_property}
and the lemma is proved.
\end{proof}

We can now put it all together and complete the proof of
Theorem~\ref{thm: formula for Z_n vertex}. Combining
Propositions~\ref{prop: vertex as operator expression} and \ref{prop:
retrograde prop} we get
\[
\Vsf ^{n}_{\lambda \mu \nu } = \Vsf ^{n}_{\emptyset \emptyset
\emptyset }\cdot  q^{-A_{\lambda }}\cdot \overline{q^{-A_{\mu '}}} O_{\nu }
\cdot q_{0}^{-|\lambda |} \cdot \left< \mu \left| \prod_{t}^{\longleftarrow}
\Gamma_{\nu'(t)}\left(\frakq_t^{-\nu'(t)}\right) \right| \lambda '
\right>.
\]
Applying Proposition~\ref{prop: retrograde expression equals Hnu*schur
function expression} and using homogeneity of Schur functions, we get
\begin{multline}\label{eqn: almost final eqn for V in the proof}
\Vsf ^{n}_{\lambda \mu \nu } = \Vsf ^{n}_{\emptyset \emptyset
\emptyset } \cdot q^{-A_{\lambda }}\cdot \overline{q^{-A_{\mu '}}}\cdot  O_{\nu } \cdot H_{\nu
}\cdot \\
\sum _{\eta } q_{0}^{-|\eta |}s_{\mu /\eta }\left(
\frakq _{t}|_{\nu ' (t)=-1} \right) s_{\lambda ' /\eta
}\left(q_{0}^{-1}\cdot\frakq^{-1} _{t}|_{\nu ' (t)=+1 }\right).
\end{multline}
Finally, using
\begin{align*}
S (\nu ')& = -S (\nu ')^{c}-1, & q_{0}^{-1}\cdot \frakq _{t}^{-1} &=
\overline{\frakq _{-1-t}},
\end{align*}
we observe the following equalities of sets:
\begin{align*}
\{\frakq _{t}\,:\, \nu ' (t)=-1 \} &= \{\frakq _{t}\,:\, t\in S (\nu ')^{c} \}\\
& = \frakq _{\bullet -\nu '}\\
\{ q_{0}^{-1}\cdot\frakq^{-1} _{t}\,:\,{\nu ' (t)=+1 }\}&=\{\overline{\frakq _{-1-t}} \,:\, t\in S (\nu' ) \}\\
&=\{\overline{\frakq _{-1-t}} \,:\, t\in -S (\nu' )^{c}-1 \}\\
&=\{\overline{\frakq _{T}}\,:\, T\in S (\nu ')^{c} \}\\
&=\overline{\frakq _{\bullet -\nu '}}
\end{align*}
which, when substituted into equation~\eqref{eqn: almost final eqn for
V in the proof}, completes the proof of Theorem~\ref{thm: formula for
Z_n vertex}.\qed

\appendix 

\bigskip\bigskip\bigskip 

\section{Grothendieck-Riemann-Roch for orbifolds and the Toen
operator.}\label{app: GRR and Toen operator.}

We briefly review Grothendieck-Riemann-Rock for Deligne-Mumford stacks
and we work out some examples needed in the paper. The basic reference
is \cite{Toen}; see also \cite[Appendix~A]{Tseng-GandT}.

Let $\X $ be a smooth Deligne-Mumford stack. Let $I\X $ be the inertia
stack of $\X $. The objects of $I\X $ are pairs $(x,g)$ where $x$ is
an object of $\X $ and $g$ is an automorphism of $x$. There is a local
immersion
\[
\pi :I\X \to \X 
\]
which forgets $g$.

Let $E$ be a vector bundle on $I\X $. There is a canonical
automorphism\footnote{induced by the canonical 2-morphism $\pi
\Rightarrow \pi $ given by $(x,g)\mapsto g$.} of $E$ and consequently
there is a decomposition
\[
E=\oplus _{\omega } E^{\omega }
\]
where the sum is over roots of unity $\omega \in \cnums $ and the
canonical automorphism acts by multiplication by $\omega $ on
$E^{\omega }$.

We define an endomorphism $\rho $ of $K (I\X )\otimes \cnums $ by 
\[
\rho (E) = \sum _{\omega } \omega [E^{\omega }].
\]
Let $N$ be the normal bundle to the local immersion $\pi :I\X \to \X $
and let
\[
\lambda _{-1} (N^{\vee }) = \sum _{i} (-1)^{i} \Lambda ^{i}N^{\vee }
\in K (I\X ).
\]
We define the \emph{Toen operator}
\[
\tau_{\X } :K (\X ) \to A (I\X )
\]
by
\[
\tau _{\X } (E) = \frac{ch (\rho (\pi ^{*}E))}{ch (\rho (\lambda
_{-1}N^{\vee }))} \cdot td (T_{I\X })
\]
where $td (T_{I\X }) $ is the Todd class of $I\X $. 

Toen's Grothendieck-Riemman-Roch theorem for stacks asserts
\cite[4.10,~4.11]{Toen} that $\tau $ is functorial with respect to
proper pushforwards.
\begin{theorem}[Toen]\label{thm: Toen's GRR}
Let $f:\X \to \mathcal{Y}$ be a proper morphism of smooth
Deligne-Mumford stacks, then for all $E\in K (\X )$,
\[
f_{*} (\tau _{\X } (E)) = \tau _{\mathcal{Y}} (f_{*}E).
\]
In particular, for $f:\X \to \pt $, we get
\[
\chi (E) =\int _{I\X } \tau _{\X } (E).
\]
\end{theorem}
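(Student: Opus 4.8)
The plan is to treat the first, general statement as a citation rather than to reprove it: it is exactly Toen's Grothendieck--Riemann--Roch theorem for Deligne--Mumford stacks, \cite[4.10,~4.11]{Toen}. Were one to reconstruct the argument, the standard route would be to factor the proper morphism $f\colon \X \to \mathcal{Y}$ as a closed immersion into a smooth projective-bundle-type stack over $\mathcal{Y}$ followed by the projection, to reduce to the two basic cases (a regular closed immersion and a smooth projective morphism), and in each case to track how the inertia stack $I\X$, the twisting operator $\rho$, the correction factor $\lambda_{-1}(N^\vee)$, and the Todd class $td(T_{I\X})$ behave under pushforward; the whole content of Toen's theorem is that these ingredients are precisely what make the naive GRR formula remain functorial on stacks. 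I would not carry this out.

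Granting the general statement, I would deduce the ``in particular'' assertion by specializing to $\mathcal{Y} = \pt$. First, I would identify $\tau_\pt$: here $I\pt = \pt$, the local immersion $\pi\colon I\pt \to \pt$ is the identity with zero normal bundle, so $\lambda_{-1}(N^\vee) = \O$ and $\rho$ acts as the identity (the canonical automorphism of a bundle on $\pt$ is trivial), and $td(T_\pt) = 1$; hence $\tau_\pt\colon K(\pt) \to A(\pt)$ is just the rank isomorphism $\znums \cong \znums$. Next, on the $K$-theory side, $f_* E \in K(\pt)$ is the pushforward to a point, i.e.\ $\sum_i (-1)^i \dim H^i(\X, E) = \chi(E)$, so $\tau_\pt(f_* E) = \chi(E)$. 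On the Chow side, the proper pushforward $f_*\colon A(I\X) \to A(\pt) = \znums$ induced by $I\X \to \pt$ is the degree map $\int_{I\X}(-)$. Substituting both into $f_*(\tau_\X(E)) = \tau_\pt(f_* E)$ then gives $\int_{I\X} \tau_\X(E) = \chi(E)$, as desired.

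The genuinely subtle point, and the reason I would cite rather than reprove, is the normalization of $\tau$: the denominator $ch(\rho(\lambda_{-1}N^\vee))$ together with the passage to the inertia stack is exactly what makes $\tau$ reduce to classical GRR on the untwisted component and what is compatible with proper pushforward, and verifying that compatibility is the substance of \cite[4.10,~4.11]{Toen}. Once this theorem is in hand, the explicit computations of $\tau_\X$ needed later in the paper (for $B\znums_n$, for a $B\znums_n$-gerbe over a curve, for the football, and the resulting holomorphic Euler characteristics) reduce to unwinding the definition of $\tau_\X$ and applying this functoriality, so no further conceptual input is required.
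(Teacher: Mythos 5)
Your proposal matches the paper's treatment: the paper states this as a cited result of Toen \cite[4.10,~4.11]{Toen} and offers no proof, exactly as you propose, and your specialization to $\mathcal{Y}=\pt$ (identifying $\tau_{\pt}$ with the rank map and the pushforward with $\int_{I\X}$) is the standard, correct way to obtain the ``in particular'' clause. Nothing further is needed.
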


\begin{exmpl}\label{exmpl: toen operator for BZn}
Let $\X =B\znums _{n}$, then $I\X =\cup _{l=0}^{n-1}\X ^{l}$ where $\X
^{l}\cong B\znums _{n}$. If $L_{k}\to \X $ is the line bundle
determined by the 1-dimensional representation of $\znums _{n}$ having
character $\omega ^{k}$ where $\omega =\exp\left(\frac{2\pi i}{n}
\right)$, then the canonical automorphism acts by multiplication with
$\omega ^{kl} $ on $L_{k}$ restricted to $\X ^{l}$. Thus
\[
\tau _{\X } (L_{k})|_{\X ^{l}} = \omega ^{kl}.
\]
\end{exmpl}

\begin{exmpl}\label{exmpl: toen operator for BZn gerbe over a curve}
Let $\mathcal{C}\to \P ^{1}$ be a $B\znums _{n}$ gerbe and let
$L_{k,m}\to \mathcal{C}$ be a line bundle with
\[
\deg (L_{k,m}) = m \in \frac{1}{n}\znums 
\]
and such that the restriction of $L_{k,m} $ to a point $B\znums _{n}\in \mathcal{C}$ is the bundle $L_{k}$ from example~\ref{exmpl: toen operator for BZn}. Then $I\mathcal{C}=\cup _{l=0}^{n-1}\mathcal{C}^{l}$ and 
\[
\tau (L_{k,m})|_{\mathcal{C}^{l}} = \omega ^{kl} (1+m[\pt ]).
\]
\end{exmpl}
\begin{exmpl}\label{exmpl: GRR for the football}
Let $\P ^{1}_{a,b}$ be the football, i.e. the stack given by root
constructions \cite{Cadman} of orders $a$ and $b$ at the points
$[0]\in \P ^{1}$ and $[\infty ] \in \P ^{1}$ respectively. Let $[\pt
]\in \P ^{1}_{a,b}$ be a non-stacky point. The following lemma gives a
formula for the Euler characteristic of a line bundle on the football.
\begin{lemma}
\[
\chi \left(\O _{\P ^{1}_{a,b}} (d[\pt ]+s[0]+t[\infty ]) \right)=d+1+\left\lfloor \frac{s}{a_{\,}}  \right\rfloor+\left\lfloor \frac{t}{b}  \right\rfloor.
\]
\end{lemma}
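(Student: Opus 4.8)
The plan is to reduce the computation to ordinary Riemann--Roch on $\P^1$ by pushing forward along the coarse moduli map $\pi\colon\P^1_{a,b}\to\P^1$. Since everything is in characteristic zero, $\P^1_{a,b}$ is a tame Deligne--Mumford stack, so $\pi_{*}$ is exact on quasi-coherent sheaves; in particular $R^{i}\pi_{*}L=0$ for $i>0$ and every line bundle $L$, whence the Leray spectral sequence gives $\chi(\P^1_{a,b},L)=\chi(\P^1,\pi_{*}L)$.

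The next step is to identify $\pi_{*}\O_{\P^1_{a,b}}(d[\pt]+s[0]+t[\infty])$. Write $s=a\lfloor s/a\rfloor+r$ and $t=b\lfloor t/b\rfloor+r'$ with $0\le r<a$ and $0\le r'<b$. The defining property of the root construction \cite{Cadman} yields isomorphisms of line bundles $\O_{\P^1_{a,b}}(a[0])\cong\pi^{*}\O_{\P^1}([0])$, $\O_{\P^1_{a,b}}(b[\infty])\cong\pi^{*}\O_{\P^1}([\infty])$, and $\O_{\P^1_{a,b}}([\pt])\cong\pi^{*}\O_{\P^1}([\pt])$ (the last because $\pi$ is an isomorphism near $\pt$), so that
\[
\O_{\P^1_{a,b}}(d[\pt]+s[0]+t[\infty])\cong\pi^{*}\O_{\P^1}\!\big(d[\pt]+\lfloor s/a\rfloor[0]+\lfloor t/b\rfloor[\infty]\big)\otimes\O_{\P^1_{a,b}}(r[0]+r'[\infty]).
\]
By the projection formula it therefore suffices to prove $\pi_{*}\O_{\P^1_{a,b}}(r[0]+r'[\infty])=\O_{\P^1}$ whenever $0\le r<a$ and $0\le r'<b$. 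This is local: away from $0$ and $\infty$ the map $\pi$ is an isomorphism and the sheaf is trivial, so one reduces to the local model $[\bbA^{1}_{w}/\znums_{a}]\to\bbA^{1}_{v}$, $v=w^{a}$, where $\O(r[0])$ is the $\znums_{a}$-equivariant module $\bigoplus_{m\ge0}\cnums\cdot w^{m-r}$ with $\znums_{a}$ acting by $w\mapsto\zeta w$; passing to invariants gives $\bigoplus_{j\ge0}\cnums\cdot w^{aj}=\cnums[v]$, which is $\O$ of the coarse chart. The same computation applies at $\infty$.

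Putting the two steps together and using Riemann--Roch on $\P^1$,
\[
\chi\big(\O_{\P^1_{a,b}}(d[\pt]+s[0]+t[\infty])\big)=\chi\big(\O_{\P^1}(d[\pt]+\lfloor s/a\rfloor[0]+\lfloor t/b\rfloor[\infty])\big)=d+\lfloor s/a\rfloor+\lfloor t/b\rfloor+1,
\]
as claimed. The only non-formal inputs are the vanishing $R^{>0}\pi_{*}L=0$ for tame stacks and the local identification $\pi_{*}\O(r[0])=\O_{\P^1}$ for $0\le r<a$; both are standard, and I expect the bookkeeping of floors versus remainders to be the only place where care is needed. Alternatively, the lemma follows from a direct application of Toen's Grothendieck--Riemann--Roch (Theorem~\ref{thm: Toen's GRR}) by summing the contribution of the untwisted sector against those of the $a-1$ and $b-1$ twisted sectors over $I\P^1_{a,b}$, using Examples~\ref{exmpl: toen operator for BZn} and \ref{exmpl: toen operator for BZn gerbe over a curve}; I would present whichever turns out shorter in context.
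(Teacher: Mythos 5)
Your proof is correct, but it takes a genuinely different route from the paper. The paper proves this lemma as an application of Toen's Grothendieck--Riemann--Roch: it integrates the Toen class over the inertia stack $I\P^1_{a,b} = \P^1_{a,b}\cup\bigcup_k P_k\cup\bigcup_l Q_l$, obtaining $d+\tfrac{s}{a}+\tfrac{t}{b}+\tfrac{1}{2a}+\tfrac{1}{2b}$ plus two twisted-sector sums, and then converts these into floor functions via the roots-of-unity identity $\frac{1}{a}\sum_{k=1}^{a-1}\frac{\omega_a^{ks}}{1-\omega_a^{-k}}=\lfloor s/a\rfloor-\frac{s}{a}+\frac{a-1}{2a}$ (which the authors relegate to a footnote). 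You instead push forward to the coarse space: exactness of $\pi_*$ for tame stacks gives $\chi(\P^1_{a,b},L)=\chi(\P^1,\pi_*L)$, the defining property of the root construction peels off $\pi^*\O_{\P^1}(d+\lfloor s/a\rfloor+\lfloor t/b\rfloor)$ via the projection formula, and the local invariants computation $\bigl(w^{-r}\cnums[w]\bigr)^{\znums_a}=\cnums[v]$ for $0\le r<a$ shows the residual twist pushes forward to $\O_{\P^1}$. Your argument is more elementary and makes the origin of the floor functions transparent (they arise from taking invariants rather than from a trigonometric identity); the paper's route, on the other hand, doubles as a worked demonstration of the Toen operator on the football, whose sector-by-sector values are themselves used in the surrounding $K$-theory computations, which is presumably why the authors chose it. Your closing remark correctly identifies the GRR route as the alternative; it is in fact the one the paper takes.
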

The inertia stack breaks into components as follows:
\[
I\P ^{1}_{a,b} = \P ^{1}_{a,b} \,\,\bigcup _{k=0}^{a-1} P_{k} \,\,\bigcup _{l=0}^{b-1} Q_{l}
\]
where $P_{k}\cong B\znums _{a}$ and $Q_{l}\cong B\znums _{b}$. Let
$\omega _{a}=\exp\left(\frac{2\pi i}{a} \right)$ and $\omega
_{b}=\exp\left(\frac{2\pi i}{b} \right)$, then $N_{P_{k}/\P
^{1}_{a,b}}$ is the line bundle on $B\znums _{a}$ with character
$\omega _{a}^{k}$ and $N_{Q_{l}/\P ^{1}_{a,b}}$ is the line bundle on
$B\znums _{b}$ with character $\omega _{b}^{l}$. Therefore
\[
\tau \left(\O_{\P ^{1}_{a,b}} (d[\pt ]+s[0]+t[\infty ])
\right)\restricts{\P ^{1}_{a,b}} = \left(1+d[\pt ]+s[0]+t[\infty ]
\right)\cdot \left(1+\frac{1}{2} ([0]+[\infty ]) \right),
\]
\[
\tau \left(\O_{\P ^{1}_{a,b}} (d[\pt ]+s[0]+t[\infty ])
\right)\restricts{P_{k}} =\frac{ch\left(\rho (\O
(s[0])\restricts{P_{k}}) \right)}{ch\left(\rho \left(1-N^{\vee
}_{P_{k}/I\P ^{1}_{a,b}} \right) \right)} = \frac{\omega
_{a}^{ks}}{1-\omega _{a}^{-k}},
\]
and similarly
\[
\tau \left. \left(\O_{\P^1_{a,b}}  (d[\pt ]+s[0]+t[\infty ]) \right) \right|_{Q_{l}} = \frac{\omega _{b}^{lt}}{1-\omega _{b}^{-l}}.
\]
Now integrating $\tau \left(\O (d[\pt ]+s[0]+t[\infty ]) \right) $ over
$I\P ^{1}_{a,b}$, we get
\[
\chi \left(\O_{\P ^{1}_{a,b}} (d[\pt ]+s[0]+t[\infty ]) \right) = d+\frac{s}{a}+\frac{t}{b} + \frac{1}{2a} +\frac{1}{2b} + \frac{1}{a}\sum _{k=1}^{a-1} \frac{\omega _{a}^{ks}}{1-\omega _{a}^{-k}} +\frac{1}{b}\sum _{l=1}^{b-1} \frac{\omega _{b}^{lt}}{1-\omega _{b}^{-l}}.
\]
The lemma then follows from the identity\footnote{You can have some
fun and try to prove this elementary identity for yourself. If you get
stuck, a complete proof can be found at:\\
\texttt{www.math.ubc.ca/$\sim$jbryan/papers/identity.pdf}.}
\[
\frac{1}{a} \sum _{k=1}^{a-1} \frac{\omega _{a}^{ks}}{1-\omega _{a}^{-k}} =\left\lfloor \frac{s}{a_{\,}}  \right\rfloor -\frac{s}{a} +\frac{a-1}{2a}
\]
and its counterpart for the sum over $l$.
\end{exmpl}

\section{Orbifold toric \CYthrees and web
diagrams}\label{app: web diagrams}

A orbifold toric \CYthree is a smooth toric Deligne-Mumford stack
$\X $ with generically trivial stabilizers and having trivial
canonical bundle.

\begin{lemma}\label{lem:toric_orbiCY_determined_by_cspace}
A orbifold toric \CYthree $\X $ is uniquely determined by its
coarse moduli space $X$.
\end{lemma}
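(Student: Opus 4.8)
The plan is to reduce the statement to the classification of smooth toric Deligne-Mumford stacks with generically trivial stabilizers, and then to show that the Calabi-Yau condition rigidifies the only remaining freedom, namely the choice of root-construction orders (equivalently, the stacky structure along the torus-invariant divisors).

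First I would recall the structure theory of smooth toric DM stacks. By the results of Fantechi-Mann-Nironi and Borisov-Chen-Smith, a smooth toric DM stack with generically trivial stabilizers is determined by a \emph{stacky fan}, and the underlying coarse space $X$ is the toric variety of the underlying simplicial fan $\Sigma$. When $\X$ has generically trivial stabilizers, the only additional data beyond $\Sigma$ is a positive integer attached to each ray of $\Sigma$ (the multiplicity by which the corresponding divisor is "thickened" via a root construction); over a smooth toric variety, a smooth toric DM stack of this type is exactly an iterated root stack of $X$ along its toric boundary divisors. So the content of the lemma is: the Calabi-Yau condition $K_{\X}\cong\O_{\X}$ forces every one of these root-construction integers to equal $1$, i.e. $\X = X$.

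The key computation is the comparison of canonical bundles under a root construction. If $\X$ is obtained from a (smooth) toric stack $\X_0$ by taking the $r$-th root along a divisor $D$, then $K_{\X} = \pi^* K_{\X_0} \otimes \O((r-1)\tilde D)$ where $\pi:\X\to\X_0$ is the root map and $\tilde D$ is the reduced preimage of $D$ (an $r$-torsion-up-to-pullback divisor on $\X$). Iterating over all toric boundary divisors $D_\rho$ with root orders $r_\rho$, one gets $K_{\X} = \pi^*\left(\O_X(\sum_\rho a_\rho D_\rho)\right)\otimes\O\left(\sum_\rho (r_\rho-1)\tilde D_\rho\right)$ for suitable integers $a_\rho$; but more cleanly, one can just test triviality of $K_{\X}$ on a torus-fixed chart $[\cnums^3/G_p]$. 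On such a chart $G_p$ acts linearly with some weights, and $K_{[\cnums^3/G_p]}\cong\O$ as a $G_p$-equivariant bundle precisely when $G_p\subset SL(3,\cnums)$, which is the Calabi-Yau condition already recorded in the excerpt. The point to extract is that the stacky structure transverse to each curve, and the root orders at the fixed points, are then pinned down by requiring the local weights to sum to zero: in the transverse $A_{n-1}$ situation this is exactly the relation $m+m'=-2$ (resp. $\tilde m + \tilde m' = \delta_0+\delta_0'+\delta_\infty+\delta_\infty'-2$) derived in Section~\ref{subsec: conventions for An-1 toric orbifolds}, and it leaves no free integer parameter once $\Sigma$ (hence $X$) is fixed.

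So the steps, in order, are: (1) invoke the stacky-fan classification to say $\X$ is an iterated root stack of $X$ along its toric boundary, parametrized by integers $r_\rho\geq 1$; (2) compute $K_{\X}$ in terms of the $r_\rho$ and of $\Sigma$, or equivalently work locally on each affine toric chart $[\cnums^3/G_p]$; (3) show $K_{\X}\cong\O_{\X}$ forces the $G_p$ to lie in $SL_3$ and deduce that this rigidifies all the $r_\rho$ in terms of $\Sigma$ alone — concretely, the Calabi-Yau/smoothness constraints on the weights leave the same combinatorial data as a plain fan, so no choice remains. The main obstacle I anticipate is step (3): making precise that "$K_{\X}$ trivial" together with smoothness genuinely eliminates \emph{all} the root-order freedom, rather than merely constraining it. One has to be careful that a root construction along a divisor changes the canonical class in a way that cannot be compensated by a change of line bundle on the coarse space while keeping $\X$ smooth — i.e. one needs the torsion part of $\operatorname{Pic}(\X)$ coming from the root stack to be detected by $K_{\X}$. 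I would handle this by the local analysis on the fixed charts together with the fact (used implicitly throughout the paper) that a Calabi-Yau toric DM stack has all its stacky structure supported on the torus-invariant locus, so that checking triviality of $K_\X$ chart-by-chart suffices and the global reconstruction of $\X$ from $X$ follows from gluing these local models, which are themselves determined by $X$.
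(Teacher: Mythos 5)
Your overall strategy (stacky-fan classification plus the observation that the Calabi--Yau condition kills the root-construction freedom) is close in spirit to the paper's argument, but your stated conclusion is wrong in a way that matters: the lemma does \emph{not} say $\X = X$. The coarse space $X$ of an orbifold toric \CYthree is typically singular (e.g.\ it has transverse $A_{n-1}$ singularities along curves), and $\X$ is a genuinely nontrivial smooth DM stack over it; if $\X=X$ always held, the whole paper would be about schemes. What the lemma asserts is uniqueness: $\X$ coincides with the \emph{canonical} (minimal) orbifold $\X^{can}$ over $X$, which Fantechi--Mann--Nironi show is canonically associated to $X$. Your error enters when you describe $\X$ as an iterated root stack of $X$ along its toric boundary divisors --- that description presupposes $X$ smooth. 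In general one has the FMN factorization $\X\to\X^{rig}\to\X^{can}\to X$, where the first arrow is an Abelian gerbe (killed by the generically-trivial-stabilizers hypothesis, not by the CY condition) and the second is a fibered product of root constructions along toric divisors of $\X^{can}$. The CY condition eliminates only this second layer, and what remains is $\X^{can}$, not $X$.

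The mechanism is also cleaner than your step (3) suggests, and the relations $m+m'=-2$, $\tilde m+\tilde m'=\delta_0+\delta_0'+\delta_\infty+\delta_\infty'-2$ are irrelevant here (they concern normal bundles of invariant curves, not the reconstruction of $\X$). A root construction of order $r>1$ along a divisor creates a generic stabilizer $\znums_r$ along that divisor which acts trivially on the two tangent directions and by a primitive character on the normal direction, hence with nontrivial determinant, so it cannot lie in $SL(3,\cnums)$. Equivalently, $K_{\X}\cong\O_{\X}$ forces the stacky locus to have codimension at least two, which is precisely the statement that no divisorial root constructions occur, i.e.\ $\X=\X^{can}$. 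If you repair your write-up by (a) replacing ``$\X=X$'' with ``$\X=\X^{can}$'' and invoking the FMN uniqueness of $\X^{can}$, and (b) replacing your vague step (3) with the codimension argument just given, you recover the paper's proof.
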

\textsc{Proof:} This follows from the classification result of
Fantechi, Mann, and Nironi \cite{Fantechi-Mann-Nironi}. They show that
if $\X $ is a smooth Deligne-Mumford toric stack, then the structure
morphism to the coarse space factors canonically via toric morphisms
\[
\X \to \X ^{rig}\to \X ^{can}\to X
\]
where $\X \to \X ^{rig} $ is an Abelian gerbe over $\X ^{rig}$, $\X
^{rig}\to \X ^{can}$ is a fibered product of roots of toric divisors,
and $\X ^{can }\to X$ is the minimal orbifold having $X$ as
its coarse moduli space. They prove that $\X ^{can}$ is unique and
canonically associated to $X$. Since we assume $\X $ is an orbifold,
we have $\X =\X ^{rig}$. Since we assume $K_{\X } $ is trivial, the
stacky locus in $\X $ has codimension at least two and hence $\X =\X
^{can}$. \QED

The combinatorial data determining a toric variety is well understood
and is most commonly expressed as the data of a fan (by the above
lemma, we do not require the stacky fans of Borisov, Chen and Smith
\cite{Borisov-Chen-Smith}). In the case of an orbifold toric \CYthree,
it is convenient to use equivalent (essentially dual) combinatorial
data, namely that of a web diagram.

\begin{defn}\label{defn: web-diagram} A \emph{web diagram} consists of
the data
\begin{itemize}
\item A graph $\Gamma$ which is trivalent and
embedded in the plane. The graph is finite and necessarily has some
non-compact edges.
\item A marking $\{x_{v,e}\}$, which consists of a non-zero vector
$x_{v,e}\in \znums ^{2}$ for each pair $(v,e)$ where $e$ is an edge
incident to a vertex $v$.
\end{itemize}
The data satisfies the following.
\begin{itemize}
\item For each compact edge $e$ with bounding vertices $v$ and $v'$,
\[
x_{v,e}+x_{v'e}=0.
\]
\item For each vertex $v$ with incident edges $(e_{1},e_{2},e_{3})$,
\[
x_{v,e_{1}}+x_{v,e_{2}}+x_{v,e_{3}} =0.
\]
\end{itemize}
Two markings $\{x_{v,e} \}$ and $\{x'_{v,e} \}$ are equivalent if
there exists $g\in SL_{2} (\znums )$ such that $gx_{e,v}=x'_{v,e}$ for
all $(v,e)$.
\end{defn}

\begin{lemma}\label{lem: a toric orbiCY3 corresponds to a web diagram}
Every orbifold toric \CYthree $\X $ determines a
web diagram $\Gamma _{\X }$, unique up to equivalence.
\end{lemma}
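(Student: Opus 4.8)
The plan is to read the web diagram directly off the fan of $\X$. By Lemma~\ref{lem:toric_orbiCY_determined_by_cspace}, $\X$ coincides with $\X^{can}$, so it is determined by the fan $\Sigma\subset N_\mathbb{R}$ of its coarse space $X$, where $N\cong\znums^3$; this $\Sigma$ is simplicial and, in the geometries under consideration, has all maximal cones three-dimensional, with every cone a face of a maximal one. The first step is to invoke the Calabi--Yau condition $K_\X\cong\O_\X$, which (by the usual toric computation $K=-\sum_\rho D_\rho$, together with the primitivity built into the earlier lemma on $\omega_\X$) is equivalent to the existence of a primitive $m\in M=\Hom(N,\znums)$ with $\langle m,v_\rho\rangle=1$ for every ray $\rho\in\Sigma(1)$, where $v_\rho$ denotes the primitive generator. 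Put $N'=\ker(m)\cong\znums^2$ and let $H=\{v\in N_\mathbb{R}:\langle m,v\rangle=1\}$, an affine plane which is a torsor under $N'_\mathbb{R}$ and which contains every $v_\rho$. Slicing the cones of $\Sigma$ by $H$ yields a locally finite simplicial subdivision $\mathcal P$ of the region $|\Sigma|\cap H$: three-cones give triangles, two-cones give segments, rays give points. Note $|\Sigma|\neq N_\mathbb{R}$, since otherwise $\Sigma$ would contain a ray $\rho$ with $\langle m,v_\rho\rangle\le 0$; so $\mathcal P$ has a nonempty boundary.

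Next I would define $\Gamma=\Gamma_\X$ to be the planar dual of $\mathcal P$, drawn inside $H$: one vertex in the interior of each triangle (these index the maximal cones, hence the torus fixed points); a compact edge joining the two vertices across each internal segment of $\mathcal P$, and a non-compact edge running out through each boundary segment (these index the two-cones, hence the torus invariant curves); and complementary regions in bijection with the vertices of $\mathcal P$, hence with the rays, hence with the torus invariant divisors. Up to planar isotopy this graph is canonical; it is trivalent because each triangle has three sides, finite because $\Sigma$ has finitely many cones, and has non-compact edges because $\mathcal P$ has boundary segments. To define the marking, fix a lattice isomorphism $N'\cong\znums^2$ and an orientation of $H$; for a vertex $v$, with triangle $T_v$, and an incident edge $e$, dual to a side $\ell$ of $T_v$, let $x_{v,e}\in N'$ be the displacement (edge) vector of $\ell$ read off as one traverses $\partial T_v$ counterclockwise.

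It then remains to check the two conditions of Definition~\ref{defn: web-diagram} and the uniqueness. If $e$ is compact with endpoints $v,v'$, its side $\ell$ is common to $T_v$ and $T_{v'}$ and is traversed in opposite senses by the counterclockwise boundaries of the two triangles, so $x_{v,e}+x_{v',e}=0$. If $v$ has incident edges $e_1,e_2,e_3$, then $x_{v,e_1},x_{v,e_2},x_{v,e_3}$ are the three successive displacement vectors of the closed polygon $\partial T_v$, so they sum to $0$; equivalently this is the linear relation among the ray generators of the cone corresponding to $v$, read in $N'$. For uniqueness: the only freedom in the construction is the choice of $m$, which is forced ($-m$, and any non-primitive rescaling, violates the normalization, and two primitive choices agreeing on all $v_\rho$ must coincide since the $v_\rho$ span $N_\mathbb{R}$), and the choice of the identification $N'\cong\znums^2$ with orientation, and any two such differ by an element of $SL_2(\znums)$ (resp.\ $GL_2(\znums)$ if no orientation is fixed), acting on the marking precisely as in Definition~\ref{defn: web-diagram}. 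Hence $\Gamma_\X$ is well defined up to equivalence, completing the proof.

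I expect the genuine content to be minor and the difficulty to be bookkeeping: phrasing the planar-dual construction and the region/divisor correspondence cleanly for a fan with nontrivial boundary, and verifying that the marking depends on no auxiliary data beyond what the equivalence relation already quotients by. One should in particular note that the normalization --- displacement (edge) vectors, rather than primitive normals --- is what makes the balancing conditions hold verbatim, since orbifold triangles need not be unimodular and so their sides need not be primitive in $N'$ (the two conventions differ by a fixed rotation and both are anyway allowed by the definition); for a \emph{smooth} fan the sides are primitive and one recovers the classical $(p,q)$-web.
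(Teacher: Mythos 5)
Your proposal is correct and follows essentially the same route as the paper: reduce to the coarse space via Lemma~\ref{lem:toric_orbiCY_determined_by_cspace}, use the Calabi--Yau condition to produce the linear function cutting out the plane $\{l=1\}$, intersect the fan with it to get the triangulation, dualize, and mark each (vertex, edge) pair by the oriented displacement vector of the corresponding triangle side. You simply spell out the verification of the balancing conditions and the uniqueness of $l$, which the paper leaves as "by construction."
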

\textsc{Proof:} By lemma~\ref{lem:toric_orbiCY_determined_by_cspace}, $\X $ is determined by its coarse space $X$, a toric
variety with Gorenstein finite quotient singularities and trivial
canonical bundle. Such an $X$ determines a simplicial fan $\Sigma
\subset N\otimes \qnums $ with $N\cong \znums ^{3}$. Since the
canonical divisor is trivial, there exists a linear function $l:N\to
\znums $ such that $l (v_{i})=1$ for all the generators $v_{i}$ of the
one dimensional cones of $\Sigma $. Thus $\Sigma $ intersects the
plane $\{l=1 \}$ in a triangulation $\hat{\Gamma }$ having integral
vertices. Let $\Gamma _{\X }=\Gamma $ be the graph dual to
$\hat{\Gamma }$ in the plane $\{l=1 \}$. We define a marking of
$\Gamma $ as follows. Under duality, a vertex in $\Gamma $ with
incident edge $e$ corresponds to a triangle $\hat{v}$ in $\hat{\Gamma
}$ and a bounding edge $\hat{e}$. Fixing an orientation on the plane,
the edge $\hat{e} $ inherits an orientation from the triangle
$\hat{v}$. The oriented edge defines an integral vector $x_{v,e}$ in
$\{l=0 \}$. The set $\{x_{v,e} \}$ satisfies the conditions of a
marking by construction.\QED

\begin{remark}
When we picture the web diagram $\Gamma $ in relation to the
triangulation $\hat{\Gamma }$, we will use an element of $SL_{2}
(\znums )$ to rotate the vectors $x_{v,e } $ counterclockwise by
ninety degrees so that the edges of $\Gamma $ are perpendicular to the
edges of $\hat{\Gamma }$. In Figure~\ref{fig: web diagrams}, we show
the web diagrams and the dual fan triangulation for (1) local $\P
^{1}\times \P ^{1}$, namely the total space of the canonical bundle
over $\P ^{1}\times \P ^{1}$ and, (2) local $\P ^{1}\times B\znums
_{2}$, namely the orbifold quotient of the resolved conifold $\O
(-1)\oplus \O (-1)\to \P ^{1}$ by $\znums _{2}$ acting fiberwise. Note
that the coarse space of local $\P ^{1}\times B\znums _{2}$ has a
transverse $A_{1}$ singularity and its unique crepant resolution is
given by local $\P ^{1}\times \P ^{1}$.
\end{remark}

\begin{remark}
The term web-diagram comes from physics (e.g. \cite{AKMV}). It is
essentially the same as the data determining a tropical plane curve
\cite{Mikhalkin}. The tropical curve associated to $\Gamma _{\X }$ may
be interpreted as the tropicalization of the curve mirror to $\X $
\cite[\S~4]{Gross2000}.
\end{remark}

\begin{remark}
The vertices of $\Gamma _{\X }$ correspond to torus fixed points in
$\X $, the edges correspond to torus invariant curves, and the regions
in the plane delineated by the graph correspond to torus invariant
divisors.
\end{remark}

\begin{figure}
\definecolor{gray}{rgb}{.5,.5,1}  

\setlength{\unitlength}{.7cm}
\begin{picture}(25,12) (3.5,0)
%
%
\color{gray}
\drawline (1,6) (6,1) (6,11) (1,6) (11,6)
\drawline (6,1) (11,6) (6,11)
\put(1 ,6 ){\circle*{.2}}
\put(6 ,6 ){\circle*{.2}}
\put(11 ,6 ){\circle*{.2}}
\put(6 ,11 ){\circle*{.2}}
\put(6 ,1 ){\circle*{.2}}
\put(-.5 ,6.1 ){$\scriptstyle{(1,-1,1)}$}
\put(6.1 ,6.1 ){$\scriptstyle{(1,0,1)}$}
\put(11.1 ,6.1 ){$\scriptstyle{(1,1,1)}$}
\put(6.1 ,11.1 ){$\scriptstyle{(1,0,2)}$}
\put(6.1 ,0.8 ){$\scriptstyle{(1,0,0)}$}
%
\color{black}
\put(4 ,4 ){\line(1 ,0 ){4}}
\put(4 ,4 ){\line(0 ,1 ){4}}
\put(4 ,4 ){\line(-1 ,-1 ){3}}
\put(8 ,8 ){\line(0 ,-1 ){4}}
\put(8 ,8 ){\line(-1 ,0 ){4}}
\put(8 ,8 ){\line(1 ,1 ){3}}
\put(4 ,8 ){\line(-1 ,1 ){3}}
\put(8 ,4 ){\line(1 ,-1 ){3}}
\thicklines
\put(4 ,4 ){\vector (1,0){1}}
\put(4 ,4 ){\vector (0,1){1}}
\put(4 ,4 ){\vector(-1 ,-1 ){1}}
\put(8 ,8 ){\vector(-1 ,0 ){1}}
\put(8 ,8 ){\vector(-1 ,0 ){1}}
\put(8 ,8 ){\vector(0 ,-1 ){1}}
\put(8 ,8 ){\vector(1 ,1 ){1}}
\put(4 ,8 ){\vector(-1 ,1 ){1}}
\put(4 ,8 ){\vector(1 ,0 ){1}}
\put(4 ,8 ){\vector(0 ,-1 ){1}}
\put(8 ,4 ){\vector(1 ,-1 ){1}}
\put(8 ,4 ){\vector(-1 ,0 ){1}}
\put(8 ,4 ){\vector(0 ,1 ){1}}
\thinlines
\put(8.2,4.5 ){$\scriptstyle{(0,1)}$}
\put(6.7,3.5 ){$\scriptstyle{(-1,0)}$}
\put(9.0,3.3 ){$\scriptstyle{(1,-1)}$}
\put(2.9,4.5 ){$\scriptstyle{(0,1)}$}
\put(4.1,3.5 ){$\scriptstyle{(1,0)}$}
\put(1.6,3.3 ){$\scriptstyle{(-1,-1)}$}
\put(2.7,7.3 ){$\scriptstyle{(0,-1)}$}
\put(4.1,8.3 ){$\scriptstyle{(1,0)}$}
\put(1.8,8.5 ){$\scriptstyle{(-1,1)}$}
\put(8.2,7.3 ){$\scriptstyle{(0,-1)}$}
\put(6.7,8.3 ){$\scriptstyle{(-1,0)}$}
\put(9.0,8.5 ){$\scriptstyle{(1,1)}$}
%
%

\put(20,6){\line(1,1){3}}
\put(20,6){\line(1,-1){3}}
\put(16,6){\line(1,0){4}}
\put(16,6){\line(-1,1){3}}
\put(16,6){\line(-1,-1){3}}
\thicklines
\put(20,6){\vector(1,1){1}}
\put(20,6){\vector(1,-1){1}}
\put(20,6){\vector(-1,0){2}}
\put(16,6){\vector(-1,-1){1}}
\put(16,6){\vector(-1,1){1}}
\put(16,6){\vector(1,0){2}}
\thinlines
\put(16.4,6.2){$\scriptstyle{(2,0)}$}
\put(18.6,6.2){$\scriptstyle{(-2,0)}$}
\put(19.8,7.0){$\scriptstyle{(1,1)}$}
\put(15.3,7.0){$\scriptstyle{(-1,1)}$}
\put(19.4,5){$\scriptstyle{(1,-1)}$}
\put(15.4,5){$\scriptstyle{(-1,-1)}$}
%
\color{gray}
\drawline(18,2)(18,10) (14,6) (18,2) (22,6) (18,10)
\put(18,2){\circle*{.2}}
\put(18,10){\circle*{.2}}
\put(14,6){\circle*{.2}}
\put(22,6){\circle*{.2}}
\put(18.2,1.8){$\scriptstyle{(1,0,0)}$}
\put(18.2,10){$\scriptstyle{(1,0,2)}$}
\put(12.6,6.1){$\scriptstyle{(1,-1,1)}$}
\put(22.1,6.1){$\scriptstyle{(1,1,1)}$}
\color{black}
\end{picture}
\caption{The web diagrams and fan triangulations for local $\P ^{1}\times \P ^{1}$ and local $\P ^{1}\times B\znums _{2}$}\label{fig: web diagrams}
\end{figure}
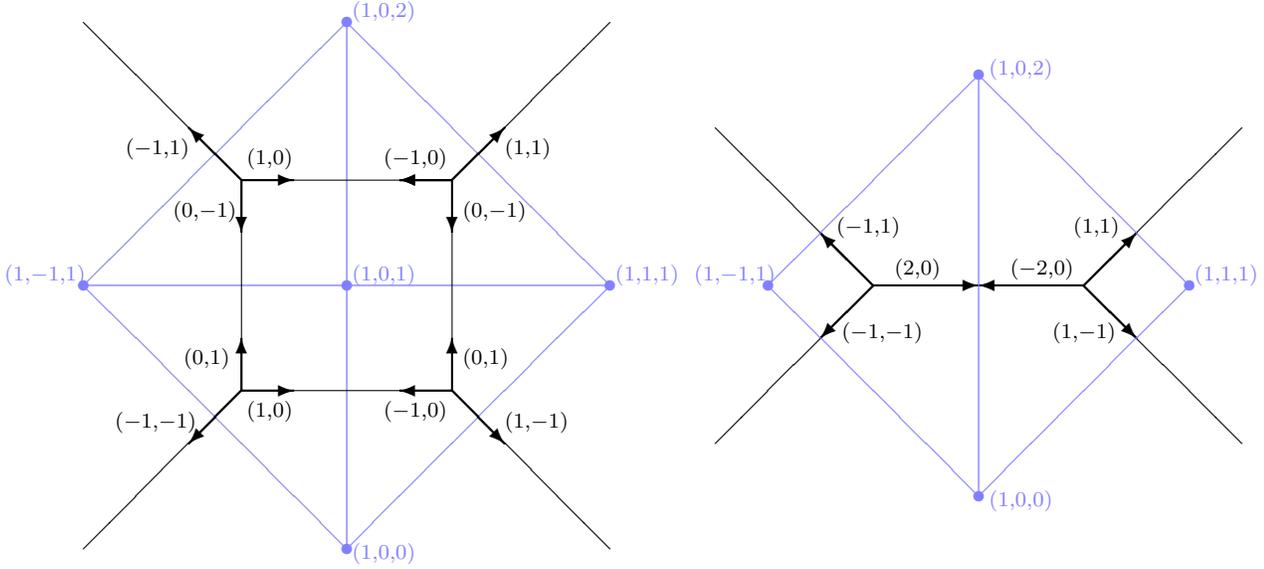

\bigskip

\subsection{Reading off the local model at a point from the web
diagram}

The local model for $\X $ at a torus fixed point is given as follows.
\begin{lemma}\label{lem: getting G from x1,x2,x3}
Let $v$ be the vertex of $\Gamma _{\X }$, let $(e_{1},e_{2},e_{3})$ be
the three edges incident to $v$, and let $x_{v,e_{i}}= (a_{i},b_{i})$
be the markings. Then $\X $ has an open neighborhood about the torus
fixed point corresponding to $v$ given by $[\cnums ^{3}/G]$ where $G$
is the subgroup of the torus $T=\left(\cnums ^{*} \right)^{3}$ given
by
\[
t_{1}t_{2}t_{3}=1, \quad t_{i}^{a_{j}}=t_{j}^{a_{i}}, \quad
t_{i}^{b_{j}}=t_{j}^{b_{i}}.
\]
The action of $G$ on $\cnums ^{3}$ is given by
\[
(z_{1},z_{2},z_{3})\mapsto (t_{1}z_{1},t_{2}z_{2},t_{3}z_{3})
\]
where the $z_{i}$ coordinate axis is the $T$ invariant curve
corresponding to the edge $e_{i}$.
\end{lemma}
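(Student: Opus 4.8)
The plan is to unwind the web-diagram/fan dictionary from the proof of Lemma~\ref{lem: a toric orbiCY3 corresponds to a web diagram} and then carry out a short toric computation. By that lemma, $v$ corresponds to a top-dimensional (necessarily simplicial) cone $\sigma$ of the fan $\Sigma$ of the coarse space $X$, equivalently to a triangle $\hat v$ of $\hat\Gamma=\Sigma\cap\{l=1\}$, where $l\colon N\to\znums$ is the \CY functional with $l(u)=1$ on every ray generator of $\Sigma$. Write $u_{1},u_{2},u_{3}\in N\cong\znums^{3}$ for the vertices of $\hat v$; they form a basis of $N\otimes\qnums$, and by planar duality the edge $e_{i}$ of $\Gamma$ incident to $v$ is dual to the edge of $\hat v$ opposite $u_{i}$, so that (after the $SL_{2}(\znums)$ rotation used to draw the web diagram, and up to the overall $SL_{2}(\znums)$ ambiguity of markings) $x_{v,e_{i}}=u_{j}-u_{k}$ in $\{l=0\}\cong\znums^{2}$ for the cyclic ordering $(i,j,k)$ of $(1,2,3)$; with this convention the vertex relation $x_{v,e_{1}}+x_{v,e_{2}}+x_{v,e_{3}}=0$ is automatic.

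I would then recall the local model. Put $N'=\znums u_{1}+\znums u_{2}+\znums u_{3}$, a finite-index sublattice of $N$. By the classification of smooth toric Deligne-Mumford stacks \cite{Borisov-Chen-Smith,Fantechi-Mann-Nironi}, the chart of $\X$ at the fixed point of $\sigma$ is $[\cnums^{3}/G]$, where $\cnums^{3}=\operatorname{Spec}\cnums[\sigma^{\vee}\cap M']$ has coordinate functions $z_{i}=\chi^{u_{i}^{*}}$ (with $u_{1}^{*},u_{2}^{*},u_{3}^{*}$ the basis of $M'=\Hom(N',\znums)$ dual to $u_{1},u_{2},u_{3}$), the torus $T'=\Hom(M',\cnums^{*})\cong(\cnums^{*})^{3}$ acts by $t\cdot(z_{1},z_{2},z_{3})=(t_{1}z_{1},t_{2}z_{2},t_{3}z_{3})$ with $t_{i}=t(u_{i}^{*})$, and $G=\Ker(T'\to T_{N})$ is the kernel of the map induced by $N'\hookrightarrow N$. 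Dually, $G=\{t\in T':t(m)=1\text{ for every }m\in M\}$, where $M=\Hom(N,\znums)$ is viewed inside $M'$ by restriction. Moreover the $z_{i}$-axis $\{z_{j}=z_{k}=0\}$ is the orbit closure of the $2$-dimensional face $\langle u_{j},u_{k}\rangle$ of $\sigma$, which is dual to the edge $e_{i}$; this gives the identification of the coordinate axes with the curves $C(e_{i})$ asserted in the lemma.

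To produce the equations, I would choose coordinates on $N\cong\znums^{3}$ so that $l$ is the third coordinate functional and write $u_{i}=(p_{i},q_{i},1)$. Then $M$ is generated as a group by the coordinate functionals $e_{1}^{*},e_{2}^{*},e_{3}^{*}=l$, and restricting to $N'$ and expanding in the dual basis gives $l=u_{1}^{*}+u_{2}^{*}+u_{3}^{*}$, $e_{1}^{*}=\sum_{i}p_{i}u_{i}^{*}$, $e_{2}^{*}=\sum_{i}q_{i}u_{i}^{*}$, so that
\[
G=\bigl\{(t_{1},t_{2},t_{3})\in(\cnums^{*})^{3} : t_{1}t_{2}t_{3}=1,\ t_{1}^{p_{1}}t_{2}^{p_{2}}t_{3}^{p_{3}}=1,\ t_{1}^{q_{1}}t_{2}^{q_{2}}t_{3}^{q_{3}}=1\bigr\}.
\]
Finally I would rewrite this in terms of $x_{v,e_{i}}=(a_{i},b_{i})$. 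With the cyclic orientation above one has $a_{i}=p_{j}-p_{k}$ and $b_{i}=q_{j}-q_{k}$, and a one-line manipulation using $t_{1}t_{2}t_{3}=1$ shows that each relation $t_{i}^{a_{j}}=t_{j}^{a_{i}}$ is equivalent to $t_{1}^{p_{1}}t_{2}^{p_{2}}t_{3}^{p_{3}}=1$ (for example $t_{1}^{a_{2}}=t_{2}^{a_{1}}\iff(t_{1}t_{2})^{p_{3}}=t_{1}^{p_{1}}t_{2}^{p_{2}}\iff t_{1}^{p_{1}}t_{2}^{p_{2}}t_{3}^{p_{3}}=1$), and likewise each $t_{i}^{b_{j}}=t_{j}^{b_{i}}$ is equivalent to $t_{1}^{q_{1}}t_{2}^{q_{2}}t_{3}^{q_{3}}=1$; this yields exactly the presentation of $G$ in the statement, and the action is already in the required form. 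Since a common $GL_{2}(\znums)$ change of the $x_{v,e_{i}}$ replaces this system by an equivalent one, the answer does not depend on the auxiliary choices.

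There is no genuinely hard step here; the point to be careful about is purely bookkeeping, namely keeping the duality conventions straight --- which edge of $\hat v$ is dual to which $e_{i}$, the orientations making $\sum_{i}x_{v,e_{i}}=0$ and the rotation convention of the remark after Lemma~\ref{lem: a toric orbiCY3 corresponds to a web diagram} mutually compatible, and the labeling of the $z_{i}$ matching the labeling of the $e_{i}$. Once these are pinned down, everything reduces to the elementary computation above.
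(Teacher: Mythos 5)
Your proposal is correct and takes the same route as the paper: the paper's entire proof is the one-line remark that the local model ``is easily read off from the fan'' (citing Borisov--Horja) and that the lemma follows by ``translating the fan data into the web diagram,'' which is precisely the translation you carry out. Your explicit computation --- writing the ray generators as $u_{i}=(p_{i},q_{i},1)$, identifying $G$ as the kernel of $T'\to T_{N}$ cut out by $l$, $e_{1}^{*}$, $e_{2}^{*}$, and checking that with $x_{v,e_{i}}=u_{j}-u_{k}$ the relations $t_{i}^{a_{j}}=t_{j}^{a_{i}}$, $t_{i}^{b_{j}}=t_{j}^{b_{i}}$ reproduce $t_{1}^{p_{1}}t_{2}^{p_{2}}t_{3}^{p_{3}}=1$ and $t_{1}^{q_{1}}t_{2}^{q_{2}}t_{3}^{q_{3}}=1$ modulo $t_{1}t_{2}t_{3}=1$ --- is accurate and simply supplies the details the paper delegates to the reference.
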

\textsc{Proof:} The local model is easily read off from the fan
(e.g. \cite[Eqn.~3]{Borisov-Horja}). The lemma is obtained by simply
translating the fan data into the web diagram.\QED

For
\[
x_{i}= (a_{i},b_{i})
\]
we define
\[
x_{i}\wedge x_{j} = a_{i}b_{j}-a_{j}b_{i}.
\]
We order the edges $(e_{1},e_{2}, e_{3}) $ cyclically in the
counterclockwise direction. Then it follows from the lemma that the
order of $G$ is given by:
\[
|G| = x_{1}\wedge x_{2} =x_{2}\wedge x_{3} =x_{3}\wedge x_{1} .
\]
Moreover, the order of $H_{i}$, the stabilizer group of a generic
point on the $T$ invariant curve corresponding to $e_{i}$ is given by
\[
|H_{i}| = \operatorname{div} (x_{i})
\]
where $\operatorname{div} (x_{i}) = \gcd (a_{i},b_{i})$ is the
divisibility of $x_{i}$.

\subsection{Reading off the local data at a curve from the web
diagram}

Let $e$ be a compact edge in the web diagram and let $C\subset \X$
be the corresponding torus invariant curve. By the
Fantechi-Mann-Nironi classification, $C  $ is given by an Abelian
gerbe over a football. There is a neighborhood of $C $ in $\X $
isomorphic to the total space of the normal bundle of $C $ in $\X
$. The normal bundle is the sum of two line bundles, so to specify the
neighborhood of $C $ we must determine the two normal bundles. In
the case where $C $ is a scheme, a line bundle is determined by its
degree. In general, the line bundles are determined by a slight
generalization of the numerical degree, and we explain below how to
extract this data from the web-diagram.

\begin{defn}\label{defn: degree for a line bundle on a gerbe over a football}
Let $\football $ be the stack obtained from $\P ^{1}$
by root constructions \cite{Cadman} of order $k_{0}$ and $k_{\infty }$
at the points $[0],[\infty ] \in \P ^{1}$ (the so-called
``football'').  Let
\[
\pi :C\to \football
\]
be a $\znums_{h}$ gerbe over the football $\football $ and let $L\to
C$ be a line bundle. We define the \emph{type} of $L$ to be the triple
of integers $(a_{0},a_{\infty },m)$ such that
\[
0\leq a_{0}<k_{0}, \quad 0\leq a_{\infty }<k_{\infty },
\]
and
\[
L^{\otimes h}\cong \pi ^{*}\O _{\football}
(a_{0}[0]+a_{\infty }[\infty ]+m[p])
\]
where $[p]\in \football $ is a generic point. $L$ is
determined up to isomorphism by its type
the \emph{degree} of $L$ to be
\[
\deg (L) = \frac{1}{h}\left(\frac{a_{0}}{k_{0}}+\frac{a_{\infty }}{k_{\infty }}+ m\right).
\]
\end{defn}

The web diagram of $\X $ near the edge $e$ is given by the following
diagram:

\setlength{\unitlength}{.7cm}
\begin{picture}(25,6) (10,3.5)
\thinlines
\put(22,6){\line(1,1){2}}
\put(22,6){\line(1,-1){2}}
\put(16,6){\line(1,0){6}}
\put(16,6){\line(-1,1){2}}
\put(16,6){\line(-1,-1){2}}
\thicklines
\put(22,6){\vector(1,1){1}}
\put(22,6){\vector(1,-1){1}}
\put(22,6){\vector(-1,0){1.5}}
\put(16,6){\vector(-1,-1){1}}
\put(16,6){\vector(-1,1){1}}
\put(16,6){\vector(1,0){1.5}}
\thinlines
\put(16.5,6.3){${x_{1}^{0}}$}
\put(20.6,6.3){${x_{1}^{\infty }}$}
\put(21.8,7.0){${x_{3}^{\infty }}$}
\put(15.3,7.0){$x_{2}^{0}$}
\put(21.4,5){$x_{2}^{\infty }$}
\put(15.4,5){$x_{3}^{0}$}
\put(13.5,6){$D_{0}$}
\put(23.5,6){$D_{\infty }$}
\put(18.5,7){$D$}
\put(18.5,4.5){$D'$}
\end{picture}

Since the divisibility of $x_{1}^{0}$ is $h$, we may use the action of
$SL_{2} (\znums )$ to set $x_{1}^{0}= (h,0)$ and thus $x_{1}^{\infty
}= (-h,0)$. Since the order of the local groups at $0$ and $\infty $
is $k_{0}h$ and $k_{\infty }h$ respectively, we know that $x_{3}^{0}$
and $x_{2}^{\infty }$ have the form
\[
x_{3}^{0} = (\tilde{a}_{0},-k_{0})\quad x_{2}^{\infty } = (-\tilde{a}_{\infty },-k_{\infty })
\]
for some integers $\tilde{a}_{0}$ and $\tilde{a}_{\infty }$. We define
$a_{0}$, $a_{\infty }$, and $m$ such that
\begin{align*}
a_{0}& = \tilde{a}_{0} \,\,  \mod k_{0} \quad \,\,
0\leq a_{0}<k_{0},\\
a_{\infty}& = \tilde{a}_{\infty} \mod k_{\infty} \quad 0\leq
a_{\infty}<k_{\infty},
\end{align*}
and
\[
m= \frac{\tilde{a}_{0}-a_{0}}{k_{0}}+\frac{\tilde{a}_{\infty
}-a_{\infty }}{k_{\infty }}.
\]

\begin{lemma}\label{lem: normal bundle degree from diagram}
The type of $\O _{C} (D)$ is given by $(a_{0},a_{\infty },m)$ and the
numerical degree of $\O _{C} (D)$ is given by
\[
\frac{1}{hk_{0}k_{\infty
}}x_{2}^{\infty }\wedge x_{3}^{0}.
\]
\end{lemma}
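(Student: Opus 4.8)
The plan is to read the type and degree of $\O_C(D)$ off the two local charts of $\X$ at the fixed points $p_0,p_\infty$ incident to $e$, together with the toric gluing between them.

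First I would invoke Lemma~\ref{lem: getting G from x1,x2,x3} at each of $p_0$ and $p_\infty$. After the $SL_2(\znums)$ normalization fixed before the statement (so $x_1^0=(h,0)$, $x_1^\infty=(-h,0)$, $x_3^0=(\tilde a_0,-k_0)$, $x_2^\infty=(-\tilde a_\infty,-k_\infty)$, with the remaining markings forced by the vertex relations $x_{v,e_1}+x_{v,e_2}+x_{v,e_3}=0$), this gives explicit presentations of the stabilizer groups $G_{p_0}$, $G_{p_\infty}$ and of their actions on $\cnums^3$. In the chart at $p_0$ the curve $C$ is the $z_1$-axis, the transverse divisor $D_0$ is $\{z_1=0\}$, and the two invariant divisors through $C$ are $\{z_2=0\}$ and $\{z_3=0\}$. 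Since $C=D\cap D'$ and $N_{C/\X}=\O_C(D)\oplus\O_C(D')$ with $\O_C(D)=N_{C/D'}$, matching against the orientation conventions of Figure~\ref{fig: edge e with f,f',g,g'} (which region is $D$ and which is $D'$) identifies $\O_C(D)$ with a coordinate line bundle, namely the $z_3$-direction at $p_0$ and, by the conventions at the other end, the $z_2$-direction at $p_\infty$.

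Next I would compute the local types. The generic stabilizer $\znums_h$ of $C$ embeds in $G_{p_0}$ as $\{(1,\zeta^{-1},\zeta):\zeta^h=1\}$ and acts on the relevant fibre through $\zeta^{\pm1}$, so $\O_C(D)^{\otimes h}$ is pulled back from the football; unwinding the relations that cut $G_{p_0}$ out of $(\cnums^\ast)^3$ shows that the $h$th power of the character on the fibre, viewed as a character of the quotient $G_{p_0}/\znums_h\cong\mu_{k_0}$ acting on the root-stack coordinate at $[0]$, is $\zeta\mapsto\zeta^{\tilde a_0}$. This identifies the local contribution of $\O_C(D)^{\otimes h}$ at $[0]$ with $\O(a_0[0])$, $a_0\equiv\tilde a_0\pmod{k_0}$, and the same computation at $p_\infty$ gives the contribution $\O(a_\infty[\infty])$ with $a_\infty\equiv\tilde a_\infty\pmod{k_\infty}$. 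It remains to pin down the integer $m$ with $\O_C(D)^{\otimes h}\cong\pi^\ast\O(a_0[0]+a_\infty[\infty]+m[p])$. I would do this either by computing the transition function of $\O_C(D)$ over the chart overlap $U_0\cap U_\infty$ directly from the toric Cartier data (as in \cite{Borisov-Horja}), or equivalently by computing the stacky intersection number $D\cdot C$ from the fan; both reduce, after translating through the $\{l=1\}$-triangulation dual to the web diagram (Appendix~\ref{app: web diagrams}), to the assertion $\deg\O_C(D)=\tfrac{1}{hk_0k_\infty}\,x_2^\infty\wedge x_3^0$, which combined with Definition~\ref{defn: degree for a line bundle on a gerbe over a football} and the values of $a_0,a_\infty$ forces $m=\tfrac{\tilde a_0-a_0}{k_0}+\tfrac{\tilde a_\infty-a_\infty}{k_\infty}$ — the integer defined before the statement. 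With this value of $m$ the first assertion holds; substituting it back into Definition~\ref{defn: degree for a line bundle on a gerbe over a football} gives $\deg\O_C(D)=\tfrac1h\big(\tfrac{\tilde a_0}{k_0}+\tfrac{\tilde a_\infty}{k_\infty}\big)$, and since $x_2^\infty\wedge x_3^0=(-\tilde a_\infty)(-k_0)-(\tilde a_0)(-k_\infty)=\tilde a_\infty k_0+\tilde a_0 k_\infty$, this equals $\tfrac1{hk_0k_\infty}\,x_2^\infty\wedge x_3^0$, proving the second assertion.

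The routine local character bookkeeping is not the difficulty; the crux is the computation of $m$, i.e.\ the global/gluing part of the type. Getting the transition function of $\O_C(D)$ over the chart overlap right (equivalently, the stacky intersection number) requires careful tracking of the toric gluing data, and the most error-prone point throughout is the sign bookkeeping attached to the orientation conventions — which region is $D$ versus $D'$, and how the markings $x_i^0,x_i^\infty$ are ordered around each vertex. Once those conventions are fixed, the argument is a direct, if slightly lengthy, unwinding.
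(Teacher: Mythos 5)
Your strategy is viable in outline but genuinely different from the paper's, and it has a gap at exactly the point you yourself flag as the crux. The paper does not argue chart by chart at all: it writes down the ray generators $(1,0,0)$, $(1,h,0)$, $(1,-\tilde{a}_{0},k_{0})$, $(1,-\tilde{a}_{\infty},-k_{\infty})$ for $D'$, $D$, $D_{0}$, $D_{\infty}$ and uses the linear functions given by the second and third coordinates of $N$ to produce, via \cite[Theorem~4.10]{Borisov-Horja}, the principal-divisor relations
\[
\O _{C}\bigl(hD-\tilde{a}_{0}D_{0}-\tilde{a}_{\infty }D_{\infty }\bigr)\cong \O _{C},
\qquad
\O _{C}\bigl(k_{0}D_{0}-k_{\infty }D_{\infty }\bigr)\cong \O _{C}.
\]
The first relation already says $\O _{C}(hD)\cong \pi ^{*}\O (\tilde{a}_{0}[0]+\tilde{a}_{\infty }[\infty ])$, and the second converts $\tilde{a}_{0}[0]+\tilde{a}_{\infty }[\infty ]$ into $a_{0}[0]+a_{\infty }[\infty ]+m[p]$ with precisely the integer $m$ defined before the lemma; the degree formula is then a two-line computation from Definition~\ref{defn: degree for a line bundle on a gerbe over a football}. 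In other words, the ``global/gluing part of the type'' that you defer comes for free from a single global linear relation on the fan, with no transition functions or intersection numbers.

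The gap: to determine $m$ you say that computing the transition function of $\O _{C}(D)$ over the chart overlap, or the stacky intersection number $D\cdot C$, ``reduces to the assertion $\deg \O _{C}(D)=\frac{1}{hk_{0}k_{\infty }}x_{2}^{\infty }\wedge x_{3}^{0}$'' --- but that assertion is the second claim of the lemma, and you carry out neither computation. As written, your argument establishes only the local coefficients $a_{0}$, $a_{\infty }$ and then \emph{assumes} the degree formula in order to pin down $m$, so neither claim of the lemma is actually proved; the logic would be fine only if the degree were computed independently first, and that is exactly the step left unexecuted. The repair is either to do the wall-crossing/intersection computation honestly, or --- more efficiently --- to replace the whole gluing discussion by the paper's observation that the second coordinate function on $N$ vanishes on the ray of $D'$ and on all rays of divisors disjoint from $C$, and takes values $h$, $-\tilde{a}_{0}$, $-\tilde{a}_{\infty }$ on the rays of $D$, $D_{0}$, $D_{\infty }$, yielding $hD\sim \tilde{a}_{0}D_{0}+\tilde{a}_{\infty }D_{\infty }$ on $C$, from which both the type and the degree follow at once. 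Your local character bookkeeping at $p_{0}$ and $p_{\infty }$ is consistent with this (it is the local shadow of that same relation), so nothing there needs to be discarded.
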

\textsc{Proof:} The generators of the one dimensional cones in the fan
of $\X$ corresponding to the divisors $D'$, $D$, $D_{0}$, and
$D_{\infty }$ can be taken to be $(1,0,0)$, $(1,h,0)$,
$(1,-\tilde{a}_{0},k_{0})$ and $(1,-\tilde{a}_{\infty },-k_{\infty })$
respectively (c.f. proof of Lemma~\ref{lem: a toric orbiCY3
corresponds to a web diagram}). Linear functions on the fan give rise
to relations among the divisors
\cite[Theorem~4.10]{Borisov-Horja}. The linear functions corresponding
to the second and third entries of the above vectors give rise to
relations which we restrict to $C$:
\begin{align*}
\O _{C} (hD-\tilde{a}_{0}D_{0}-\tilde{a}_{\infty }D_{\infty }) &\cong  \O _{C},\\
\O _{C} (k_{0}D_{0}-k_{\infty }D_{\infty }) &\cong  \O _{C}.
\end{align*}
Both relations pullback from $\football$ where the
second can be written
\[
\O _{\football } (k_{0}[0])\cong \O _{\football } (k_{\infty }[\infty ]) \cong \O _{\football } ([p]).
\]
Then the first assertion of the lemma, which is equivalent to
\[
\O _{C} (hD) = \pi ^{*}\O _{\football } (a_{0}[0]+a_{\infty }[\infty
]+m[p]),
\]
follows from the definitions and the above relations.

Computing the degree from the above relation, we get
\begin{align*}
\deg (\O _{C} (D)) &= \frac{1}{h}\left(\frac{a_{0}}{k_{0}} +\frac{a_{\infty }}{k_{\infty }} + m \right)\\
&=\frac{1}{h}\left(\frac{\tilde {a}_{0}}{k_{0}}+ \frac{\tilde {a}_{\infty }}{k_{\infty }} \right)\\
&=\frac{1}{hk_{0}k_{\infty }} (k_{\infty }\tilde {a}_{0}+k_{0}\tilde {a}_{\infty })\\
&=\frac{1}{hk_{0}k_{\infty }} (-\tilde {a}_{\infty },-k_{\infty })\wedge (\tilde {a}_{0},-k_{0})\\
&=\frac{1}{hk_{0}k_{\infty }}x_{2}^{\infty }\wedge x_{3}^{0}.
\end{align*}
\qed 

The \CY condition implies
\[
\O _{C} (D+D') \cong \pi ^{*}\O _{\football } (-[0]-[\infty ]).
\]
In terms of the corresponding types $(a_{0},a_{\infty },m)$ and
$(a'_{0},a'_{\infty },m')$, the condition is given by
\begin{align*}
a_{0}+a'_{0} &= -1 \mod k_{0},\\
a_{\infty }+a'_{\infty } &= -1 \mod k_{\infty },
\end{align*}
and
\[
\frac{a_{0}}{k_{0}}+ \frac{a_{\infty }}{k_{\infty }}+m +\frac{a'_{0}}{k_{0}}+ \frac{a'_{\infty }}{k_{\infty }}+m' =-\frac{1}{k_{0}}-\frac{1}{k_{\infty }}.
\]

\bibliographystyle{plain}
\bibliography{mainbiblio}
\end{document}